\documentclass{amsart}

\usepackage{amsmath,amssymb,amsfonts,amsthm}
\usepackage{mathrsfs}
\usepackage{graphicx}
\usepackage{multicol,multirow}
\usepackage{rotating}
\usepackage{appendix}
\usepackage{xcolor}
\usepackage[colorlinks=true,linkcolor=blue,citecolor=blue,urlcolor=blue]{hyperref}

\numberwithin{equation}{section}

\theoremstyle{plain}
\newtheorem{theorem}{Theorem}[section]
\newtheorem{lemma}[theorem]{Lemma}
\newtheorem{corollary}[theorem]{Corollary}
\newtheorem{proposition}[theorem]{Proposition}

\theoremstyle{definition}
\newtheorem{definition}[theorem]{Definition}

\theoremstyle{remark}

\def\bigno{\bigskip \noindent}
\def\medno{\medskip \noindent}
\def\smallno{\smallskip \noindent}
\def\bignobf#1{\bigskip \noindent \textbf{#1}}
\def\mednobf#1{\medskip \noindent \textbf{#1}}

\title[Two Regularity Problems on Analytic Tent Spaces]{Two Regularity Problems on Analytic Tent Spaces}
\thanks{X. Fang is supported by National Science and Technology Council (NSTC 114-2115-M-A49-003-MY3). S. Hou is supported by National Natural Science Foundation (NNSF) of China (No. 12371133). Q. Zhou is supported by National Natural Science Foundation (NNSF) of China (No. 12501162), China Postdoctoral Science Foundation (No. 2024M762280) and Natural Science Foundation of Jiangsu Province (No. BK20250832). X. Zhu is supported by National Natural Science Foundation (NNSF Tianyuan) of China (No. 12526614).}

\author[X. Fang]{Xiang Fang}
\address{National Yang Ming Chiao Tung University, Hsinchu, Taiwan (R.O.C.)}
\email{xfang@nycu.edu.tw}

\author[F. Guo]{Feng Guo}
\address{Nanjing University of Aeronautics and Astronautics, Nanjing, P. R. China}
\email{70207994@nuaa.edu.cn}

\author[S. Hou]{Shengzhao Hou}
\address{Soochow University, Suzhou, P. R. China}
\email{shou@suda.edu.cn}

\author[Q. Zhou]{Qi Zhou}
\address{Soochow University, Suzhou, P. R. China}
\email{zhouqi@suda.edu.cn}

\author[X. Zhu]{Xiaolin Zhu}
\address{Taiyuan Normal University, Taiyuan, P. R. China}
\email{XiaolinZhu@tynu.edu.cn}

\subjclass[2020]{47B38, 26A33, 30B20}
\keywords{Random analytic functions; analytic tent spaces; fractional integration; random symbol spaces; embedding problems.}

\begin{document}

\begin{abstract}
We study two regularity problems on Hardy-type analytic tent spaces $\mathcal{AT}^p_{q,\alpha}$ on the unit disk: fractional integration and randomization of Taylor coefficients. For fractional integration, we characterize completely the boundedness and compactness of the Hadamard, Flett, and Riemann--Liouville operators between analytic tent spaces, and obtain parallel results for analytic Triebel--Lizorkin spaces. In particular, the case $t=0$ yields a complete solution to the corresponding embedding problem for analytic tent spaces. For randomization, we characterize completely when the random Taylor series $\mathcal{R}f$ belongs almost surely to an analytic tent space whenever $f\in \mathcal{AT}^p_{q,\alpha}$, and we also obtain the Triebel--Lizorkin counterpart. As part of the proof, we identify the random symbol space associated with $\mathcal{AT}^p_{q,\alpha}$ and solve the embedding problem from analytic tent spaces into mixed norm spaces. These results extend classical theorems of Hardy--Littlewood and Littlewood, as well as their later analogues for Bergman and mixed norm spaces.
\end{abstract}

\maketitle

\section{Introduction and main results}

\noindent
Let $\mathbb{D}$ and $\mathbb{T}$ denote the unit disk and the unit circle, respectively, and let $|d\xi|$ be the Lebesgue measure on $\mathbb{T}$. For $\gamma>1$ and $\xi\in\mathbb{T}$, the Kor\'anyi (admissible, non-tangential) approach region is defined by
\[
\Gamma_\gamma(\xi)=\left\{z\in\mathbb{D}: |1-z\bar{\xi}|<(\gamma/2)(1-|z|^2)\right\}.
\]
Throughout the paper we write $\Gamma(\xi)=\Gamma_2(\xi)$.

\medskip
\noindent
Let $0<p,q<\infty$ and let $\nu$ be a positive Borel measure on $\mathbb{D}$. The tent space $\mathcal{T}_{q,\nu}^p$ consists of $\nu$-measurable functions $f$ such that
\[
\|f\|_{\mathcal{T}_{q,\nu}^p}^p
=
\int_{\mathbb{T}}
\left(
\int_{\Gamma(\xi)} |f(z)|^q\, d\nu(z)
\right)^{p/q}
|d\xi|
<\infty.
\]
When $d\nu(z)=(1-|z|^2)^\alpha dA(z)$, we write $\mathcal{T}_{q,\nu}^p=\mathcal{T}_{q,\alpha}^p$. The Hardy-type analytic tent space is
\[
\mathcal{A}\mathcal{T}_{q,\alpha}^p
=
\mathcal{T}_{q,\alpha}^p \cap H(\mathbb{D}).
\]
These spaces belong to the holomorphic tent-space framework stemming from the tent spaces of Coifman, Meyer, and Stein \cite{Coi1985} and the analytic setting introduced by Cohn and Verbitsky \cite{Cohn2000}; see also \cite{Aguilar2023,Aguilar20231,Pelaez20152,perala2018}. Associated with this scale, we also consider the analytic Triebel--Lizorkin spaces $\mathcal{F}_{q,\alpha}^p$, defined by
\[
\mathcal{F}_{q,\alpha}^p
=
\left\{
f\in H(\mathbb{D}) :
(1-|z|)^{s-\alpha}\mathfrak{I}_{-s}^{\mathrm{F}}f
\in
\mathcal{T}_{q,-2}^p
\text{ for some } s>\alpha
\right\},
\]
where, for
\[
f(z)=\sum_{n=0}^\infty a_n z^n,
\]
the Flett fractional operator is given by
\[
\mathfrak{I}_t^{\mathrm{F}}f(z)
=
\sum_{n=0}^\infty (n+1)^{-t} a_n z^n.
\]
In the present paper, we shall also consider two further types of fractional operator, namely the Hadamard and Riemann--Liouville fractional  operators; see \eqref{Hadamard} and \eqref{Riemann}. 

\medskip
\noindent
The purpose of this paper is to study two regularity problems on analytic tent spaces. The first concerns the action of fractional integration operators, and the second concerns the effect of randomizing the Taylor coefficients. Although these two problems come from different directions, they are linked by a common question: 

\mednobf{Question.} 
\noindent\textit{
Within the scale of analytic tent spaces, how much regularity can be gained under a canonical operation such as fractional integration or randomization?}

\medno Analytic tent spaces provide a natural setting for this unified point of view, sitting between Hardy-type boundary behavior and Bergman-type area integrability, and interacting naturally with analytic Triebel--Lizorkin spaces. From this perspective, the two parts of the paper may be regarded as deterministic and probabilistic aspects of the same regularity-improvement phenomenon.

\bigskip
\subsection{Fractional integration and differentiation.}
In this subsection we study the boundedness and compactness of fractional integration and differentiation on Hardy-type analytic tent spaces and the related analytic Triebel--Lizorkin spaces. For
\[
f(z)=\sum_{n=0}^\infty a_n z^n \in H(\mathbb{D}),
\]
we consider the three standard fractional operators
\begin{equation}\label{Hadamard}
\mathfrak{I}_t^{\mathrm{H}}f(z)=\sum_{n=1}^\infty n^{-t} a_n z^n,
\qquad
\mathfrak{I}_t^{\mathrm{F}}f(z)=\sum_{n=0}^\infty (n+1)^{-t} a_n z^n, 
\end{equation}
and
\begin{equation}\label{Riemann}
\mathfrak{I}_t^{\mathrm{RL}}f(z)=\sum_{n=0}^\infty \frac{\Gamma(n+1)}{\Gamma(n+1+t)}\,a_n z^n,
\qquad t\in\mathbb{C}\setminus\{-1,-2,\dots\}, 
\end{equation}
with
\[
\mathfrak{I}_{-m}^{\mathrm{RL}}f=f^{(m)}, \qquad m\in\mathbb{N}.
\]
Thus, the Hadamard, Flett, and Riemann--Liouville cases can be treated within a single framework. In this paper we characterize completely the septuple $$(p, v, q, u; \alpha, \beta; t)
		\in (0, \infty)^4  \times (-2, \infty)^2 \times \mathbb{C}$$ such that the fractional integration
		operator $\mathfrak{I}_t$, of order $t \in \mathbb{C}$, is bounded (resp. compact) between two Hardy-type analytic tent spaces: $\mathfrak{I}_t: \mathcal{A}\mathcal{T}_{q,\alpha}^{p} \to
		\mathcal{A}\mathcal{T}_{u,\beta}^{v}.$
		We treat three types of fractional integration for $\mathfrak{I}_t$: Hadamard, Flett, and Riemann-Liouville.
		The corresponding results for the Hardy spaces $H^p(\mathbb{D})$ and the Bergman spaces $L_a^p(\mathbb{D},dA_\alpha)$ are due to Hardy and Littlewood in 1932 and Buckley-Koskela-Vukoti\'{c} in 1999, respectively.
		Our main result extend these results to analytic tent spaces and Triebel-Lizorkin spaces. Even the case $t=0$ is new and yields a complete solution of the  associated embedding problem.

\medskip
\noindent
\textbf{Boundedness.} Fractional integration on analytic function spaces has its classical starting point in the theorem of Hardy and Littlewood, who characterized the boundedness of the Riemann--Liouville operator on Hardy spaces $H^p(\mathbb{D})$ \cite{HL}. Closely related coefficient multipliers of Hadamard and Flett type were introduced by Hadamard \cite{jh1892} and Flett \cite{Flett1971,Flett1}, and the corresponding Hardy-space boundedness theory was subsequently developed by Flett and Kim \cite{Flett2,Kim}. In the Bergman-space setting, Buckley, Koskela, and Vukoti\'c obtained the analogous boundedness results for fractional integration between weighted Bergman spaces \cite{Buck}.

\medskip
\noindent
We begin by studying these operators on the broader scale of Hardy-type analytic tent spaces. More precisely, we seek to characterize when a given operator $\mathfrak{I}_t$, chosen from the three classes above, acts boundedly from
\[
\mathcal{A}\mathcal{T}_{q,\alpha}^p
\quad\text{to}\quad
\mathcal{A}\mathcal{T}_{u,\beta}^v.
\]
This extends the classical Hardy- and Bergman-space theory to a substantially richer setting and yields a unified treatment of the Hadamard, Flett, and Riemann--Liouville operators in the tent-space context.

\medskip
\noindent
Our first main result gives a complete criterion for the boundedness of fractional integration between Hardy-type analytic tent spaces.

\begin{theorem}\label{main thm}
Let $0<p,q,u,v<\infty$, $\alpha,\beta>-2$ and $t\in\mathbb{C}$. Then
\[
\mathfrak{I}_{t}:\mathcal{A}\mathcal{T}_{q,\alpha}^{p}\to \mathcal{A}\mathcal{T}_{u,\beta}^{v}
\]
is bounded if and only if one of the following conditions holds:
\begin{enumerate}
\item[($\romannumeral1$)] $p<v$ and
\[
\Re e\, t\ge \frac{1}{p}-\frac{1}{v}+\frac{\alpha+2}{q}-\frac{\beta+2}{u};
\]
\item[($\romannumeral2$)] $p\ge v$ and either
\[
\Re e\, t>\frac{\alpha+2}{q}-\frac{\beta+2}{u},
\]
or
\[
\Re e\, t=\frac{\alpha+2}{q}-\frac{\beta+2}{u}
\quad\text{and}\quad q\le u.
\]
\end{enumerate}
Here $\mathfrak{I}_{t}$ is one of $\mathfrak{I}_{t}^{\mathrm{H}}$, $\mathfrak{I}_{t}^{\mathrm{F}}$ and $\mathfrak{I}_{t}^{\mathrm{RL}}$.
\end{theorem}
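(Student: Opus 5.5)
Our plan rests on two reductions. The first is that the three operators are interchangeable. For each $t$ the coefficient ratios $n^{-t}/(n+1)^{-t}$ and $\Gamma(n+1)(n+1)^{t}/\Gamma(n+1+t)$ have asymptotic expansions in powers of $1/n$. Therefore $\mathfrak{I}_t^{\mathrm H}$ and $\mathfrak{I}_t^{\mathrm{RL}}$ differ from $\mathfrak{I}_t^{\mathrm F}$ by composition with a coefficient multiplier of the form $1+\lambda_n$, where $\lambda_n$ is a smooth symbol of order $-1$ (on the range where the operators are defined), up to finitely many Taylor coefficients. Such a multiplier is an isomorphism on every $\mathcal{AT}^p_{q,\alpha}$ modulo finite rank. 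We will justify this with the standard smooth multiplier lemma: a symbol $\lambda(n)=\phi(1/n)$ with $\phi$ smooth near $0$ acts boundedly on Bergman-type integral means, hence on tent spaces through the area-function characterization. So it suffices to treat $\mathfrak{I}_t=\mathfrak{I}^{\mathrm F}_t$. The second reduction uses the Littlewood–Paley type equivalence
\[
\|f\|_{\mathcal{AT}^p_{q,\alpha}}\asymp \big\|(1-|z|)^{s}\,\mathfrak{I}^{\mathrm F}_{-s}f\big\|_{\mathcal{T}^p_{q,\alpha}},\qquad s>0
\]
(the analogue of the Triebel–Lizorkin definition). With it, $\mathfrak{I}_t$ with $\Re e\,t=\sigma$ maps $\mathcal{AT}^p_{q,\alpha}$ isomorphically onto $\mathcal{AT}^p_{q,\alpha+q\sigma}$ whenever $\alpha+q\sigma>-2$, and purely imaginary $t$ acts boundedly. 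The problem thereby reduces to the embedding question ($t=0$), namely when $\mathcal{AT}^p_{q,\alpha}\subset\mathcal{AT}^v_{u,\beta'}$ with $\beta'=\beta+u\,\Re e\,t$. The operator is bounded iff this embedding holds, and the stated conditions translate exactly into conditions on $(p,q,\alpha)$ versus $(v,u,\beta')$.

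For sufficiency of the embedding, we will use pointwise estimates. Analytic $f\in\mathcal{AT}^p_{q,\alpha}$ satisfies $|f(z)|\lesssim (1-|z|)^{-1/p-(\alpha+2)/q}\|f\|$, and more precisely $\sup_{\Gamma_\gamma(\xi)}|f|$ controls a nontangential maximal function lying in $L^p$. In case (ii) with strict inequality, we will split $\int_{\Gamma(\xi)}|f|^u dA_{\beta'}$ into the $q$-part and a geometric sum over dyadic layers; the surplus power $(1-|z|)^{\varepsilon}$ gives $\big(\int_\Gamma |f|^udA_{\beta'}\big)^{1/u}\lesssim N f(\xi)$ when $u$ is large, or use Hölder in the layers when $u<q$. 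Then $v\le p$ finishes the argument by Hölder on $\mathbb T$. In the equality case with $q\le u$, we will use the subharmonic estimate $|f(z)|^q\lesssim (1-|z|)^{-2-\alpha}\int_{\Gamma'(\xi)\cap D(z)}|f|^q dA_\alpha$ together with $\ell^q\subset\ell^u$ to get $A_{u,\beta'}f\lesssim A'_{q,\alpha}f$ pointwise on $\mathbb T$; aperture change then applies. In case (i), $p<v$, we will first embed $\mathcal{AT}^p_{q,\alpha}\subset \mathcal{AT}^{v}_{q,\alpha'}$ with $(\alpha'+2)/q=(\alpha+2)/q+1/p-1/v$. This is a Hardy–Littlewood–Sobolev type gain in the boundary exponent, obtained via atomic decomposition of tent spaces, or interpolation between the $H^p$ pointwise bound and the tent norm, mimicking the Hardy space proof for $p<v$ (the Hardy–Littlewood theorem). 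After that we apply case (ii) with equal outer exponents in the endpoint form. Here the endpoint for $p<v$ holds for all $q,u$, since the strict gain $1/p-1/v>0$ absorbs the $\ell^q\to\ell^u$ loss. This last point is the main obstacle: we expect to need a Marcinkiewicz-type argument using the weak bound on the distribution of $N f$ plus the sharp pointwise decay, as in Jarchow–Riedl / Peláez–Rättyä.

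For necessity, we will use test functions. Taking $f_a(z)=(1-\bar a z)^{-\gamma}$ with $\gamma$ large gives $\|f_a\|_{\mathcal{AT}^p_{q,\alpha}}\asymp(1-|a|)^{1/p+(\alpha+2)/q-\gamma}$, and comparing the two norms forces $\Re e\, t\ge \frac1p-\frac1v+\frac{\alpha+2}q-\frac{\beta+2}u$. Functions with lacunary Taylor series $\sum 2^{k\sigma}z^{2^k}$, whose tent norms are computable through Littlewood–Paley, rule out $\Re e\,t<\frac{\alpha+2}{q}-\frac{\beta+2}u$ when $p\ge v$. They also rule out equality with $q>u$: with $\sigma$ chosen at the critical exponent, the tent norm becomes a mixed $\ell^q$ versus $\ell^u$ norm of the coefficient sequence, and we pick $(c_k)\in\ell^q\setminus\ell^u$.
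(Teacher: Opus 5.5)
Your necessity arguments and your handling of case ($\romannumeral2$) are sound and close to the paper's: the test functions $(1-\bar a z)^{-\gamma}$, the lacunary series, and subharmonicity plus $\ell^q\subset\ell^u$ at the endpoint. Your weighted-sup bound even covers the strict case with $q>u$, which the paper leaves implicit.

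The genuine gap is the sufficiency in case ($\romannumeral1$). Your route $\mathcal{A}\mathcal{T}^p_{q,\alpha}\subset\mathcal{A}\mathcal{T}^v_{q,\alpha'}\subset\mathcal{A}\mathcal{T}^v_{u,\beta'}$ cannot work at the endpoint when $q>u$. The second inclusion has equal outer exponents, $(\alpha'+2)/q=(\beta'+2)/u$, and $q>u$. Part ($\romannumeral2$) of the theorem, via the very lacunary test you describe, shows that this inclusion is false. So the gain $1/p-1/v$ must be spent in the same step that changes the inner exponent. You recognize this as ``the main obstacle'' but only name a Marcinkiewicz-type argument. The first inclusion is also only asserted. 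As written, case ($\romannumeral1$) sufficiency is unproved for all $q,u$, and the proposed scheme is structurally wrong for $q>u$.

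The idea the paper uses is a duality argument on discrete tent spaces.
\begin{enumerate}
\item It discretizes on a lattice and raises to the power $1/b$ so that $bu,bv>1$.
\item It dualizes $T^{bv}_{bu}(Z)$ against $T^{(bv)'}_{(bu)'}(Z)$.
\item By H\"older, $\lambda_k|f(a_k)|^{1/b}(1-|a_k|)^{(\alpha+2)/(bq)}$ lies in a tent space whose outer exponent $s$ satisfies $1/s=1-1/(bv)+1/(bp)>1$, exactly because $p<v$.
\item For outer exponent $s<1$, Lemma \ref{duality norm} controls the pairing by $\sup_k|a_k|(1-|z_k|)^{1-1/s}$.
\end{enumerate}
That last bound does not involve the inner exponent at all, which is why no relation between $q$ and $u$ appears.

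If you prefer to keep your maximal-function idea, a Hedberg-type splitting also closes the gap. Put $G(z)=(1-|z|)^{(\alpha+2)/q}|f(z)|$ and $NG(\xi)=\sup_{\Gamma(\xi)}G$.
\begin{enumerate}
\item By subharmonicity, $NG$ is dominated by the $(q,\alpha)$-area function of a wider aperture, so $\|NG\|_{L^p}\lesssim\|f\|$.
\item Since $G(z)\le NG$ on an arc of length $\asymp 1-|z|$, we get $G(z)\lesssim(1-|z|)^{-1/p}\|f\|$.
\item Write the cone integral as $\int_{\Gamma(\xi)}G^u(1-|z|)^{\kappa-2}\,dA$ with $\kappa=u(1/p-1/v)>0$, and split it at height $h$.
\item The part near the boundary is $\lesssim NG(\xi)^u h^{\kappa}$, and the rest is $\lesssim\|f\|^u h^{-u/v}$.
\end{enumerate}
Optimizing in $h$ gives $\bigl(\int_{\Gamma(\xi)}|f|^u(1-|z|)^{\beta'}dA\bigr)^{1/u}\lesssim NG(\xi)^{p/v}\|f\|^{1-p/v}$ for all $q,u$. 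This integrates over $\mathbb{T}$ to the endpoint embedding and makes your first inclusion unnecessary.

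Two smaller points:
\begin{itemize}
\item $\mathfrak{I}_t$ with $\Re e\,t=\sigma$ maps onto $\mathcal{A}\mathcal{T}^p_{q,\alpha-q\sigma}$, not $\mathcal{A}\mathcal{T}^p_{q,\alpha+q\sigma}$. Your $\beta'=\beta+u\,\Re e\,t$ is nevertheless correct.
\item Boundedness of purely imaginary orders does not follow from multipliers $\phi(1/n)$ with $\phi$ smooth at $0$, because $(n+1)^{-i\sigma}$ oscillates. The paper needs Pavlovi\'c's estimate and the Ces\`aro maximal operator (Lemma \ref{invert}) for this.
\end{itemize}
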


\medskip
\noindent
Theorem \ref{main thm} allows the order $t$ to be complex. In particular, in the Riemann--Liouville case, negative integer orders recover the usual derivatives. Moreover, the criterion retains the familiar dichotomy between the cases $p<v$ and $p\ge v$, but now in the substantially richer setting of Hardy-type analytic tent spaces.

\medskip
\noindent
As an immediate consequence, setting $t=0$ yields a complete characterization of the embedding relation between Hardy-type analytic tent spaces. Previously, \cite{perala2018} established this embedding only in certain special cases, namely when $p\ge v$, $q\le u$, and
\[
\frac{\alpha+2}{q}=\frac{\beta+2}{u},
\]
and when $p<v$, $u=v$, and
\[
\frac{1}{p}-\frac{1}{v}+\frac{\alpha+2}{q}=\frac{\beta+2}{u}.
\]
The following corollary gives a complete answer for the full parameter range.

\begin{corollary}\label{tent embedding}
Let $0<p,q,u,v<\infty$ and $-2<\alpha,\beta<\infty$. Then
\[
\mathcal{A}\mathcal{T}_{q,\alpha}^{p}\subseteq \mathcal{A}\mathcal{T}_{u,\beta}^{v}
\]
if and only if one of the following conditions holds:
\begin{enumerate}
\item[($\romannumeral1$)] $p<v$ and
\[
\frac{1}{p}-\frac{1}{v}+\frac{\alpha+2}{q}-\frac{\beta+2}{u}\le 0;
\]
\item[($\romannumeral2$)] $p\ge v$ and either
\[
\frac{\alpha+2}{q}-\frac{\beta+2}{u}<0,
\]
or
\[
\frac{\alpha+2}{q}-\frac{\beta+2}{u}=0
\quad\text{and}\quad q\le u.
\]
\end{enumerate}
\end{corollary}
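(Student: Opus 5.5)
This follows from Theorem \ref{main thm} with $t=0$, once the set inclusion is identified with boundedness of the identity operator. I will use the Flett operator, because $\mathfrak{I}_0^{\mathrm{F}}f=f$ for every $f\in H(\mathbb{D})$. (The Riemann--Liouville operator would also work, since $\Gamma(n+1)/\Gamma(n+1)=1$. The Hadamard operator would not, since $\mathfrak{I}_0^{\mathrm{H}}f=f-f(0)$ drops the constant term.)

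\emph{Sufficiency.} Suppose the parameters satisfy (i) or (ii) of the corollary. With $\Re e\, t=0$, these are exactly conditions (i) or (ii) of Theorem \ref{main thm}. Hence $\mathfrak{I}_0^{\mathrm{F}}=\mathrm{Id}$ is bounded from $\mathcal{A}\mathcal{T}_{q,\alpha}^{p}$ to $\mathcal{A}\mathcal{T}_{u,\beta}^{v}$. In particular every $f\in\mathcal{A}\mathcal{T}_{q,\alpha}^{p}$ lies in $\mathcal{A}\mathcal{T}_{u,\beta}^{v}$, which is the inclusion.

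\emph{Necessity.} Assume $\mathcal{A}\mathcal{T}_{q,\alpha}^{p}\subseteq\mathcal{A}\mathcal{T}_{u,\beta}^{v}$. The goal is to show that the inclusion map is bounded; Theorem \ref{main thm} with $t=0$ then gives (i) or (ii). The main step is a closed graph argument for these quasi-Banach spaces.

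\begin{enumerate}
\item Both spaces are complete metrizable topological vector spaces, i.e.\ $F$-spaces, under $d(f,g)=\|f-g\|^{\min(p,q,1)}$. Here $\|\cdot\|$ denotes the respective tent quasi-norm, and the exponent $\min(p,q,1)$ is the one for $\mathcal{A}\mathcal{T}^p_{q,\alpha}$; it is replaced by $\min(v,u,1)$ for $\mathcal{A}\mathcal{T}^v_{u,\beta}$.
\item Norm convergence implies locally uniform convergence. For $|z|\le r<1$, the point $z$ lies in $\Gamma(\xi)$ for all $\xi$ in an arc of length comparable to $1-|z|$. Combined with subharmonicity of $|f|^q$ on a small disc around $z$, this gives
\[
|f(z)|\lesssim (1-|z|)^{-\frac1p-\frac{\alpha+2}{q}}\|f\|_{\mathcal{T}^p_{q,\alpha}}.
\]
The analogous estimate holds in the target space.
\item The inclusion has closed graph. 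Suppose $f_n\to f$ in the source and $f_n\to g$ in the target. By step 2 both convergences are pointwise, so $f=g$.
\item The closed graph theorem, which holds for $F$-spaces, now shows that the inclusion is bounded.
\end{enumerate}

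Applying Theorem \ref{main thm} to $\mathfrak{I}_0^{\mathrm{F}}$ then yields (i) or (ii) with $\Re e\, t=0$. The only nonformal ingredient is the point evaluation estimate in step 2, which is where I expect the real work to lie.
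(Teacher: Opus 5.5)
Your proposal is correct and is essentially the paper's own argument: the paper obtains Corollary \ref{tent embedding} simply by setting $t=0$ in Theorem \ref{main thm}. Your closed graph step makes explicit the passage from set inclusion to boundedness that the paper leaves implicit. That step could also be bypassed, since the necessity halves of the proof of Theorem \ref{main thm} use only membership of test functions, namely $(1-z)^{-\gamma}$ and lacunary series, and no operator bounds.
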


\medskip
\noindent
The same picture extends to analytic Triebel--Lizorkin spaces.

\begin{theorem}\label{mainthm3}
Let $0<p,q,u,v<\infty$, $\alpha,\beta\in\mathbb{R}$ and $t\in\mathbb{C}$. Then
\begin{equation}\label{Triebel bound formula}
\mathfrak{I}_{t}:\mathcal{F}_{q,\alpha}^{p}\to \mathcal{F}_{u,\beta}^{v}
\end{equation}
is bounded if and only if one of the following conditions holds:
\begin{enumerate}
\item[($\romannumeral1$)] $p<v$ and
\[
\Re e\, t\ge \frac{1}{p}-\frac{1}{v}-\alpha+\beta;
\]
\item[($\romannumeral2$)] $p\ge v$ and either
\[
\Re e\, t>-\alpha+\beta,
\]
or
\[
\Re e\, t=-\alpha+\beta
\quad\text{and}\quad q\le u.
\]
\end{enumerate}
Here $\mathfrak{I}_{t}$ is one of $\mathfrak{I}_{t}^{\mathrm{H}}$, $\mathfrak{I}_{t}^{\mathrm{F}}$ and $\mathfrak{I}_{t}^{\mathrm{RL}}$.
\end{theorem}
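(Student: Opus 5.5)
The plan is to reduce Theorem \ref{mainthm3} to Theorem \ref{main thm} through the isomorphism built into the definition of $\mathcal{F}^p_{q,\alpha}$. Fix $s>\alpha$. The definition says $f\in\mathcal{F}^p_{q,\alpha}$ exactly when $\mathfrak{I}^{\mathrm F}_{-s}f\in\mathcal{AT}^p_{q,q(s-\alpha)-2}$, because
\[
\int_{\Gamma(\xi)}|(1-|z|)^{s-\alpha}g|^q\,dA_{-2}=\int_{\Gamma(\xi)}|g|^q(1-|z|)^{q(s-\alpha)-2}\,dA
\]
up to constants. The weight $\alpha'=q(s-\alpha)-2$ lies in $(-2,\infty)$ precisely because $s>\alpha$, and then
\[
\frac{\alpha'+2}{q}=s-\alpha .
\]

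The first step is to check that membership does not depend on $s$. I will take this as the standard fact, which follows from Theorem \ref{main thm} with $p=v$, $q=u$ and the Flett operator: $\mathfrak{I}^{\mathrm F}_{s'-s}$ maps $\mathcal{AT}^p_{q,q(s-\alpha)-2}$ boundedly onto $\mathcal{AT}^p_{q,q(s'-\alpha)-2}$. Here the critical exponent equals $s'-s$ and $q\le q$, so equality is allowed, and the operator is bounded in both directions. So I may norm $\mathcal{F}^p_{q,\alpha}$ by
\[
\|f\|=\|\mathfrak{I}^{\mathrm F}_{-s}f\|_{\mathcal{AT}^p_{q,q(s-\alpha)-2}}.
\]
In the same way I will norm $\mathcal{F}^v_{u,\beta}$ with some $r>\beta$, using the weight $\beta'=u(r-\beta)-2$.

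The second step is to conjugate. We have
\[
\mathfrak{I}_t:\mathcal{F}^p_{q,\alpha}\to\mathcal{F}^v_{u,\beta}\ \text{bounded}
\iff
\mathfrak{I}^{\mathrm F}_{-r}\,\mathfrak{I}_t\,\mathfrak{I}^{\mathrm F}_{s}:\mathcal{AT}^p_{q,\alpha'}\to\mathcal{AT}^v_{u,\beta'}\ \text{bounded}.
\]
The composite is a coefficient multiplier with symbol $(n+1)^{r-s}\lambda_t(n)$, where $\lambda_t(n)$ is $n^{-t}$, $(n+1)^{-t}$ or $\Gamma(n+1)/\Gamma(n+1+t)$. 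In the Flett case this is exactly $\mathfrak{I}^{\mathrm F}_{t+s-r}$. In the other two cases it differs from $\mathfrak{I}^{\mathrm F}_{t+s-r}$ only by a multiplier $m(n)$ that has an asymptotic expansion $1+\sum_k c_k n^{-k}$, apart from finitely many coefficients (the $n=0$ term and the poles of $\Gamma$). I will handle this either by citing the lemma the paper surely uses to prove Theorem \ref{main thm} in all three cases (that such multipliers are bounded and invertible on each $\mathcal{AT}$ space, modulo finite rank), or, more simply, by observing that Theorem \ref{main thm} already asserts the same criterion for all three operators. On that reading, $\mathfrak{I}_t\circ\mathfrak{I}^{\mathrm F}_{s-r}$ has the same boundedness as $\mathfrak{I}_{t+s-r}$ of the same type. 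Finite-rank perturbations on polynomials are harmless, since polynomials lie in every $\mathcal{F}$ space.

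The third step applies Theorem \ref{main thm} with order $t+s-r$. The threshold becomes
\[
\frac{\alpha'+2}{q}-\frac{\beta'+2}{u}=(s-\alpha)-(r-\beta).
\]
So the condition $\Re e\,(t+s-r)\ \ge$ (or $>$) the threshold becomes $\Re e\,t\ \ge$ (or $>$) $\frac1p-\frac1v-\alpha+\beta$ in case (i), and $-\alpha+\beta$ in case (ii). The equality clause $q\le u$ carries over unchanged. This gives exactly the conditions of Theorem \ref{mainthm3}.

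The main obstacle is justifying the composition in the Hadamard and Riemann--Liouville cases. The issue is to show that $\mathfrak{I}^{\mathrm F}_{-r}\mathfrak{I}_t\mathfrak{I}^{\mathrm F}_{s}$ and $\mathfrak{I}_{t+s-r}$ differ by a bounded invertible multiplier on tent spaces, without depending on a particular type identity. I would first try to show that multipliers with smooth symbols $m(n)\sim1$ satisfying Mikhlin-type difference bounds are bounded on $\mathcal{AT}^p_{q,\alpha}$. The route is through a kernel estimate on the associated integral operator, combined with the known boundedness of Bergman-type projections on tent spaces. The remaining bookkeeping is routine.
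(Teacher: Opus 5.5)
Your reduction is the paper's own. The paper fixes a single $s>\max\{\alpha,\beta\}$, factors $\mathfrak{I}^{\mathrm F}_t=\mathfrak{I}^{\mathrm F}_{s}\circ\mathfrak{I}^{\mathrm F}_t\circ\mathfrak{I}^{\mathrm F}_{-s}$ through $\mathcal{AT}^p_{q,q(s-\alpha)-2}\to\mathcal{AT}^v_{u,u(s-\beta)-2}$, and then applies Theorem~\ref{main thm} together with Lemma~\ref{equavilence}. That lemma is exactly the equivalence across the three operator types that you anticipated.

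Your ``main obstacle'' only arises because you use two different auxiliary parameters $s\neq r$. Once your first step shows the space does not depend on the parameter, take $r=s$. Coefficient multipliers commute, so $\mathfrak{I}^{\mathrm F}_{-s}\mathfrak{I}_t\mathfrak{I}^{\mathrm F}_{s}=\mathfrak{I}_t$ exactly for the Hadamard, Flett and Riemann--Liouville types alike. Theorem~\ref{main thm} then applies verbatim, and no Mikhlin-type multiplier theory is needed.

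A minor slip: in your first step, the operator sending $\mathfrak{I}^{\mathrm F}_{-s}f$ to $\mathfrak{I}^{\mathrm F}_{-s'}f$ is $\mathfrak{I}^{\mathrm F}_{s-s'}$, and the critical exponent is $s-s'$, not $s'-s$.
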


\medskip
\noindent
Thus we obtain a unified tent-space/Triebel--Lizorkin version of the Hardy--Littlewood theory. In particular, since $\mathcal{F}_{2,0}^{p}=H^{p}(\mathbb{D})$ and
\[
\mathcal{F}_{p,-\frac{\alpha+1}{p}}^{p}
=
L_{a}^{p}\left(\mathbb{D},dA_{\alpha}\right),
\]
Theorem \ref{mainthm3} contains both the classical Hardy-space results and the Bergman-space results as special cases.

\medskip
\noindent
Again, the case $t=0$ yields a complete embedding theorem for analytic Triebel--Lizorkin spaces.

\begin{corollary}\label{Triebel embedding}
Let $0<p,q,u,v<\infty$ and $\alpha,\beta\in\mathbb{R}$. Then
\[
\mathcal{F}_{q,\alpha}^{p}\subseteq \mathcal{F}_{u,\beta}^{v}
\]
if and only if one of the following conditions holds:
\begin{enumerate}
\item[($\romannumeral1$)] $p<v$ and
\[
\frac{1}{p}-\frac{1}{v}-\alpha+\beta\le 0;
\]
\item[($\romannumeral2$)] $p\ge v$ and either
\[
-\alpha+\beta<0,
\]
or
\[
-\alpha+\beta=0
\quad\text{and}\quad q\le u.
\]
\end{enumerate}
\end{corollary}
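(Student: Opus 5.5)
The plan is to derive Corollary \ref{Triebel embedding} directly from Theorem \ref{mainthm3} by specializing to $t=0$, exactly as Corollary \ref{tent embedding} follows from Theorem \ref{main thm}. The only point needing care is that inclusion of sets has to be identified with boundedness of the identity operator. The identity is not literally one of $\mathfrak{I}_0^{\mathrm{H}}$, $\mathfrak{I}_0^{\mathrm{F}}$, $\mathfrak{I}_0^{\mathrm{RL}}$ in every case, so I will use the Flett operator. Since $\mathfrak{I}_0^{\mathrm{F}}f=\sum_n a_nz^n=f$, it equals the identity on $H(\mathbb{D})$. (By contrast, $\mathfrak{I}^{\mathrm{H}}_0$ kills the constant term, and $\mathfrak{I}^{\mathrm{RL}}_0$ is also the identity.)

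First, if $\mathfrak{I}_0^{\mathrm{F}}:\mathcal{F}^p_{q,\alpha}\to\mathcal{F}^v_{u,\beta}$ is bounded, then every $f\in\mathcal{F}^p_{q,\alpha}$ satisfies $f=\mathfrak{I}^{\mathrm{F}}_0 f\in\mathcal{F}^v_{u,\beta}$, which gives the inclusion.

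Conversely, suppose $\mathcal{F}^p_{q,\alpha}\subseteq\mathcal{F}^v_{u,\beta}$. I will show the inclusion map is bounded by the closed graph theorem, in its version for complete metrizable topological vector spaces (F-spaces), since the exponents may be below $1$. Two facts are needed, both standard for these spaces.
\begin{enumerate}
\item[(a)] Each $\mathcal{F}^p_{q,\alpha}$, with its natural quasinorm $\|(1-|z|)^{s-\alpha}\mathfrak{I}^{\mathrm F}_{-s}f\|_{\mathcal{T}^p_{q,-2}}$ for a fixed admissible $s$, is complete.
\item[(b)] Norm convergence in $\mathcal{F}^p_{q,\alpha}$ implies locally uniform convergence in $\mathbb{D}$, which follows from pointwise evaluation estimates.
\end{enumerate}
Given (a) and (b), let $f_n\to f$ in $\mathcal{F}^p_{q,\alpha}$ with $f_n\to g$ in $\mathcal{F}^v_{u,\beta}$. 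Then both sequences converge pointwise, so $f=g$, the graph is closed, and the inclusion is bounded. If the paper establishes embeddings with explicit norm estimates, one could cite that instead.

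Applying Theorem \ref{mainthm3} with $t=0$, so that $\Re e\,t=0$, the boundedness conditions become exactly (i) $p<v$ and $\frac1p-\frac1v-\alpha+\beta\le0$, and (ii) $p\ge v$ and either $-\alpha+\beta<0$, or $-\alpha+\beta=0$ with $q\le u$. These are precisely the conditions in the corollary.

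The main obstacle is the closed-graph step, specifically verifying the pointwise evaluation bound in (b) for quasi-Banach Triebel--Lizorkin spaces. I would get it by bounding $|\mathfrak{I}^{\mathrm F}_{-s}f(z)|$ via subharmonicity over a small disc contained in $\Gamma(\xi)$ for $\xi$ in an arc of length about $1-|z|$, then inverting $\mathfrak{I}^{\mathrm F}_{-s}$ on compacta.
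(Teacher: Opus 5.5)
Your proposal is correct and is essentially the paper's argument: the corollary is obtained by setting $t=0$ in Theorem \ref{mainthm3}, where $\mathfrak{I}_0^{\mathrm{F}}=\mathfrak{I}_0^{\mathrm{RL}}$ is the identity. The closed-graph step you add to identify inclusion with boundedness is left implicit in the paper, and it is sound: completeness transfers from $\mathcal{A}\mathcal{T}^{p}_{q,q(s-\alpha)-2}$ through the bijection $\mathfrak{I}^{\mathrm{F}}_{-s}$, and point evaluations are bounded. It could even be bypassed, since the paper's necessity arguments only test whether specific functions belong to the target space.
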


\medskip
\noindent
To the best of our knowledge, no previous partial results seem to have been available for this embedding problem in the setting of analytic Triebel--Lizorkin spaces.

\bigskip
\noindent
\textbf{Compactness.} We next turn to compactness. In contrast with the boundedness theory, compactness questions for fractional integration on analytic function spaces have been studied less extensively in the literature. In the present setting, the compactness criteria turn out to be the strict counterparts of the boundedness criteria. Thus the tent-space and Triebel--Lizorkin framework also yields a natural analogue of Sobolev-type compact embedding phenomena in the analytic category.

\medskip
\noindent
Our next result gives a complete characterization of compact fractional integration between Hardy-type analytic tent spaces.

\begin{theorem}\label{mainthm2}
Let $0<p,q,u,v<\infty$, $\alpha,\beta>-2$ and $t\in\mathbb{C}$. Then
\[
\mathfrak{I}_{t}:\mathcal{A}\mathcal{T}_{q,\alpha}^{p}\to \mathcal{A}\mathcal{T}_{u,\beta}^{v}
\]
is compact if and only if one of the following conditions holds:
\begin{enumerate}
\item[($\romannumeral1$)] $p<v$ and
\[
\Re e\, t>\frac{1}{p}-\frac{1}{v}+\frac{\alpha+2}{q}-\frac{\beta+2}{u};
\]
\item[($\romannumeral2$)] $p\ge v$ and
\[
\Re e\, t>\frac{\alpha+2}{q}-\frac{\beta+2}{u}.
\]
\end{enumerate}
Here $\mathfrak{I}_{t}$ is one of $\mathfrak{I}_{t}^{\mathrm{H}}$, $\mathfrak{I}_{t}^{\mathrm{F}}$ and $\mathfrak{I}_{t}^{\mathrm{RL}}$.
\end{theorem}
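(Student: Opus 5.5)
Sufficiency rests on Theorem \ref{main thm}, and necessity on a weak-null test family. For sufficiency, suppose $\Re e\, t$ strictly exceeds the threshold $\tau$. Here $\tau=\frac1p-\frac1v+\frac{\alpha+2}{q}-\frac{\beta+2}{u}$ in case (i) and $\tau=\frac{\alpha+2}{q}-\frac{\beta+2}{u}$ in case (ii). Choose $\varepsilon>0$ with $\Re e\, t-\varepsilon>\tau$ and factor $\mathfrak{I}_t=\mathfrak{I}_{t-\varepsilon}\circ\mathfrak{I}_\varepsilon$. This is exact for Hadamard and Flett. For Riemann--Liouville it holds up to a coefficient multiplier $\Gamma(n+1+t-\varepsilon)\Gamma(n+1+\varepsilon)/(\Gamma(n+1+t)\Gamma(n+1))=1+O(1/n)$, which is bounded on the target by the multiplier machinery used for Theorem \ref{main thm}. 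By Theorem \ref{main thm}, $\mathfrak{I}_{t-\varepsilon}:\mathcal{A}\mathcal{T}^p_{q,\alpha}\to\mathcal{A}\mathcal{T}^v_{u,\beta}$ is bounded.

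It then suffices to show that $\mathfrak{I}_\varepsilon$ is compact on $\mathcal{A}\mathcal{T}^p_{q,\alpha}$. We use the standard criterion: it suffices that $\mathfrak{I}_\varepsilon f_k\to0$ in norm for every bounded sequence $f_k$ converging to $0$ locally uniformly. Split $\mathfrak{I}_\varepsilon f_k$ into the Taylor polynomial of degree $N$ and the tail. The polynomial part tends to zero by local uniform convergence, since the finitely many coefficients tend to $0$. For the tail, we want $\|\mathfrak{I}_\varepsilon(f-S_Nf)\|\lesssim N^{-\delta}\|f\|$ for some $\delta>0$. Alternatively, and more cleanly, we argue by slack in the exponent. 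Since $\mathfrak{I}_\varepsilon$ is bounded $\mathcal{A}\mathcal{T}^p_{q,\alpha}\to\mathcal{A}\mathcal{T}^p_{q,\alpha+\varepsilon' q}$ for small $\varepsilon'<\varepsilon$ (Theorem \ref{main thm} with room), $\mathfrak{I}_\varepsilon f_k$ stays bounded in a space with strictly smaller weight exponent. Hence its tent norm over the annulus $|z|>r$ is at most $(1-r)^{\varepsilon'}$ times a bounded quantity; on $|z|\le r$ we use uniform convergence. Doing the same splitting with the weight directly in the target also avoids the factorization, so I would present it in that form: pick $\beta'<\beta$ with $\Re e\,t$ still above the threshold for $\beta'$, get boundedness into $\mathcal{A}\mathcal{T}^v_{u,\beta'}$, and gain $(1-r)^{(\beta-\beta')/u}$ in the $\Gamma(\xi)\cap\{|z|>r\}$ part.

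For necessity, boundedness is required, so Theorem \ref{main thm} gives $\Re e\, t\ge\tau$. We must rule out the endpoint $\Re e\,t=\tau$. Take a normalized weak-null test family. In case (i) use $f_a(z)=(1-|a|^2)^{s-\frac1p-\frac{\alpha+2}{q}}(1-\bar a z)^{-s}$ with $s$ large, whose $\mathcal{A}\mathcal{T}^p_{q,\alpha}$ norm is comparable to $1$ (standard kernel estimate). These tend to $0$ locally uniformly as $|a|\to1$. Meanwhile $\mathfrak{I}_t f_a\approx(1-|a|^2)^{\cdots}(1-\bar az)^{-s+t}$ plus lower-order terms, and its target norm is comparable to $(1-|a|)^{\Re e\, t-\tau}=1$. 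This contradicts compactness. In case (ii), the endpoint with $q>u$ is already unbounded. For the endpoint with $q\le u$, use instead lacunary-type test functions $f_N(z)=N^{-\gamma}\sum_{k\ge N}\ldots$, or dilations of functions supported in Taylor blocks $\Delta_N$ of dyadic size $2^N$, normalized in the source. On such blocks $\mathfrak{I}_t$ acts like multiplication by $2^{-Nt}$. The norm of a block polynomial $P_N$ with $\|P_N\|_{H^\infty}$-type control in $\mathcal{A}\mathcal{T}^p_{q,\alpha}$ scales like $2^{-N(\alpha+2)/q}$, a $p$-independent scaling when the function is spread around the whole circle (e.g.\ $P_N(z)=z^{2^N}\times$ de la Vallée Poussin kernel rotated averages). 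So the ratio of norms is $2^{N(\frac{\alpha+2}{q}-\frac{\beta+2}{u}-\Re e t)}=1$, and the normalized blocks converge weakly to zero without their images tending to zero.

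The main obstacle is this last step: computing two-sided norm estimates of the block test functions in both tent spaces, uniformly in $N$, with the operator correctly replaced by the scalar $2^{-Nt}$. For this I would invoke the smooth multiplier estimates underlying Theorem \ref{main thm}, namely that $n^{-t}\chi(n/2^N)$-type multipliers act on dyadic blocks with norm $\asymp 2^{-N\Re e t}$.
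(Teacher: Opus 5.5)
Your sufficiency argument and your case (i) necessity argument are essentially the paper's. For sufficiency, the paper also uses slack in the exponent: it moves $\mathfrak{I}_t$ into the weight via Lemma~\ref{tent frac trans}, splits the radial integral at $s$ with $1-s<\varepsilon$, and applies Corollary~\ref{tent embedding} at an endpoint exponent. That is the same mechanism as your bounded map into $\mathcal{A}\mathcal{T}^v_{u,\beta'}$ with $\beta'<\beta$ together with the gain $(1-r)^{(\beta-\beta')/u}$. In case (i) the paper uses the same normalized kernels $(1-|a|^2)^{\theta}(1-\bar a z)^{-\theta-\frac1p-\frac{\alpha+2}{q}}$.

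The proposal does not close at the endpoint $\Re e\,t=\frac{\alpha+2}{q}-\frac{\beta+2}{u}$ with $q\le u$ in case (ii). You never fix a test function there. Instead you make the step depend on two-sided tent-space estimates for dyadic block polynomials and on a two-sided multiplier bound $\asymp 2^{-N\Re e\,t}$ on blocks. You do not prove these, and the paper does not provide them. As written, this is a gap.

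The missing observation is that no block machinery is needed: a single monomial is already a block whose modulus is radial, hence spread over the whole circle. Take $f_n(z)=n^{\frac{\alpha+2}{q}}z^n$. By rotation invariance the inner integral over $\Gamma(\xi)$ does not depend on $\xi$, and by Lemma~\ref{tent-radial-description}
\[
\|z^n\|_{\mathcal{A}\mathcal{T}^p_{q,\alpha}}^{q}\asymp\int_0^1 r^{nq}(1-r)^{\alpha+1}\,dr\asymp n^{-(\alpha+2)}
\]
for every $p$. So $\|f_n\|_{\mathcal{A}\mathcal{T}^p_{q,\alpha}}\asymp 1$, while $f_n\to0$ locally uniformly.

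Since $\mathfrak{I}_t z^n=\lambda_n z^n$ with $|\lambda_n|\asymp n^{-\Re e\,t}$, you get $\|\mathfrak{I}_t f_n\|_{\mathcal{A}\mathcal{T}^v_{u,\beta}}\asymp n^{\frac{\alpha+2}{q}-\frac{\beta+2}{u}-\Re e\,t}$, which is $\asymp 1$ at the endpoint. Any norm-convergent subsequence of $\mathfrak{I}_t f_n$ would have to converge to the local uniform limit $0$, a contradiction. This is exactly the paper's argument. It also works when $q>u$, so you need not appeal to unboundedness there.

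Finally, the spaces are only quasi-Banach when $\min\{p,q\}<1$. You should therefore phrase the necessity in terms of bounded, locally uniformly null sequences rather than ``weakly null'' ones; that is the property you actually use.
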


\medskip
\noindent
Thus, compared with Theorem \ref{main thm}, compactness is characterized by the strict version of the corresponding boundedness inequalities. In particular, the boundary cases that remain bounded are never compact.

\medskip
\noindent
The same phenomenon persists for analytic Triebel--Lizorkin spaces.

\begin{theorem}\label{mainthm4}
Let $0<p,q,u,v<\infty$, $\alpha,\beta\in\mathbb{R}$ and $t\in\mathbb{C}$. Then
\[
\mathfrak{I}_{t}:\mathcal{F}_{q,\alpha}^{p}\to \mathcal{F}_{u,\beta}^{v}
\]
is compact if and only if one of the following conditions holds:
\begin{enumerate}
\item[($\romannumeral1$)] $p<v$ and
\[
\Re e\, t>\frac{1}{p}-\frac{1}{v}-\alpha+\beta;
\]
\item[($\romannumeral2$)] $p\ge v$ and
\[
\Re e\, t>-\alpha+\beta.
\]
\end{enumerate}
Here $\mathfrak{I}_{t}$ is one of $\mathfrak{I}_{t}^{\mathrm{H}}$, $\mathfrak{I}_{t}^{\mathrm{F}}$ and $\mathfrak{I}_{t}^{\mathrm{RL}}$.
\end{theorem}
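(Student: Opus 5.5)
We prove Theorem \ref{mainthm4} by reducing it to Theorem \ref{mainthm2}, via the isomorphism between analytic Triebel--Lizorkin spaces and analytic tent spaces built into the definition of $\mathcal{F}^p_{q,\alpha}$.

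\textbf{Step 1: the lifting isomorphism.} Fix $s>\max(\alpha,\beta,\alpha+\Re e\,t,\ldots)$ large. By definition, $f\in\mathcal{F}^p_{q,\alpha}$ if and only if $\mathfrak{I}^{\mathrm{F}}_{-s}f\in\mathcal{AT}^p_{q,q(s-\alpha)-2}$, since
\[
\int_{\Gamma(\xi)}(1-|z|)^{q(s-\alpha)}|\mathfrak{I}_{-s}^{\mathrm F}f|^q(1-|z|^2)^{-2}dA
\]
is the $\mathcal{T}^p_{q,q(s-\alpha)-2}$ quantity. We also need independence of $s$, which should follow from Theorem \ref{main thm} applied with $t$ the difference of two lifting orders. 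Hence $\mathfrak{I}^{\mathrm F}_{-s}:\mathcal{F}^p_{q,\alpha}\to\mathcal{AT}^p_{q,\alpha'}$ is an isomorphism onto, with $\alpha'=q(s-\alpha)-2>-2$. Likewise $\mathfrak{I}^{\mathrm F}_{-s}:\mathcal{F}^v_{u,\beta}\to\mathcal{AT}^v_{u,\beta'}$ with $\beta'=u(s-\beta)-2$. We take the same $s$ on both sides.

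\textbf{Step 2: conjugation.} All three operators are coefficient multipliers, so they commute with $\mathfrak{I}^{\mathrm F}_{-s}$. Thus $\mathfrak{I}_t:\mathcal{F}^p_{q,\alpha}\to\mathcal{F}^v_{u,\beta}$ is compact if and only if
\[
\mathfrak{I}_t:\mathcal{AT}^p_{q,\alpha'}\to\mathcal{AT}^v_{u,\beta'}
\]
is compact, because composing with isomorphisms preserves compactness.

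One subtlety is that $\mathfrak{I}^{\mathrm F}_{-s}$ is not exactly a bijection on $H(\mathbb{D})$ in the Hadamard case, since the $n=0$ term is killed. That term is a rank-one piece, and finite-rank perturbations do not affect compactness, so it does no harm.

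\textbf{Step 3: translating the condition.} Theorem \ref{mainthm2} gives the criterion in terms of
\[
\frac{\alpha'+2}{q}-\frac{\beta'+2}{u}=(s-\alpha)-(s-\beta)=-\alpha+\beta.
\]
Substituting this into conditions (i) and (ii) of Theorem \ref{mainthm2} yields exactly conditions (i) and (ii) of Theorem \ref{mainthm4}.

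\textbf{Main obstacle.} The main obstacle is Step 1: the claim that $\mathfrak{I}^{\mathrm F}_{-s}$ is an isomorphism onto the tent space, with equivalent quasinorms and with the definition independent of $s$. Surjectivity is immediate, because $\mathfrak{I}^{\mathrm F}_{s}$ inverts $\mathfrak{I}^{\mathrm F}_{-s}$ on $H(\mathbb D)$. Independence of $s$ follows from Theorem \ref{main thm} in the boundary case $p=v$, $q=u$, with
\[
\Re e\,t=\frac{\alpha_1+2}{q}-\frac{\alpha_2+2}{q},
\]
where $\alpha_1,\alpha_2$ are the weights attached to two lifting orders. Boundedness holds in this case since $q\le u$ holds with equality, and it holds in both directions.
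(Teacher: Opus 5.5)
Your proposal is correct and follows the paper's own route: the paper proves this by the same factorization used for Theorem \ref{mainthm3}, namely $\mathfrak{I}_t=\mathfrak{I}^{\mathrm F}_{s}\circ\mathfrak{I}_t\circ\mathfrak{I}^{\mathrm F}_{-s}$ with the same $s$ on both sides, where the lifts are isomorphisms $\mathcal{F}^p_{q,\alpha}\to\mathcal{AT}^p_{q,q(s-\alpha)-2}$ and $\mathcal{AT}^v_{u,u(s-\beta)-2}\to\mathcal{F}^v_{u,\beta}$, and then reads off the criterion from Theorem \ref{mainthm2} via $\frac{\alpha'+2}{q}-\frac{\beta'+2}{u}=\beta-\alpha$. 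Your ``subtlety'' is a non-issue, though: the Flett lift $\mathfrak{I}^{\mathrm F}_{-s}$ is always a bijection of $H(\mathbb{D})$, and the fact that $\mathfrak{I}^{\mathrm H}_t$ kills constants does not affect the conjugation identity, which holds for any coefficient multiplier.
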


\medskip
\noindent
Theorem \ref{mainthm4} recovers, in particular, the compactness criteria for fractional integration between Hardy spaces and between weighted Bergman spaces. More generally, it also yields the compactness theory for fractional integration between Hardy and Bergman spaces in both directions. 

\medskip
\noindent
%
%

\subsection{Randomization, symbol spaces, and embeddings.}

\noindent
Now we come to the second part of this paper. A classical phenomenon in the theory of analytic series is that randomization of the Taylor coefficients may improve regularity almost surely. The prototype is Littlewood's theorem, which states that if $f\in H^2(\mathbb{D})$, then its Bernoulli randomization belongs almost surely to $H^p(\mathbb{D})$ for every $p>0$ \cite{Littlewood1930}. The same phenomenon is known for Steinhaus and Gaussian randomizations as well; see and \cite[p.~54]{Kahane1985}, \cite{Littlewood1926}, and \cite{Paley19302}. More recently, Cheng, Fang, and Liu \cite{IMRN} obtained a complete almost sure mapping theory for Bergman spaces and, more generally, for mixed norm spaces. In the present paper we study this problem on the broader scale of Hardy-type analytic tent spaces and the related analytic Triebel--Lizorkin spaces.

\medskip
\noindent
A random variable $X$ is called Bernoulli if
\[
\mathbb{P}(X=1)=\mathbb{P}(X=-1)=\frac12,
\]
Steinhaus if it is uniformly distributed on the unit circle, and by $N(0,1)$ we mean the law of a Gaussian variable with zero mean and unit variance. A standard random sequence $\{X_n\}_{n\ge0}$ refers to either a standard Bernoulli sequence, a standard Steinhaus sequence, or a standard Gaussian $N(0,1)$ sequence. For
\[
f(z)=\sum_{n=0}^{\infty} a_n z^n \in H(\mathbb{D}),
\]
we define its randomization by
\[
(\mathcal{R}f)(z)=\sum_{n=0}^{\infty} X_n a_n z^n.
\]
In this paper we obtain a complete characterization, in terms of the hexades
\[
(p,q,u,v,\alpha,\beta)\in(0,\infty)^4\times(-2,\infty)^2,
\]
of when randomization improves regularity almost surely on these scales of spaces. More precisely, our objective is to characterize when
\[
\mathcal{R}:\mathcal{A}\mathcal{T}_{q,\alpha}^{p}\hookrightarrow \mathcal{A}\mathcal{T}_{u,\beta}^{v},
\]
and likewise when
\[
\mathcal{R}:\mathcal{F}_{q,\alpha}^{p}\hookrightarrow \mathcal{F}_{u,\beta}^{v},
\]
where $\mathcal{R}:E\hookrightarrow F$ means that $\mathcal{R}f\in F$ almost surely for every $f\in E$.

\medskip
\noindent
Our main result for the probabilistic part is the following.

\begin{theorem}\label{mainthm}
Let $0<p,q,u,v<\infty$, $\alpha,\beta>-2$ and $\{X_n\}_{n\ge0}$ be a standard random sequence. Then
\[
\mathcal{R}:\mathcal{A}\mathcal{T}_{q,\alpha}^{p}\hookrightarrow \mathcal{A}\mathcal{T}_{u,\beta}^{v}
\]
if and only if one of the following conditions holds:
\begin{enumerate}
\item[($\romannumeral1$)] $0<p<2$, and
\begin{enumerate}
\item[($\romannumeral1$.1)] $p\le u$, and
\[
\frac1p+\frac{\alpha+2}{q}=\frac12+\frac{\beta+2}{u};
\]
\item[($\romannumeral1$.2)] 
\[
\frac1p+\frac{\alpha+2}{q}<\frac12+\frac{\beta+2}{u}.
\]
\end{enumerate}

\item[($\romannumeral2$)] $2\le p<\infty$, and
\begin{enumerate}
\item[($\romannumeral2$.1)] $u\ge \max\{2,q\}$, and
\[
\frac{\alpha+2}{q}=\frac{\beta+2}{u};
\]
\item[($\romannumeral2$.2)]
\[
\frac{\alpha+2}{q}<\frac{\beta+2}{u}.
\]
\end{enumerate}
\end{enumerate}
\end{theorem}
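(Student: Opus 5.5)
The plan is to reduce the almost sure mapping problem to a deterministic embedding, in the spirit of Cheng--Fang--Liu \cite{IMRN}. For a quasi-Banach space $Y$ of analytic functions, the \emph{random symbol space} is
\[
S(Y)=\{f\in H(\mathbb{D}):\mathcal{R}f\in Y \text{ a.s.}\}.
\]
By definition,
\[
\mathcal{R}:\mathcal{A}\mathcal{T}^p_{q,\alpha}\hookrightarrow \mathcal{A}\mathcal{T}^v_{u,\beta}
\quad\Longleftrightarrow\quad
\mathcal{A}\mathcal{T}^p_{q,\alpha}\subseteq S(\mathcal{A}\mathcal{T}^v_{u,\beta}).
\]
So the proof splits into two steps: (a) identify $S(\mathcal{A}\mathcal{T}^v_{u,\beta})$, and (b) characterize the embedding of $\mathcal{A}\mathcal{T}^p_{q,\alpha}$ into that symbol space.

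For step (a), a zero--one law together with Kahane--Khintchine inequalities in the Fubini form should show that $\mathcal{R}f\in\mathcal{A}\mathcal{T}^v_{u,\beta}$ a.s. if and only if $\mathbb{E}\|\mathcal{R}f\|^v<\infty$. These inequalities hold uniformly for Bernoulli, Steinhaus and Gaussian sequences. Applying Khintchine pointwise, once inside the $L^u(dA_\beta)$ integral over $\Gamma(\xi)$ and once in the outer $L^{v/u}$ norm, this should in turn be equivalent to
\[
\int_{\mathbb{T}}\Big(\int_{\Gamma(\xi)}\Big(\sum_n|a_n|^2|z|^{2n}\Big)^{u/2}dA_\beta(z)\Big)^{v/u}|d\xi|<\infty .
\]
The integrand is radial, so the inner integral does not depend on $\xi$. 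Since the slice of $\Gamma(\xi)$ at radius $r$ has length comparable to $1-r$, the inner integral is comparable to $\int_0^1 M_2(r,f)^u(1-r)^{\beta+1}dr$.

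The step I expect to need the most care is justifying the interchange of Khintchine with the two nested non-convex norms when $u$ or $v$ is below $1$. My first attempt would be to apply Kahane's inequality in $L^{v}(\Omega)$ and then Minkowski or Jensen in the appropriate direction, handling $v\ge u$ and $v<u$ separately. Granting this, the symbol space is the mixed norm space
\[
S(\mathcal{A}\mathcal{T}^v_{u,\beta})=H(2,u,\beta+1),
\]
and it is independent of $v$, which explains why $v$ does not appear in the theorem.

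For step (b), I would characterize $\mathcal{A}\mathcal{T}^p_{q,\alpha}\subseteq H(2,u,\beta+1)$, which is the tent-to-mixed-norm embedding announced in the abstract. Heuristically, $\mathcal{A}\mathcal{T}^p_{q,\alpha}$ is a Triebel--Lizorkin space of smoothness $-(\alpha+2)/q+1/p$, and $H(2,u,\beta+1)$ is a Besov space $B^{2}_{u}$ of smoothness $-(\beta+2)/u+1/2$.

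When $p<2$, I would use a Jawerth/Franke type embedding $F^p_q\subseteq B^2_p$ with smoothness loss $1/p-1/2$ and arbitrary $q$. In the tent setting, this can be proved through dyadic Whitney-box discretization and a good-$\lambda$ or nonincreasing-rearrangement argument on the circle. Combined with the monotonicity $B^2_p\subseteq B^2_u$ for $u\ge p$ and the strict-gain embeddings, this gives sufficiency in (i).

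When $p\ge2$, I would use $\mathcal{A}\mathcal{T}^p_{q,\alpha}\subseteq\mathcal{A}\mathcal{T}^2_{q,\alpha}$, which follows from Hölder on $\mathbb{T}$, followed by the embedding $F^2_q\subseteq B^2_{\max(2,q)}$. The latter comes from Minkowski's integral inequality when $q\le2$ and from the norm comparison of mixed norm and tent norms when $q\ge 2$. The strict inequalities follow in both cases by trading the smoothness surplus for any loss in the inner index. Corollary~\ref{tent embedding} can be used to move freely between tent spaces with equal differential dimension.

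For necessity, I would test the embedding on the following families:
\begin{itemize}
\item lacunary (Hadamard gap) series, for which all these norms reduce to weighted $\ell^p$, $\ell^q$, $\ell^u$ sums of coefficients; this forces the index inequalities and the conditions $u\ge q$ and $u\ge 2$ at equality when $p\ge 2$;
\item normalized kernels $(1-\bar a z)^{-\gamma}$, which force the exponent relation;
\item sums of kernels localized at many disjoint points on a single scale, which produce the $1/p-1/2$ gap and the condition $u\ge p$ when $p<2$.
\end{itemize}
The main obstacle I foresee is the endpoint case $p<2$, both the sufficiency (the Jawerth-type embedding in the tent setting) and the sharpness of the condition $u\ge p$.
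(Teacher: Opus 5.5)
Your reduction matches the paper's. The symbol space you compute (weight $(1-r)^{\beta+1}$, i.e.\ $H(2,u,\frac{\beta+2}{u})$ in the paper's normalization of $H(p,q,\alpha)$) is Theorem \ref{symbol theorem}. What remains is then the case $(2,u,\frac{\beta+2}{u})$ of Theorem \ref{embedding theorem}. For the direction ``$\mathcal{R}f\in\mathcal{A}\mathcal{T}^v_{u,\beta}$ a.s.\ $\Rightarrow$ finite moments'', a zero--one law alone does not suffice: you also need a.s.\ convergence of the partial sums and the $s$-Banach Fernique--Kahane principle, as in Proposition \ref{T:Equivalence}.

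Your sufficiency arguments are sound in outline. For $p<2$, a Jawerth-type embedding $\mathcal{A}\mathcal{T}^p_{q,\alpha}\subseteq H(2,p,\frac1p+\frac{\alpha+2}{q}-\frac12)$ is a legitimate alternative to the paper's route. The paper instead uses the power trick for $q=2k$, Lemma \ref{Fractional Calderon}, and the Hardy-to-mixed-norm theorem of \cite{Guo2}, followed by Corollary \ref{tent embedding}. For $p\ge 2$ you have the two cases swapped. When $q\le 2$, Minkowski gives the reverse inclusion, so you need Corollary \ref{tent embedding} to pass to $\mathcal{A}\mathcal{T}^2_{2,\alpha'}=H(2,2,\frac{\alpha+2}{q})$; Minkowski is what handles $q\ge 2$.

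The genuine gap is in the necessity of the fine-index conditions at the endpoint.

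\emph{Lacunary series do not give $u\ge 2$.} For $f=\sum a_nz^{2^n-1}$, Lemmas \ref{tent lacunary} and \ref{mix lacunary} reduce both quasinorms to the $\ell^q$ and $\ell^u$ norms of $\{2^{-n(\alpha+2)/q}a_n\}$, with no dependence on $p$. So gap series yield only $u\ge q$. For example, with $p=4$, $q=1$, $u=\frac32$ at the endpoint they give no contradiction, yet the theorem says $\mathcal{R}$ fails.

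\emph{Single-scale kernel sums do not give $u\ge p$ for $p<2$.} The index $u$ in $H(2,u,\cdot)$ only governs how contributions from different dyadic scales are summed. A function concentrated at one scale $1-|a|$ therefore sees all $u$ alike and only reproduces the exponent relation. You need test functions spread over many scales.

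\emph{How the paper handles these.} For $p\le 2$ the paper uses $f_{b,c}$ with $b=\frac1p+\frac{\alpha+2}{q}$. By Lemma \ref{Fbc tent} it lies in $\mathcal{A}\mathcal{T}^p_{q,\alpha}$ iff $c>\frac1p$. By Lemma \ref{Fbc mix}, at the endpoint it lies in $H(2,u,\frac{\beta+2}{u})$ iff $c>\frac1u$. This forces $u\ge p$, and $u\ge 2$ when $p=2$.

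For $p>2$ even $f_{b,c}$ is useless. At the endpoint one has $\frac1p+\frac{\alpha+2}{q}<\frac12+\frac{\beta+2}{u}$, so $f_{b,c}$ always lies in the target. The paper has no explicit counterexample here (Problem B) and argues indirectly:
\begin{enumerate}
\item by \cite[Theorem 6]{Guo2}, $\mathfrak{I}^{\mathrm{F}}_{-(\alpha+2)/2}:H^{2p/q}\to H(\frac4q,\frac{2u}{q},\frac{\alpha+2}{2})$ is unbounded when $u<2$;
\item Lemma \ref{Fractional Calderon} turns this into $\mathcal{A}\mathcal{T}^{2p/q}_{2,\alpha}\not\subseteq H(\frac4q,\frac{2u}{q},\frac{\alpha+2}{2})$;
\item the substitution $g=f^k$ handles $q=2/k$;
\item Corollary \ref{tent embedding} passes to general $q$.
\end{enumerate}
Your plan has no substitute for this step. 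So the necessity of $u\ge 2$ in (ii.1) when $q<2$ is unproved, and for $p>2$ this is the hardest part of the theorem.
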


\medskip
\noindent
The same picture extends to analytic Triebel--Lizorkin spaces.

\begin{corollary}\label{main 3}
Let $0<p,q,u,v<\infty$, $\alpha,\beta\in\mathbb{R}$ and $\{X_n\}_{n\ge0}$ be a standard random sequence. Then
\[
\mathcal{R}:\mathcal{F}_{q,\alpha}^{p}\hookrightarrow \mathcal{F}_{u,\beta}^{v}
\]
if and only if one of the following conditions holds:
\begin{enumerate}
\item[($\romannumeral1$)] $0<p<2$, and
\begin{enumerate}
\item[($\romannumeral1$.1)] $p\le u$, and
\[
\frac1p-\alpha=\frac12-\beta;
\]
\item[($\romannumeral1$.2)]
\[
\frac1p-\alpha<\frac12-\beta.
\]
\end{enumerate}

\item[($\romannumeral2$)] $2\le p<\infty$, and
\begin{enumerate}
\item[($\romannumeral2$.1)] $u\ge \max\{2,q\}$, and
\[
\alpha=\beta;
\]
\item[($\romannumeral2$.2)]
\[
\alpha>\beta.
\]
\end{enumerate}
\end{enumerate}
\end{corollary}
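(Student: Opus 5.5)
Corollary \ref{main 3} will be derived from Theorem \ref{mainthm} by transference: fractional differentiation identifies each Triebel--Lizorkin space with a Hardy-type analytic tent space, and this identification commutes with randomization. Fix $s$ large. By definition, $f\in\mathcal{F}^p_{q,\alpha}$ exactly when $(1-|z|)^{s-\alpha}\mathfrak{I}^{\mathrm{F}}_{-s}f\in\mathcal{T}^p_{q,-2}$. Choosing $s$ with $q(s-\alpha)-2>-2$, this says $\mathfrak{I}^{\mathrm{F}}_{-s}f\in\mathcal{AT}^p_{q,\alpha_1}$ with
\[
\alpha_1=q(s-\alpha)-2 ,\qquad\text{so that}\qquad \frac{\alpha_1+2}{q}=s-\alpha .
\]
Similarly, $g\in\mathcal{F}^v_{u,\beta}$ if and only if $\mathfrak{I}^{\mathrm{F}}_{-s}g\in\mathcal{AT}^v_{u,\beta_1}$ with $(\beta_1+2)/u=s-\beta$. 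I would first check, as a routine lemma (presumably available from the part of the paper treating Theorem \ref{mainthm3}), that membership does not depend on the admissible $s$. Then $\mathfrak{I}^{\mathrm{F}}_{-s}$ is a bijective isomorphism $\mathcal{F}^p_{q,\alpha}\to\mathcal{AT}^p_{q,\alpha_1}$, with inverse $\mathfrak{I}^{\mathrm{F}}_{s}$.

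The key observation is that both $\mathfrak{I}^{\mathrm{F}}_{\pm s}$ and $\mathcal{R}$ are coefficient multipliers, so they commute:
\[
\mathfrak{I}^{\mathrm{F}}_{-s}(\mathcal{R}f)=\mathcal{R}\bigl(\mathfrak{I}^{\mathrm{F}}_{-s}f\bigr)
\]
pathwise. Hence $\mathcal{R}f\in\mathcal{F}^v_{u,\beta}$ almost surely if and only if $\mathcal{R}(\mathfrak{I}^{\mathrm{F}}_{-s}f)\in\mathcal{AT}^v_{u,\beta_1}$ almost surely. Since $f\mapsto \mathfrak{I}^{\mathrm{F}}_{-s}f$ is onto $\mathcal{AT}^p_{q,\alpha_1}$, we get
\[
\mathcal{R}:\mathcal{F}^p_{q,\alpha}\hookrightarrow\mathcal{F}^v_{u,\beta}
\quad\Longleftrightarrow\quad
\mathcal{R}:\mathcal{AT}^p_{q,\alpha_1}\hookrightarrow\mathcal{AT}^v_{u,\beta_1}.
\]
One subtlety: a.s. convergence of the random series on $\mathbb{D}$ is automatic, because $\limsup|X_n|^{1/n}=1$ almost surely, so $\mathcal{R}f$ is a.s. holomorphic in $\mathbb{D}$.

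Now apply Theorem \ref{mainthm} with $(\alpha,\beta)$ replaced by $(\alpha_1,\beta_1)$, which lie in $(-2,\infty)$ by the choice of $s$. Substituting $(\alpha_1+2)/q=s-\alpha$ and $(\beta_1+2)/u=s-\beta$, the $s$ cancels:
\begin{itemize}
\item in case (i), $\frac1p+s-\alpha$ compared with $\frac12+s-\beta$ becomes $\frac1p-\alpha$ versus $\frac12-\beta$;
\item in case (ii), $s-\alpha$ versus $s-\beta$ becomes $\alpha=\beta$ or $\alpha>\beta$.
\end{itemize}
The side conditions $p\le u$ and $u\ge\max\{2,q\}$ do not involve the weights and carry over unchanged. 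This yields exactly the stated list.

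The main obstacle is the independence of $s$, together with the claim that $\mathfrak{I}^{\mathrm{F}}_{-s}$ maps $\mathcal{F}^p_{q,\alpha}$ \emph{onto} $\mathcal{AT}^p_{q,\alpha_1}$ rather than merely into it. Surjectivity is immediate: $\mathfrak{I}^{\mathrm{F}}_{s}$ inverts $\mathfrak{I}^{\mathrm{F}}_{-s}$ on $H(\mathbb{D})$, so any $h\in\mathcal{AT}^p_{q,\alpha_1}$ equals $\mathfrak{I}^{\mathrm{F}}_{-s}(\mathfrak{I}^{\mathrm{F}}_{s}h)$. For independence, I would invoke Theorem \ref{main thm} in the equality case $t=s'-s$, $p=v$, $q=u$. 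There $\Re t=(\alpha_1'+2)/q-(\alpha_1+2)/q$, and since $q\le q$ holds, the theorem gives boundedness of $\mathfrak{I}^{\mathrm{F}}_{s'-s}$ between the corresponding tent spaces in both directions. Hence any two admissible choices of $s$ give the same space.
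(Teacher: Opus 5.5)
Your proof is correct and follows essentially the paper's route: the paper derives Corollary \ref{main 3} from Theorem \ref{mainthm} through the factorization via $\mathfrak{I}^{\mathrm{F}}_{\mp s}$ used for Theorem \ref{mainthm3}, which identifies $\mathcal{F}^p_{q,\alpha}$ with $\mathcal{A}\mathcal{T}^p_{q,q(s-\alpha)-2}$, and the argument works because these coefficient multipliers commute with $\mathcal{R}$, so that $s$ cancels from the conditions exactly as you show. Your independence-of-$s$ step could be shortened by citing Lemma \ref{tent frac trans} directly instead of the equality case of Theorem \ref{main thm}, but either works.
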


\medskip
\noindent
Theorem \ref{mainthm} exhibits a sharp transition at $p=2$. It extends Littlewood's theorem and, at the same time, the Bergman-space and mixed-norm results of \cite{IMRN} to the substantially broader setting of Hardy-type analytic tent spaces. Corollary \ref{main 3} yields the corresponding extension to analytic Triebel--Lizorkin spaces. A central feature of our approach is that the randomization problem admits a deterministic reduction: it is driven by two ingredients, namely the identification of the relevant symbol spaces in Theorem \ref{symbol theorem} and a complete mixed-norm embedding theorem for analytic tent spaces in Theorem \ref{embedding theorem}. Both of these results are also of independent interest.

\medskip
\noindent
\textbf{Symbol spaces.} For a subspace $\mathcal{X}\subset H(\mathbb{D})$, define its \emph{symbol space} by
\[
\mathcal{X}_{\star}
=
\{f\in H(\mathbb{D}) : \mathbb{P}(\mathcal{R}f\in\mathcal{X})=1\}.
\]
By the Kolmogorov $0$-$1$ law \cite[Theorem 5.12, p.~86]{Cinlar}, one has
\[
\mathbb{P}(\mathcal{R}f\in\mathcal{X})\in\{0,1\}
\]
for every $f\in H(\mathbb{D})$ under mild assumptions on $\mathcal{X}$. Thus the notation $\mathcal{X}_{\star}$ is natural in the present setting. In particular, Littlewood's theorem may be reformulated as
\[
(H^p)_{\star}=H^2,
\qquad 0<p<\infty.
\]

\medskip
\noindent
Our first deterministic ingredient identifies the symbol space of $\mathcal{A}\mathcal{T}_{q,\alpha}^{p}$.

\begin{theorem}\label{symbol theorem}
Let $0<p,q<\infty$, $\alpha>-2$ and $\{X_n\}_{n\ge0}$ be a standard random sequence. Then
\[
(\mathcal{A}\mathcal{T}_{q,\alpha}^{p})_{\star}
=
H\!\left(2,q,\frac{\alpha+2}{q}\right).
\]
\end{theorem}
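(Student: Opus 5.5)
We first fix the meaning of the target space. We take $H(p,q,\gamma)$ to be the mixed norm space of analytic $f$ with
\[
\|f\|_{H(p,q,\gamma)}^q=\int_0^1 M_p(r,f)^q(1-r)^{q\gamma-1}\,dr<\infty,
\]
so that $H(2,q,\gamma)$ is defined through the integral means $M_2(r,f)^2=\sum|a_n|^2r^{2n}$. The plan is to reduce the almost sure membership $\mathcal{R}f\in\mathcal{AT}^p_{q,\alpha}$ to a deterministic square-function condition by means of Khintchine--Kahane inequalities. We then show that this condition, after a Fubini argument, is exactly membership in $H(2,q,(\alpha+2)/q)$.

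\textbf{Sufficiency.} Suppose $f\in H(2,q,(\alpha+2)/q)$. Take the $p$-th moment of $\|\mathcal{R}f\|$ and write $\sigma_f(z)=(\sum|a_n|^2|z|^{2n})^{1/2}=M_2(|z|,f)$. Khintchine's inequality applied pointwise in $z$ gives $\mathbb{E}|\mathcal{R}f(z)|^s\asymp \sigma_f(z)^s$ for every $s>0$. Kahane's inequality in the quasi-Banach space $\mathcal{T}^p_{q,\alpha}$ (the series $\sum X_na_nz^n$ is a Rademacher/Gaussian sum there) allows us to choose the exponent freely. This yields
\[
\mathbb{E}\|\mathcal{R}f\|^{p}_{\mathcal{T}^p_{q,\alpha}}\asymp\Big(\mathbb{E}\|\mathcal{R}f\|^{s}_{\mathcal{T}^p_{q,\alpha}}\Big)^{p/s}.
\]
We then use the Kahane--Khintchine equivalence in the lattice $\mathcal{T}^p_{q,\alpha}$ (Maurey's lattice version, valid in quasi-Banach lattices of finite cotype) to get $\mathbb{E}\|\mathcal{R}f\|\asymp\|\sigma_f\|_{\mathcal{T}^p_{q,\alpha}}$. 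Since $\sigma_f$ is radial, we have
\[
\|\sigma_f\|^p_{\mathcal{T}^p_{q,\alpha}}=\int_{\mathbb{T}}\Big(\int_{\Gamma(\xi)}M_2(|z|,f)^q(1-|z|)^\alpha dA\Big)^{p/q}|d\xi|.
\]
The slice of $\Gamma(\xi)$ at radius $r$ has arclength $\asymp 1-r$, and this is independent of $\xi$. Hence the inner integral equals $\asymp\int_0^1M_2(r,f)^q(1-r)^{\alpha+1}dr=\|f\|^q_{H(2,q,(\alpha+2)/q)}$, and the whole quantity is $\asymp\|f\|^p_{H(2,q,(\alpha+2)/q)}$. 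Therefore $\mathbb{E}\|\mathcal{R}f\|^p<\infty$, and $\mathcal{R}f\in\mathcal{AT}^p_{q,\alpha}$ almost surely. One technical point: the lattice Khintchine step needs finite cotype of $\mathcal{T}^p_{q}$ as a $q$-convexified $L^p(\ell^q)$-type lattice. For $p$ or $q<1$ we can instead pass to $|\cdot|^{r}$ with $r$ small, making the space $L^{p/r}(L^{q/r})$ with exponents at least $1$, and apply Khintchine there.

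\textbf{Necessity.} Suppose $\mathcal{R}f\in\mathcal{AT}^p_{q,\alpha}$ almost surely. In the Bernoulli and Steinhaus cases this is a Rademacher series in a quasi-Banach space that converges almost surely; for Gaussian variables we symmetrize. By Kahane's theorem (the Fernique/Kahane integrability for almost surely bounded vector-valued Rademacher series), $\mathbb{E}\|\mathcal{R}f\|^p<\infty$. One subtlety is that $\mathcal{R}f$ is not a priori a convergent series in the norm. We handle this by working with the dilations $f_\rho(z)=f(\rho z)$, where norm convergence is trivial. Dilation $z\mapsto\rho z$ is uniformly bounded on $\mathcal{AT}^p_{q,\alpha}$: this can be obtained from the nontangential maximal characterization, or more simply by using the subharmonicity estimate. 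Kahane's contraction principle together with the finite-dimensional approximations then gives
\[
\sup_\rho\mathbb{E}\|\mathcal{R}f_\rho\|^p<\infty.
\]
Applying the lower Khintchine bound from the previous paragraph to $f_\rho$ and letting $\rho\to1$ by monotone convergence gives $f\in H(2,q,(\alpha+2)/q)$.

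The main obstacle is this necessity step, namely passing from almost sure finiteness to uniform moment bounds for the truncations or dilations in a quasi-normed tent space. Our first attempt would use the symmetry of $\{X_n\}$ and the Lévy/Kahane contraction inequality $\mathbb{P}(\|\mathcal{R}f_\rho\|>\lambda)\le 2\,\mathbb{P}(\|\mathcal{R}f\|>c\lambda)$. Here $c$ is the bound of the dilation operator, justified via pointwise subordination of the tent functional.
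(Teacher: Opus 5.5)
Your argument is correct. Its core is the same as the paper's: pointwise Khintchine, then exchanging the expectation with the tent-space integrals. The mechanics differ in three places.

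\textbf{Reduction to a radial quantity.} The paper first converts the tent norm of $\mathcal{R}f$ itself into the radial quantity $\int_{\mathbb{T}}(\int_0^1|\mathcal{R}f(r\xi)|^q(1-r)^{\alpha+1}dr)^{p/q}|d\xi|$. It does this via Lemma~\ref{tent-radial-description}, which uses analyticity. You stay on the cone and only at the end use that the square function $\sigma_f(z)=M_2(|z|,f)$ is radial. Its cone integral can then be computed slice by slice. That step is elementary and needs no analytic input.

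\textbf{Exchanging expectation and integrals.} The paper gets both inequalities from scalar Khintchine plus Minkowski's integral inequality and Jensen's inequality. For $q\le p$, Minkowski gives the upper bound and Jensen the lower one; for $q>p$, Jensen gives the upper bound and reverse Minkowski the lower one. You invoke Kahane's inequality in a quasi-Banach space together with the Maurey--Khintchine lattice inequality. This is heavier but avoids the case split. Your fallback for $p$ or $q<1$ (passing to $|\cdot|^r$) does not work as stated, because $|\sum X_na_nz^n|^r$ is not a Rademacher sum. The clean justification is the paper's: fix $\xi$, apply Minkowski or Jensen in $L^q(\Gamma(\xi),(1-|z|)^\alpha dA)$ together with scalar Khintchine, and pass from partial sums to $\mathcal{R}f$ by Fatou.

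\textbf{Necessity.} The paper simply appeals to Proposition~\ref{T:Equivalence}. That rests on the a.s.\ norm convergence of partial sums (Fact 3) and the $p$-Banach Fernique--Kahane principle of \cite{IMRN}. Your dilation device replaces Fact 3 by the uniform boundedness of $f\mapsto f_\rho$, which follows, e.g., from Fact 2 and Banach--Steinhaus in F-spaces. The deterministic bound $\|(\mathcal{R}f)_\rho\|\le C\|\mathcal{R}f\|$ already gives $\mathbb{P}(\|\mathcal{R}f_\rho\|>\lambda)\le\mathbb{P}(\|\mathcal{R}f\|>\lambda/C)$, with no L\'evy factor. What turns this uniform tail bound into $\sup_\rho\mathbb{E}\|\mathcal{R}f_\rho\|^p<\infty$ is Kahane's tail-to-moment (Hoffmann-J{\o}rgensen) inequality for norm-convergent symmetric series in an $s$-normed space, not the contraction principle.
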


\medskip
\noindent
Here $H(p,q,\alpha)$ denotes the mixed norm space consisting of those $f\in H(\mathbb{D})$ such that
\[
\|f\|_{H(p,q,\alpha)}
=
\left(
\int_{0}^{1} M_p^q(f,r)\,(1-r)^{\alpha q-1}\,dr
\right)^{1/q}
<\infty.
\]
For background on mixed norm spaces we refer to \cite{Arevalo2015,Flett2,pm2019}.

\medskip
\noindent
Theorem \ref{symbol theorem} identifies the almost sure regularity threshold for randomization on Hardy-type analytic tent spaces. It is the tent-space analogue of Littlewood's theorem and also recovers the Bergman-space statement in \cite{IMRN}. Indeed, since
\[
L_a^p(dA)=\mathcal{A}\mathcal{T}_{p,-1}^{p},
\]
Theorem \ref{symbol theorem} yields
\[
\bigl(L_a^p(dA)\bigr)_{\star}
=
H\!\left(2,p,\frac1p\right),
\]
which is precisely the Bergman-space result of \cite{IMRN}.

\medskip
\noindent
The same point of view extends to analytic Triebel--Lizorkin spaces.

\begin{corollary}
Let $0<p,q<\infty$ and $\alpha\in\mathbb{R}$. Then
\[
(\mathcal{F}_{q,\alpha}^{p})_{\star}
=
H_{s}^{2,q,s-\alpha}.
\]
\end{corollary}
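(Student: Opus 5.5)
The plan is to reduce the statement to Theorem~\ref{symbol theorem} through the definition of $\mathcal{F}^p_{q,\alpha}$. The reduction works because the Flett operator $\mathfrak{I}^{\mathrm F}_{-s}$ is a diagonal coefficient multiplier, so it commutes with randomization. I read $H_s^{2,q,s-\alpha}$ as the space of $f\in H(\mathbb{D})$ with $\mathfrak{I}^{\mathrm F}_{-s}f\in H(2,q,s-\alpha)$ for some $s>\alpha$, parallel to the definition of $\mathcal{F}^p_{q,\alpha}$.

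\emph{Step 1: $\mathcal{F}^p_{q,\alpha}$ as a pullback of a tent space.} Fix $s>\alpha$ and put $g=\mathfrak{I}^{\mathrm F}_{-s}f$. Since $1-|z|\asymp 1-|z|^2$ on $\mathbb{D}$,
\[
\int_{\Gamma(\xi)}(1-|z|)^{(s-\alpha)q}|g(z)|^q\,\frac{dA(z)}{(1-|z|^2)^{2}}
\asymp
\int_{\Gamma(\xi)}|g(z)|^q(1-|z|^2)^{q(s-\alpha)-2}\,dA(z).
\]
Hence $(1-|z|)^{s-\alpha}g\in\mathcal{T}^p_{q,-2}$ if and only if $g\in\mathcal{AT}^p_{q,\alpha_s}$, where
\[
\alpha_s=q(s-\alpha)-2 .
\]
Because $s>\alpha$, we have $\alpha_s>-2$, so Theorem~\ref{symbol theorem} applies to $\mathcal{AT}^p_{q,\alpha_s}$.

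\emph{Step 2: independence of $s$.} I will use, or prove if it is not already established later in the paper, that the condition defining $\mathcal{F}^p_{q,\alpha}$ does not depend on the choice of $s>\alpha$. The same must hold for the condition $\mathfrak{I}^{\mathrm F}_{-s}f\in H(2,q,s-\alpha)$ defining $H_s^{2,q,s-\alpha}$.

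For the mixed-norm side this is the classical Flett/Hardy--Littlewood fact. Fractional differentiation of order $\sigma$ shifts the weight index of $H(2,q,\cdot)$ by exactly $\sigma$, so $\mathfrak{I}^{\mathrm F}_{s'-s}$ is an isomorphism from $H(2,q,s'-\alpha)$ onto $H(2,q,s-\alpha)$.

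For the tent side, the shift from $s$ to $s'$ is a Flett operator of order $s'-s$ between analytic tent spaces. One proves this either by the same multiplier/integral-representation argument used for Theorem~\ref{main thm}, or simply by citing the invariance that makes the definition of $\mathcal{F}^p_{q,\alpha}$ meaningful.

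This step is where I expect the main technical care. It also removes the measurability issue hidden in the phrase ``for some $s$'': once independence holds, the event $\{\mathcal{R}f\in\mathcal{F}^p_{q,\alpha}\}$ equals $\{\mathfrak{I}^{\mathrm F}_{-s}\mathcal{R}f\in\mathcal{AT}^p_{q,\alpha_s}\}$ for one fixed $s$. That is a tail event, to which the $0$-$1$ law applies as in the tent-space case.

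\emph{Step 3: commuting and concluding.} Write $f=\sum a_nz^n$. Then
\[
\mathfrak{I}^{\mathrm F}_{-s}(\mathcal{R}f)=\sum_n X_n (n+1)^{s}a_nz^n=\mathcal{R}(\mathfrak{I}^{\mathrm F}_{-s}f)=\mathcal{R}g .
\]
Combining this with Steps 1 and 2,
\[
\mathbb{P}\bigl(\mathcal{R}f\in\mathcal{F}^p_{q,\alpha}\bigr)=1
\iff
\mathbb{P}\bigl(\mathcal{R}g\in\mathcal{AT}^p_{q,\alpha_s}\bigr)=1 .
\]
By Theorem~\ref{symbol theorem}, the right-hand side holds if and only if
\[
g\in H\!\left(2,q,\tfrac{\alpha_s+2}{q}\right)=H(2,q,s-\alpha).
\]
That is precisely the condition $f\in H_s^{2,q,s-\alpha}$.
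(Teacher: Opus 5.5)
Your proposal is correct and matches the reduction the paper intends; the paper gives no separate proof of this corollary. As in the proof of Theorem~\ref{mainthm3}, $\mathfrak{I}^{\mathrm F}_{-s}$ identifies $\mathcal{F}^p_{q,\alpha}$ with $\mathcal{A}\mathcal{T}^p_{q,q(s-\alpha)-2}$ and commutes with $\mathcal{R}$, and Theorem~\ref{symbol theorem} then yields $H(2,q,s-\alpha)$. The tent-side independence of $s$ that you flag in Step~2 follows directly from Lemma~\ref{tent frac trans} with $t=s-s'$, so no new argument is needed there.
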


\medskip
\noindent
Here $H_{s}^{2,q,s-\alpha}$ denotes the Besov-type space of all $f\in H(\mathbb{D})$ such that
\[
\mathfrak{I}_{-s}^{\mathrm{F}}f\in H(2,q,s-\alpha),
\]
equipped with the norm
\[
\|f\|_{s}^{2,q,s-\alpha}
=
\|\mathfrak{I}_{-s}^{\mathrm{F}}f\|_{H(2,q,s-\alpha)}.
\]
This space is independent of the choice of $s$; see \cite[p.~188]{pm2019}. In particular, since $$\mathcal{F}_{2,0}^{p}=H^p(\mathbb{D}),$$ the above corollary recovers Littlewood's theorem.

\medno A natural question is whether Theorem \ref{symbol theorem}
	can be extended to cover the case $p=\infty$ or $q=\infty$. 
	If $q=\infty$, $p<\infty$ and $\alpha\in\mathbb{R}$, then by   the $L^p$-boundedness of the 
	admissible (non-tangential) maximal function (see \cite{Wang2020}), we have $$\mathcal{A}\mathcal{T}_{\infty,\alpha}^p=H^p.$$
	Hence,
	$$\left(\mathcal{A}\mathcal{T}_{\infty,\alpha}^p\right)_{\star}=H^2.$$ 
	The following appears to be a difficult (but highly captivating, at least to us) problem:
	
	\bignobf{Problem A.} Let $0<q<\infty$ and $\alpha\in \mathbb{R}$. Then, how to characterize 
	$(\mathcal{A}\mathcal{T}_{q, \alpha}^{\infty})_\star$  and $\left(\mathcal{F}_{q, \alpha}^{\infty}\right)_\star$? 
	
	\bigno In particular, one might observe that $\mathcal{F}_{2,0}^{\infty}=\mathrm{BMOA}$ \cite{pm2019}. 	
    
\medskip
\noindent
\textbf{Embedding.} We now turn to the second deterministic ingredient. By Theorem \ref{symbol theorem}, the almost sure mapping property
\[
\mathcal{R}:\mathcal{A}\mathcal{T}_{q,\alpha}^{p}\hookrightarrow \mathcal{A}\mathcal{T}_{u,\beta}^{v}
\]
is reduced to the inclusion
\[
\mathcal{A}\mathcal{T}_{q,\alpha}^{p}
\subset
H\!\left(2,u,\frac{\beta+2}{u}\right).
\]
More generally, this leads naturally to the problem of characterizing the embedding
\begin{equation}\label{E:embed}
\mathcal{A}\mathcal{T}_{q,\alpha}^{p}\subset H(u,v,\beta).
\end{equation}
Our second deterministic ingredient solves this embedding problem in full generality.

\begin{theorem}\label{embedding theorem}
Let $0<p,q,u,v<\infty$, $\alpha>-2$ and $\beta>0$. Then the embedding
\[
\mathcal{A}\mathcal{T}_{q,\alpha}^{p}\subset H(u,v,\beta)
\]
holds if and only if one of the following conditions is satisfied:
\begin{enumerate}
\item[($\romannumeral1$)] $0<p<u$, and
\begin{enumerate}
\item[($\romannumeral1$.1)] $p\leq v$, and
\[
\frac{1}{p}+\frac{\alpha+2}{q}=\frac{1}{u}+\beta;
\]
\item[($\romannumeral1$.2)]
\[
\frac{1}{p}+\frac{\alpha+2}{q}<\frac{1}{u}+\beta.
\]
\end{enumerate}

\item[($\romannumeral2$)] $p=u$, and
\begin{enumerate}
\item[($\romannumeral2$.1)] $v\geq \max\{p,q\}$, and
\[
\frac{\alpha+2}{q}=\beta;
\]
\item[($\romannumeral2$.2)]
\[
\frac{\alpha+2}{q}<\beta.
\]
\end{enumerate}

\item[($\romannumeral3$)] $p>u$, and
\begin{enumerate}
\item[($\romannumeral3$.1)] $v\geq \max\{u,q\}$, and
\[
\frac{\alpha+2}{q}=\beta;
\]
\item[($\romannumeral3$.2)]
\[
\frac{\alpha+2}{q}<\beta.
\]
\end{enumerate}
\end{enumerate}
\end{theorem}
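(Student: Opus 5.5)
The proof splits into a sufficiency part, obtained by factoring the embedding through simpler spaces, and a necessity part, obtained from explicit test functions.

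\emph{Sufficiency.} The basic tool is a pointwise/integral-means estimate for functions in $\mathcal{A}\mathcal{T}_{q,\alpha}^p$. Subharmonicity over a pseudo-hyperbolic disc $\Delta(z,\delta)$ gives
\[
|f(z)|^q\lesssim (1-|z|)^{-\alpha-2}\int_{\Delta(z,\delta)}|f|^q\,dA_\alpha .
\]
Since $\Delta(z,\delta)\subset\Gamma_\gamma(\xi)$ for all $\xi$ in an arc $I_z$ with $|I_z|\asymp 1-|z|$, averaging over $I_z$ yields
\[
M_p^p(f,r)\lesssim (1-r)^{-p(\alpha+2)/q}\,\|f\|^p_{\mathcal{A}\mathcal{T}^p_{q,\alpha}}
\]
after integrating in $\xi$. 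Combined with the Hardy--Littlewood growth $M_u(f,r)\lesssim (1-r)^{-(1/p-1/u)}M_p(f,\cdot)$ for $p<u$, this gives
\[
M_u(f,r)\lesssim (1-r)^{-(1/p-1/u)-(\alpha+2)/q}\|f\|.
\]
This bound already settles all strict-inequality cases ($\romannumeral1$.2), ($\romannumeral2$.2) and ($\romannumeral3$.2). For ($\romannumeral3$.2) one uses $M_u\le M_p$, and the integral $\int_0^1 (1-r)^{(\beta-(\alpha+2)/q)v-1}dr$ converges.

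For the endpoint cases we factor through known embeddings. In case ($\romannumeral1$.1) we use Corollary \ref{tent embedding} with $t=0$: it gives $\mathcal{A}\mathcal{T}^p_{q,\alpha}\subset \mathcal{A}\mathcal{T}^{p}_{p,\gamma}=L^p_a(dA_{\gamma})$ with $(\gamma+2)/p=(\alpha+2)/q$ when $q\le p$, or the appropriate version otherwise. We then invoke the classical embedding $L^p_a(dA_\gamma)\subset H(u,v,\beta)$ for $p<u$, $p\le v$ on the critical line (Flett/Pavlovi\'c). If $q>p$ this route fails, and we would instead first pass into $\mathcal{A}\mathcal{T}^{p}_{p,\gamma}$ and then into a Hardy--Littlewood-type inequality $H(p,p,\cdot)\subset H(u,v,\cdot)$; the general tool is $H(p,s,\lambda)\subset H(u,s',\lambda+1/p-1/u)$ for $s\le s'$.

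In cases ($\romannumeral2$.1) and ($\romannumeral3$.1), since $M_u\le M_p$ it suffices to treat $u=p$ with $v\ge\max\{u,q\}$ (taking $u=p$ itself requires $v\ge \max\{p,q\}$). The key inequality to prove is
\[
\int_0^1 M_u^v(f,r)(1-r)^{v(\alpha+2)/q-1}dr\lesssim\|f\|^v_{\mathcal{A}\mathcal{T}^p_{q,\alpha}},\qquad u\le p,\ v\ge\max\{u,q\}.
\]
For $v\ge q$ we reduce to $v=q$ using the $L^\infty$-type bound in $r$ (the sup of $(1-r)^{(\alpha+2)/q}M_u$ is controlled) together with interpolation. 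For $v=q\ge u$ we use Minkowski's integral inequality, since $q/u\ge1$, to pass from $\int_0^1\bigl(\int_{\mathbb{T}}|f(r\xi)|^u\bigr)^{q/u}$ to
\[
\Bigl(\int_{\mathbb{T}}\bigl(\int_0^1|f(r\xi)|^q\ldots dr\bigr)^{u/q}\Bigr)^{q/u}.
\]
The inner radial integral along a segment is dominated by the cone integral after subharmonic averaging, and finally Hölder's inequality gives $L^{u/q}$ from $L^{p/q}$ since $u\le p$. When $v\ge u>q$, the same argument with Minkowski applied at exponent $v/u$ works.

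\emph{Necessity.} Test functions give the exponent relations. Using $f_a(z)=(1-\bar a z)^{-\lambda}$ and computing $\|f_a\|_{\mathcal{A}\mathcal{T}}\asymp(1-|a|)^{1/p+(\alpha+2)/q-\lambda}$ yields the inequality in (i). Lacunary series $\sum 2^{k\sigma}z^{2^k}$ give the condition $(\alpha+2)/q\le\beta$ for $p\ge u$, since their tent norm is governed by $\ell^q$-weighted sums independently of $p$ while their $H(u,v,\beta)$ norm is a weighted $\ell^v$ sum. Taking lacunary coefficients in $\ell^q\setminus\ell^v$ forces $v\ge q$ at equality. To force $v\ge p$ (case $\romannumeral2$.1) or $v\ge u$ (case $\romannumeral3$.1) we use sums $\sum_k c_k f_{a_k}$ of normalized kernels with $a_k$ tending radially to $1$ at separated scales. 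These are essentially disjointly supported in the tent sense, giving $\ell^p$ behaviour, while the mixed norm gives $\ell^v$ behaviour. The condition $p\le v$ in ($\romannumeral1$.1) follows the same way.

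\emph{Main obstacle.} I expect the hardest step to be this last type of sharp counterexample, which requires verifying that sums of peaked functions at different scales have tent norm comparable to the $\ell^p$ sum of their norms. It will likely need a Khinchine/randomization argument over the signs of the $c_k$ combined with the decay of each summand away from its Carleson box.
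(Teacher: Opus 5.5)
Much of your plan is sound. The growth bound $M_u(f,r)\lesssim(1-r)^{-(1/p-1/u)_+-(\alpha+2)/q}\|f\|_{\mathcal{A}\mathcal{T}^p_{q,\alpha}}$ settles (i.2), (ii.2) and (iii.2) more directly than the paper, which goes through Corollary \ref{tent embedding} and Lemma \ref{mix embedding}. Your Minkowski--H\"older argument for (ii.1)/(iii.1) is essentially the paper's. Kernels and lacunary series do give the exponent relations and $v\ge q$. There are, however, two genuine gaps.

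\textbf{First gap: necessity of $v\ge u$ in (iii.1).} Your test functions cannot produce this condition. Let $f_a$ be normalized in $\mathcal{A}\mathcal{T}^p_{q,\alpha}$ and $h=1-|a|$. Then $M_u(f_a,r)\asymp h^{1/u-1/p-(\alpha+2)/q}$ for $1-r\lesssim h$. With $\beta=(\alpha+2)/q$ this gives $\|f_a\|_{H(u,v,\beta)}\asymp h^{1/u-1/p}\to 0$ when $u<p$: a bump on an arc of length $h$ loses $h^{1/u-1/p}$ in passing from $L^p$ to $L^u$ means. Hence $\sum_k c_kf_{a_k}$ with $\{c_k\}\in\ell^p$ and geometric $h_k$ always lies in $H(u,v,\beta)$. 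The $\ell^p$ bound for the tent norm, which you flagged as the main obstacle, is harmless; what fails is any $\ell^v$ lower bound for the mixed norm.

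For $v\ge p$ in (i.1) and (ii.1), the factor $h^{1/u-1/p}$ is absorbed by the critical line (respectively equals $1$), so your construction can work there. Even so, the single function $f_{b,c}$ with $b=\frac1p+\frac{\alpha+2}{q}$ and $\frac1p<c\le\frac1v$ already suffices.

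The paper has no explicit counterexample for $q\le v<u<p$; this is its Problem B. It argues indirectly:
\begin{enumerate}
\item For $q=2/k$, it sets $g=f^k$ with $f\in\mathcal{A}\mathcal{T}^{2p/q}_{2,\alpha}\setminus H(\frac{2u}{q},\frac{2v}{q},\frac{\alpha+2}{2})$.
\item Such $f$ exists because, by Lemma \ref{Fractional Calderon}, the opposite would make fractional differentiation bounded from $H^{2p/q}$ into that mixed norm space, which \cite{Guo2} rules out.
\item General $q$ follows from $\mathcal{A}\mathcal{T}^p_{2/k,\alpha_k}\subset\mathcal{A}\mathcal{T}^p_{q,\alpha}$ for $2/k<q$, by Corollary \ref{tent embedding}.
\end{enumerate}

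\textbf{Second gap: sufficiency in (i.1) when $q>p$.} Corollary \ref{tent embedding} gives $\mathcal{A}\mathcal{T}^p_{q,\alpha}\subset\mathcal{A}\mathcal{T}^p_{p,\gamma}$ with $\frac{\gamma+2}{p}=\frac{\alpha+2}{q}$ only if $q\le p$. So ``first pass into $\mathcal{A}\mathcal{T}^p_{p,\gamma}$'' is exactly the unavailable step. Minkowski only lands in $H(p,q,\frac{\alpha+2}{q})$, which lies in $H(u,v,\beta)$ on the critical line only if $q\le v$ (Lemma \ref{mix embedding}).

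For $p\le v<q$ you need a genuine Hardy--Littlewood gain in the radial index. For instance, with $q=2$ and $v=p$, the claim amounts (after fractional differentiation and Lemma \ref{Fractional Calderon}) to $H^p\subset H(u,p,\frac1p-\frac1u)$. No combination of Minkowski, H\"older and mixed-norm embeddings produces this.

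The paper supplies the gain as follows:
\begin{enumerate}
\item For $q=2k$ it uses the power trick $f\in\mathcal{A}\mathcal{T}^p_{q,\alpha}\Leftrightarrow f^k\in\mathcal{A}\mathcal{T}^{2p/q}_{2,\alpha}$.
\item It combines Lemma \ref{Fractional Calderon} with the boundedness of $\mathfrak{I}^{\mathrm{F}}_{-(\alpha+2)/2}:H^{2p/q}\to H(\frac{2u}{q},\frac{2v}{q},\frac{q\beta}{2})$ from \cite{Guo2}.
\item It then passes from $q$ to $2k\ge q$ via Corollary \ref{tent embedding}.
\end{enumerate}
If $v>p$, you could instead use part (i) of Corollary \ref{tent embedding} to move into $\mathcal{A}\mathcal{T}^{p_1}_{p_1,\gamma_1}=H(p_1,p_1,\frac{\gamma_1+2}{p_1})$ with $p<p_1\le\min\{u,v\}$, but $v=p$ stays out of reach.

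Minor point: $\mathcal{A}\mathcal{T}^p_{p,\gamma}=L^p_a(dA_{\gamma+1})$, not $L^p_a(dA_\gamma)$.
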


\medskip
\noindent
Theorem \ref{embedding theorem} gives a complete characterization of the embedding of analytic tent spaces into mixed norm spaces. Combined with Theorem \ref{symbol theorem}, it yields Theorem \ref{mainthm}. In this way, the probabilistic mapping problem is reduced entirely to two deterministic statements, each of which may be viewed as a natural structural result about analytic tent spaces in its own right.

\subsection{Methodology.}

\medno 
For the fractional operator part, to the best of our knowledge, no analogous boundedness theory for fractional integration on analytic tent spaces or analytic Triebel--Lizorkin spaces is currently available in the literature. Our approach is therefore necessarily different from those used in the existing works. We now briefly describe the main ingredients of the proof.

\medno The first ingredient is the extension to complex orders. The key point here is the invertibility of fractional integration of purely imaginary order. Combining a crucial estimate (Lemma \ref{V_n frac estimate}) due to Pavlović \cite{Pavlovic2013} with the dyadic projection machinery (Definition \ref{Vn definition} and Lemma \ref{Tent V_n biaoda}) and the Cesàro maximal operator (Lemma \ref{cesaro max}), we establish the boundedness and invertibility of $\mathfrak{I}_{i\delta}$ for $\delta\in\mathbb{R}$ on Hardy-type analytic tent spaces.

\medno A second ingredient is the passage between the three models of fractional integration considered in this paper. Here the Equivalence Lemma (Lemma \ref{equavilence}) from \cite{Guo3} allows us to move freely among the Hadamard, Flett, and Riemann--Liouville operators.

\medno A further ingredient is the discrete tent-space framework developed in \cite{Aguilar2023, Ars1999, Coi1985, Jev1996, Lueck1991, Wang2020}. To this end, we work with the tent space of sequences associated with an $r$-lattice $Z=\{a_k\}\subset\mathbb{D}$, where $r\in(0,1)$; see Theorem 2.23 in \cite{zhu2005}. More precisely, for $0<p,q<\infty$, we consider the space $T_q^p(Z)$ consisting of all sequences $\{\lambda_k\}$ such that
$$
\|\{\lambda_k\}\|_{T_q^p(Z)}
:=
\left(\int_{\mathbb{T}}
\left(\sum_{a_k\in\Gamma(\xi)}|\lambda_k|^q\right)^{p/q}
|d\xi|\right)^{1/p}
<\infty.
$$
The factorization method (Lemma \ref{Tent fraction}) together with the duality principles (Lemma \ref{Tent dual p>1} and Lemma \ref{duality norm}) is then used to complete the proof of Theorem \ref{main thm}. A central role is played by the following discrete characterization: for $0<p,q<\infty$, $\alpha>-2$, and an $r$-lattice $Z=\{a_k\}\subset\mathbb{D}$,
$$
\|f\|_{\mathcal{A}\mathcal{T}_{q,\alpha}^p}
\asymp
\left\|\left\{|f(a_k)|(1-|a_k|)^{\frac{\alpha+2}{q}}\right\}\right\|_{T_q^p(Z)}.
$$

\medskip
\noindent
For the randomization part, for the sufficiency part of ($\romannumeral1$) in Theorem \ref{embedding theorem}, we use fractional integration operators (Theorem 6 in \cite{Guo2}) together with fractional $g$-functions (Theorem 10.10 in \cite{pm2019}). We also employ an extrapolation argument: we first prove the case $q=2k$ with $k\ge1$, and then pass to the full range of parameters. For the necessity part, the test function
$$
f_{b,c}(z)=\frac{1}{(1-z)^b}\left(\log \frac{e}{1-z}\right)^{-c}
$$
from Lemma \ref{Fbc mix} and Lemma \ref{Fbc tent} is used.

\medskip
\noindent
While the sufficiency parts of ($\romannumeral2$) and ($\romannumeral3$) in Theorem \ref{embedding theorem} require several additional tools, the necessity arguments are more delicate. The necessity of part ($\romannumeral2$) is obtained by testing the lacunary series
$$
f(z)=\sum_{n=1}^{\infty} a_n z^{2^n-1}.
$$
For part ($\romannumeral3$), when $0<u<p<\infty$, we do not know how to construct an explicit counterexample showing that
\begin{align}\label{counter case}
	\mathcal{A}\mathcal{T}_{q,\alpha}^{p}\nsubseteq H(u,v,\beta).
\end{align}
for $v\ge q$ and $v<u$. At this point, the results in \cite{Guo2}, which characterize the boundedness of fractional integration from Hardy spaces to mixed norm spaces, provide the needed input. Using these fractional-integration techniques, we first prove \eqref{counter case} in the case
$$
q=2/k, \qquad k\ge1.
$$
We then deduce \eqref{counter case} for all $v\ge q$ and $v<u$ by means of the embedding corollary (Corollary \ref{tent embedding}) from \cite{perala2018}. We conclude the paper with an open problem asking whether one can avoid the above fractional-integration argument and the $2/k$ reduction in a more direct approach to this embedding problem.

\subsection{Organization of the paper.}

\noindent
Section~\ref{preliminary} gathers the preliminary material used throughout the paper. In particular, it contains the fractional integration tools, the radial and discrete descriptions of analytic tent spaces, the sequence tent-space machinery, and the $p$-Banach-space estimates that will later be used in the probabilistic part.

\medskip
\noindent
The fractional part is treated first. Section~\ref{proof of bound sec} establishes the boundedness theorem for Hardy-type analytic tent spaces, namely Theorem~\ref{main thm}, and also yields the analytic Triebel--Lizorkin counterpart, Theorem~\ref{mainthm3}. Section~\ref{proof of compact sec} is devoted to compactness: it proves Theorem~\ref{mainthm2}, and the same reduction as in the boundedness part gives the Triebel--Lizorkin counterpart Theorem~\ref{mainthm4}.

\medskip
\noindent
The probabilistic part is carried out in the remaining sections. Section~\ref{sec:proof-thm-sym} proves the symbol-space theorem, Theorem~\ref{symbol theorem}. Section~\ref{sec:proof-thm-emdeding} proves the mixed norm embedding theorem, Theorem~\ref{embedding theorem}. Combined with the reduction developed in Section~\ref{preliminary}, these two results yield the almost sure mapping theorem on analytic tent spaces, Theorem~\ref{mainthm}, and its analytic Triebel--Lizorkin counterpart, Corollary~\ref{main 3}. The paper concludes with a remark and an open problem concerning the mixed norm embedding theorem.

\bigskip
\subsection{Notation and conventions.}

\noindent
Let $H(\mathbb{D})$ denote the space of analytic functions on the unit disk. For $0<p<\infty$, the Hardy space $H^p(\mathbb{D})$ consists of those $f\in H(\mathbb{D})$ such that
\[
\|f\|_{H^p(\mathbb{D})}
=
\sup_{0<r<1} M_p(r,f)
<\infty,
\]
where
\[
M_p(r,f)
=
\left(
\frac{1}{2\pi}\int_{\mathbb{T}} |f(r\xi)|^p\,|d\xi|
\right)^{1/p}.
\]

\medskip
\noindent
For $\alpha>-1$, let $dA_\alpha$ denote the weighted area measure
\[
dA_\alpha(z)
=
\frac{\alpha+1}{\pi}(1-|z|^2)^\alpha\,dA(z),
\]
where $dA$ is the Euclidean area measure on $\mathbb{D}$. The weighted Bergman space is
\[
L_a^p(\mathbb{D},dA_\alpha)
=
L^p(\mathbb{D},dA_\alpha)\cap H(\mathbb{D}).
\]
For convenience, we shall often write $H^p$ in place of $H^p(\mathbb{D})$ and $L_a^p(dA_\alpha)$ in place of $L_a^p(\mathbb{D},dA_\alpha)$.

\medskip
\noindent
Given two nonnegative quantities $A$ and $B$, we write $A\lesssim B$ if there exists a positive constant $C$, independent of the relevant variables, such that $A\leq C B$. The notation $A\gtrsim B$ is defined analogously, and $A\asymp B$ means that both $A\lesssim B$ and $A\gtrsim B$ hold. For $1<p<\infty$, we write $p'=p/(p-1)$ for the H\"older conjugate of $p$. Finally, the symbol ``$\Leftrightarrow$'' means ``if and only if''.

\section{Preliminary Issues}\label{preliminary}

\medno To clear up the ground, in this section we collect the necessary preliminary material in the following four aspects:
\begin{enumerate}
	\item testing functions, including the power function, the lacunary series, and the logarithmically perturbed test functions;
	\item fractional integration tools, including the invertibility lemma for fractional integration operators of purely imaginary orders and the equivalence lemma for three types of fractional integration;
	\item discrete tent-space tools, including the discretization of tent spaces, the factorization lemma, and the duality results;
	\item $p$-Banach space techniques, which rely on the framework developed in \cite{IMRN}.
\end{enumerate}

\subsection{Fractional integration tools}

\medno Here we deal with the fractional integration tools needed later.

\begin{lemma}\label{invert}
	If $0<p,q<\infty$, $\alpha>-2$ and $\sigma\in\mathbb{R},$ then
	$\mathfrak{I}_{i\sigma}: \mathcal{A}\mathcal{T}_{q,\alpha}^{p} \to \mathcal{A}\mathcal{T}_{q,\alpha}^{p}$ is bounded and invertible, where $\mathfrak{I}_{i\sigma}$ is one of $\mathfrak{I}_{i\sigma}^{\mathrm{H}}$, $\mathfrak{I}_{i\sigma}^{\mathrm{F}}$ and $\mathfrak{I}_{i\sigma}^{\mathrm{RL}}$.
\end{lemma}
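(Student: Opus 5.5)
Since $\mathfrak{I}_{i\sigma}\mathfrak{I}_{-i\sigma}=\mathrm{Id}$ for the Hadamard and Flett operators (up to the constant term in the Hadamard case), invertibility will follow once boundedness is known for every real $\sigma$. Thus the plan is to prove boundedness of $\mathfrak{I}_{i\sigma}$ on $\mathcal{A}\mathcal{T}^p_{q,\alpha}$, with a constant depending polynomially on $|\sigma|$, and then take $\mathfrak{I}_{-i\sigma}$ as the inverse.

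For the Riemann--Liouville operator, the multiplier $\Gamma(n+1)/\Gamma(n+1+i\sigma)$ differs from $(n+1)^{-i\sigma}$ by a factor $1+O(1/n)$ with a smooth symbol expansion. I would therefore reduce to the Flett case via the Equivalence Lemma (Lemma \ref{equavilence}), or directly by noting that the quotient multiplier is of order $0$ and hence bounded by the same argument below. The Hadamard case differs from Flett by the symbol $(1+1/n)^{i\sigma}$, which is handled the same way.

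The core step is the Flett case. I would use the dyadic decomposition $f=\sum_n V_n f$ (Definition \ref{Vn definition}), where $V_n$ are de la Vallée Poussin type polynomial projections with spectrum in $[2^{n-1},2^{n+1}]$. Lemma \ref{Tent V_n biaoda} should give an equivalent norm of $\mathcal{A}\mathcal{T}^p_{q,\alpha}$ expressed through the pieces $V_nf$, something like
\[
\|f\|\asymp\Big\|\Big(\sum_n 2^{-n(\alpha+2)}\,|V_nf|^q\Big)^{1/q}\Big\|
\]
in a suitable admissible/tent sense. Pavlović's estimate (Lemma \ref{V_n frac estimate}) controls $V_n\mathfrak{I}_{i\sigma}f$ by $(1+|\sigma|)^{C}$ times a Cesàro-type maximal function of nearby blocks $V_{n-1}f, V_nf, V_{n+1}f$. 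This uses the fact that the symbol $(k+1)^{-i\sigma}$ restricted to a dyadic block satisfies Marcinkiewicz/Hörmander-type difference bounds uniformly in $n$. Plugging this into the block characterization, I would then need the maximal function to be bounded on the relevant vector-valued tent space. Lemma \ref{cesaro max} should supply exactly this, via its pointwise control by the admissible/Hardy–Littlewood maximal function in a slightly wider cone. Changing aperture is harmless in tent spaces, which closes the estimate.

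The main obstacle is the range $p\le 1$ or $q\le 1$, where maximal functions are not bounded on $L^p$ directly. I would handle this by using Pavlović's estimate in its $r$-th power form, dominating $|V_n\mathfrak{I}_{i\sigma}f|^r$ by a maximal function of $|V_nf|^r$ for small $r<\min(p,q)$. Because $V_nf$ is a polynomial of degree $\asymp 2^n$, a Plancherel–Pólya type subharmonicity estimate is available. Boundedness of the maximal operator on $L^{p/r}(\ell^{q/r})$ then finishes the argument.
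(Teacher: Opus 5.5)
Your proposal takes essentially the same route as the paper. Both use the dyadic block norm of Lemma \ref{Tent V_n biaoda}, Pavlovi\'{c}'s estimate (Lemma \ref{V_n frac estimate}) applied blockwise, Ces\`{a}ro maximal control via Lemma \ref{cesaro max}, the inverse $\mathfrak{I}_{-i\sigma}$, and Lemma \ref{equavilence} to pass between operator types. The paper simply takes the Hadamard case (with $f(0)=0$) as the core, and your separate treatment of $p\le 1$ or $q\le 1$ is unnecessary because Lemma \ref{cesaro max} already holds for all $0<p,q<\infty$ once $m>\max\{0,\frac1p-1,\frac1q-1\}$.
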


\smallno To prove the invertibility lemma, we shall need the following.

\begin{definition}\label{Vn definition}
	Let $V_n$ be polynomials such that, for all $p \in(0, \infty]$,
	$$
	\begin{aligned}
			\operatorname{supp}\left(\widehat{V}_n\right)  \subseteq\left(2^{n-1}, 2^{n+1}\right),\ \text { for } n \geq 1, \ \text { and } \ \operatorname{supp}\left(\widehat{V}_0\right) \subset[0,2);
	\end{aligned}
	$$
	$$
	\begin{aligned}
	 f(z)  =\sum_{n=0}^{\infty} V_n * f(z), \ \ f \in H(\mathbb{D}), \ \ z \in \mathbb{D};
	\end{aligned}
	$$
	$$
	\begin{aligned}
	\left\|V_n * f\right\|_{H^p}  \leq C\|f\|_{H^p}, \ \ f \in H^p ;\ \ \text { and } \ \left\|V_n\right\|_{H^p}  =2^{n(1-1 / p)}.
	\end{aligned}
	$$
\end{definition}

\begin{lemma}[\cite{Oswald1983}]\label{cesaro max}
	Let $0<p,q<\infty$ and $m>\max\{0,\frac{1}{p}-1,\frac{1}{q}-1\}$. If $\{f_{j}\}_{j=0}^{\infty}$ is a sequence in $H^{q}$, then
	$$
	\int_{\mathbb{T}}
	\left(
	\sum_{j=0}^{\infty}\sigma_{*}^{m}f_{j}(\xi)^{q}\right)
	^{\frac{p}{q}}|d\xi|
	\leq C_{p,q,m}
	\int_{\mathbb{T}}\left(
	\sum_{j=0}^{\infty}|f_{j}(\xi)|^{q}\right)
	^{\frac{p}{q}}|d\xi|.
	$$
\end{lemma}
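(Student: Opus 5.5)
The plan is to reduce the lemma to the Fefferman--Stein vector-valued maximal inequality. The reduction rests on a pointwise domination of the Cesàro maximal function by a powered Hardy--Littlewood maximal function. Here $\sigma_*^m f(\xi)=\sup_{n}|\sigma_n^m f(\xi)|$, where $\sigma_n^m$ denotes the Cesàro means of order $m$, and $M$ denotes the Hardy--Littlewood maximal operator on $\mathbb{T}$.

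Since $m+1>\max\{1,1/p,1/q\}$, we may fix an exponent $s$ with
\[
\frac{1}{m+1}<s<\min\{1,p,q\}.
\]
The key pointwise estimate to prove is that, for every $f\in H^{s}$ and every $\xi\in\mathbb{T}$,
\begin{equation*}
\bigl(\sigma_*^m f(\xi)\bigr)^s\le C_{s,m}\, M\bigl(|f|^s\bigr)(\xi).
\end{equation*}
Granting this, apply it to each $f_j$. The left side of the lemma is then dominated by
\[
\int_{\mathbb{T}}\Bigl(\sum_j \bigl(M(|f_j|^s)\bigr)^{q/s}\Bigr)^{\frac{p/s}{q/s}}|d\xi|.
\]
Because $p/s>1$ and $q/s>1$, the Fefferman--Stein inequality in $L^{p/s}(\ell^{q/s})$ applied to the sequence $\{|f_j|^s\}$ bounds this by
\[
C\int_{\mathbb{T}}\Bigl(\sum_j |f_j|^{q}\Bigr)^{p/q}|d\xi|,
\]
which is the assertion of the lemma.

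To prove the pointwise estimate, fix $n$ and put $\rho=1-1/(n+1)$. Since $\sigma_n^m f$ only involves the Taylor coefficients of index at most $n$, we can write $\sigma_n^m f=f_\rho * L_n$, where $f_\rho(z)=f(\rho z)$ and $L_n$ is the polynomial with coefficients
\[
\frac{A^{m}_{n-k}}{A^{m}_n}\,\rho^{-k},\qquad 0\le k\le n.
\]
Since $\rho^{-k}\le e$ for $k\le n$ and this factor is a smooth multiplier at scale $n$, summation by parts $m+1$ times (or the standard Cesàro kernel estimates) should give
\[
|L_n(e^{i\theta})|\lesssim \frac{n}{(1+n|\theta|)^{m+1}}.
\]
We then partition $\mathbb{T}$ into arcs $I_k$ of length about $1/n$ at distance about $k/n$ from $\xi$. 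This gives
\[
|\sigma_n^m f(\xi)|\lesssim \sum_k (1+k)^{-(m+1)}\sup_{I_k}|f_\rho|.
\]
Because $s<1$, we have $(\sum_k a_k)^s\le\sum_k a_k^s$, so raising to the power $s$ yields
\[
|\sigma_n^m f(\xi)|^s\lesssim\sum_k (1+k)^{-s(m+1)}\sup_{I_k}|f_\rho|^s .
\]
Subharmonicity of $|f|^s$ gives, for $z=\rho\zeta$ with $\zeta\in I_k$, the bound $|f(z)|^s\le P_\rho[|f|^s](\zeta)$. This in turn is controlled by averages of $|f|^s$ over arcs of length about $2^{l}/n$ around $I_k$, weighted by $2^{-l}$. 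Summing over $k$ and $l$ with the weight $(1+k)^{-s(m+1)}$, which is summable since $s(m+1)>1$, yields a radially decreasing integrable kernel at scale $1/n$ applied to $|f|^s$. This is bounded by $M(|f|^s)(\xi)$ uniformly in $n$, and taking the supremum over $n$ gives the pointwise estimate.

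The main obstacle is this pointwise step, specifically getting the kernel decay exponent $m+1$ to survive the passage to the $s$-th power. The $\ell^s$ discretization and the Poisson control of $\sup_{I_k}|f_\rho|^s$ must be combined without losing decay, and the strict inequality $s(m+1)>1$, which is exactly where the hypothesis $m>\max\{0,1/p-1,1/q-1\}$ enters, is what makes the resulting series converge. The remaining points are routine: the kernel bound for $L_n$, which is classical for Cesàro kernels, and a density argument justifying the manipulations for general $f_j\in H^q$.
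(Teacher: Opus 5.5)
The paper does not prove this lemma; it only cites Oswald. So your argument has to stand on its own, and it has a genuine gap at the kernel bound.

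\textbf{What is sound.} Your architecture is correct: with $\frac{1}{m+1}<s<\min\{1,p,q\}$, a pointwise bound $(\sigma_*^m f)^s\lesssim M(|f|^s)$ for $f\in H^s$, followed by Fefferman--Stein in $L^{p/s}(\ell^{q/s})$, proves the lemma. The step you flagged as the main obstacle also works. Given a kernel majorant $n(1+n|\theta|)^{-(m+1)}$, the discretization, the dyadic splitting of the Poisson kernel and the summation in $k$ give, at each dyadic level $l$, a radially decreasing majorant of $L^1$ norm $O(1)$. This is where $s(m+1)>1$ enters, and the weights $2^{-l}$ then sum.

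\textbf{The gap.} The kernel bound is false for the $L_n$ you wrote down. Since $L_n(e^{i\theta})=\sum_{k=0}^n\lambda_k e^{ik\theta}$ is one-sided with $\lambda_0=1$, summation by parts gives
\[
(1-e^{i\theta})L_n(e^{i\theta})=1+\sum_{k=1}^{n}(\lambda_k-\lambda_{k-1})e^{ik\theta}-\lambda_n e^{i(n+1)\theta}.
\]
So the boundary term $(1-e^{i\theta})^{-1}$ survives, and $L_n$ does not decay in $n$ away from $\theta=0$. For instance $L_n(-1)\to\frac12$, while your bound predicts $O(n^{-m})$; the true size for $|\theta|\gg 1/n$ is $|\theta|^{-1}$. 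With that decay your discretization only gives weights $(1+k)^{-1}$, hence $(1+k)^{-s}$ after the $\ell^s$ step, and the series diverges.

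Falling back on the classical two-sided Ces\`aro kernel $\sum_{|k|\le n}\frac{A^m_{n-|k|}}{A^m_n}e^{ik\theta}$ does not rescue this. It does represent $\sigma_n^m f$ for analytic $f$, but its corner at $k=0$ caps the decay at $|\theta|^{-2}$ when $m>1$. That is too weak whenever $\min\{p,q\}\le\frac12$, since then $m>1$ and $s<\frac12$.

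\textbf{The fix.} The missing idea is to use the analyticity of $f$ to choose the negative-frequency part of the kernel so that the multiplier has no singularity at $k=0$. The cleanest way is Cauchy's formula. From $\sum_{n\ge0}A_n^m\sigma_n^m f(\xi)w^n=f(w\xi)(1-w)^{-m-1}$ one gets
\[
\sigma_n^m f(\xi)=\frac{1}{A_n^m}\,\frac{1}{2\pi i}\oint_{|w|=\rho}\frac{f(w\xi)}{(1-w)^{m+1}w^{n+1}}\,dw .
\]
Take $\rho=1-\frac{1}{n+1}$ and use $\rho^{-n}\le e$, $A_n^m\asymp(n+1)^m$ and $|1-\rho e^{it}|\asymp\frac{1}{n+1}+|t|$. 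This gives
\[
|\sigma_n^m f(\xi)|\lesssim\int_{-\pi}^{\pi}|f(\rho e^{it}\xi)|\,\frac{n+1}{\bigl(1+(n+1)|t|\bigr)^{m+1}}\,dt ,
\]
which is exactly the input your discretization needs. Your $L_n$ differs from this Cauchy kernel only by negative-frequency terms. Those terms annihilate $f_\rho$ but carry the $|\theta|^{-1}$ tail. With this replacement the rest of your proof goes through.
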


\medno For more information on the Ces\`{a}ro maximal operators $\sigma_{*}^{m}$, one may refer to \cite{Pavlovic2013}.

\begin{lemma}[\cite{Pavlovic2013}]\label{V_n frac estimate}
	If $\gamma,\sigma\in\mathbb{R}$ and $f\in H(\mathbb{D})$, then
	$$
	\big|
	\mathfrak{I}_{\gamma+i\sigma}^{\mathrm{H}}V_{n}*f\big|
	\lesssim 2^{n\gamma}
	\sigma_{*}^{m}(V_{n}*f)
	$$
	and
	$$
	\big|V_{n}*f\big|
	\lesssim  2^{-n\gamma}
	\sigma_{*}^{m}(\mathfrak{I}_{\gamma+i\sigma}^{\mathrm{H}}V_{n}*f).
	$$
\end{lemma}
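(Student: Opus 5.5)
The plan is to prove both inequalities by a single pointwise estimate for dyadic coefficient multipliers, which the Cesàro maximal operator $\sigma_*^m$ controls. Throughout I take $m$ to be a positive integer. This is harmless, since the lemma is only used with $m$ large; if a non-integer $m$ is required, I would instead use fractional Cesàro differences, which cost only a harmless tail estimate. The key observation is that $\widehat{V_n*f}$ is supported in $(2^{n-1},2^{n+1})$. So on this support the Hadamard multiplier $k^{-(\gamma+i\sigma)}$ may be replaced by a smooth dyadic profile. Fix $\varphi\in C_c^\infty(1/4,4)$ with $\varphi\equiv1$ on $[1/2,2]$, and put
\[
\psi(x)=\varphi(x)\,x^{-\gamma-i\sigma},\qquad k^{-(\gamma+i\sigma)}=2^{-n(\gamma+i\sigma)}\psi(k/2^n)\quad\text{for } 2^{n-1}<k<2^{n+1}.
\]
Hence $\mathfrak{I}^{\mathrm{H}}_{\gamma+i\sigma}V_n*f=2^{-n(\gamma+i\sigma)}\,\psi(D/2^n)(V_n*f)$, where $\psi(D/N)$ denotes the coefficient multiplier $b_k\mapsto\psi(k/N)b_k$. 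The power of $2^{n}$ in front is then exactly the dyadic factor in the statement, with the sign dictated by the normalisation of $\mathfrak{I}_t$. The whole lemma therefore reduces to the following claim.

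\emph{Claim.} For $\psi\in C_c^\infty(0,\infty)$ and every $F(z)=\sum b_kz^k$,
\[
|\psi(D/N)F(z)|\le C_{\psi,m}\,\sigma_*^m F(z),
\]
uniformly in $N\ge1$ and $z\in\mathbb{D}$ (or $z\in\mathbb{T}$).

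I would prove the claim by $(m+1)$-fold Abel summation. Write $A_k^m=\binom{k+m}{k}$ for the Cesàro numbers and $\sigma_k^m F(z)=(A_k^m)^{-1}\sum_{j\le k}A_{k-j}^{m}\,b_jz^j$ for the Cesàro means. Then, with $c_k=\psi(k/N)$,
\[
\sum_k c_kb_kz^k=\sum_k \Delta^{m+1}c_k\,A_k^m\,\sigma_k^mF(z).
\]
Since $\psi$ is smooth with support in $[N/4,4N]$ after scaling, the mean value theorem gives $|\Delta^{m+1}c_k|\lesssim N^{-m-1}\|\psi^{(m+1)}\|_\infty$. Moreover only $O(N)$ indices contribute, and $A_k^m\asymp k^m\asymp N^m$ on that range. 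Summing,
\[
|\psi(D/N)F(z)|\lesssim N\cdot N^{-m-1}\cdot N^{m}\,\sup_k|\sigma_k^mF(z)|\le C\,\sigma_*^mF(z).
\]
The constant depends on $\gamma,\sigma,m$ only through finitely many derivatives of $\psi$, which is polynomial in $|\gamma|$ and $|\sigma|$.

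The first inequality of the lemma is the claim applied to $F=V_n*f$ and $N=2^n$. For the second, I apply the claim to $F=\mathfrak{I}^{\mathrm{H}}_{\gamma+i\sigma}V_n*f$ with the inverse profile $\tilde\psi(x)=\varphi(x)x^{\gamma+i\sigma}$. This is legitimate because $\varphi\equiv1$ on the spectrum, so that $\tilde\psi(D/2^n)\psi(D/2^n)(V_n*f)=V_n*f$, and it produces the reciprocal dyadic factor. The step I expect to require the most care is the bookkeeping in the Abel summation identity. Specifically, one must check that the boundary terms vanish, which holds because $c_k$ has finite support and $F$ has finitely many nonzero coefficients on the block, and that the supremum defining $\sigma_*^m$ is taken over the same indices $k$ that appear in the sum.
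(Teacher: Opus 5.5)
The paper gives no proof of this lemma; it only cites Pavlovi\'c. Your argument is the standard proof of that estimate. On the dyadic block the Hadamard multiplier equals $2^{-n(\gamma+i\sigma)}\psi(k/2^n)$. The $(m+1)$-fold Abel summation identity $\sum_k c_kb_kz^k=\sum_k(\Delta^{m+1}c)_kA_k^m\sigma_k^mF(z)$, with $\Delta c_k=c_k-c_{k+1}$, holds for every $F\in H(\mathbb{D})$ because $c$ is finitely supported. Combining it with $|\Delta^{m+1}c_k|\lesssim N^{-m-1}$, the $O(N)$ nonzero terms and $A_k^m\lesssim N^m$ gives your claim.

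Your hedge about non-integer $m$ is unnecessary. For $m'\ge m>-1$ the $(C,m')$ means are positive averages of the $(C,m)$ means, so $\sigma_*^{m'}F\le\sigma_*^mF$. Hence the integer case already covers every real $m$.

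Two pieces of bookkeeping should be stated explicitly rather than hedged.

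First, the sign of the exponent. With the paper's convention $\mathfrak{I}^{\mathrm H}_t:a_k\mapsto k^{-t}a_k$, your computation gives
\[
|\mathfrak{I}^{\mathrm H}_{\gamma+i\sigma}V_n*f|\lesssim 2^{-n\gamma}\sigma_*^m(V_n*f),\qquad |V_n*f|\lesssim 2^{n\gamma}\sigma_*^m(\mathfrak{I}^{\mathrm H}_{\gamma+i\sigma}V_n*f).
\]
These are the reciprocals of the printed factors, not ``exactly the dyadic factor in the statement''. The printed pair is in fact false for every $\gamma\neq0$. Take $f=z^k$ with $\widehat V_n(k)\neq0$, so $V_n*f=cz^k$ and $\sigma_*^m(cz^k)=|c|$ on $\mathbb{T}$. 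Then, as $n\to\infty$, the first printed inequality fails for $\gamma<0$ and the second fails for $\gamma>0$. The printed exponents are the ones for the fractional derivative $a_k\mapsto k^{t}a_k$. The discrepancy is harmless for the paper, which only uses $\gamma=0$.

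Second, the block $n=0$. The spectrum of $V_0$ contains $k=0$, where $\varphi(0)=0$ and $\mathfrak{I}^{\mathrm H}$ annihilates constants. So ``$\varphi\equiv1$ on the spectrum'' fails there, and the second inequality genuinely requires $f(0)=0$. This is the normalisation the paper adopts in the proof of Lemma~\ref{invert}.
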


\begin{lemma}[\cite{pm2019}]\label{Tent V_n biaoda}
	Let $0<p,q<\infty$ and $\alpha>-2.$ Then for $f\in H(\mathbb{D})$,
	$$
	\int_{\mathbb{T}}\left(
	\int_{0}^{1}|f(r\xi)|^{q}(1-r)^{\alpha+1}dr\right)^{\frac{p}{q}}|d\xi|
	\asymp
	\int_{\mathbb{T}}\left(
	\sum_{n=0}^{\infty}2^{-n(\alpha+2)}|V_{n}*f(\xi)|\right)^{\frac{p}{q}}|d\xi|.
	$$
\end{lemma}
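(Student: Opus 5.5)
We read the right-hand side with $|V_n*f(\xi)|^q$ inside the sum; this is the form needed for homogeneity, and it is the one in \cite{pm2019}. The plan is to discretize the radial integral into dyadic annuli $I_k=[1-2^{-k},1-2^{-k-1})$, on which $(1-r)^{\alpha+1}dr\asymp 2^{-k(\alpha+2)}\cdot 2^{k}dr$. We then compare $f(r\xi)$ for $r\in I_k$ with the blocks $V_n*f(\xi)$. The basic tool is a multiplier estimate in the spirit of Lemma \ref{V_n frac estimate}. For $g$ with spectrum in $(2^{n-1},2^{n+1})$ and $0<r<1$, the function $j\mapsto r^j$ restricted to that range is a smooth symbol. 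Hence, for $m$ large and every $N$,
\[
|g(r\xi)|\lesssim C_N\,(1+2^n(1-r))^{-N}\,\sigma_*^m g(\xi),
\]
and in the converse direction $|g(\xi)|\lesssim \sigma_*^m(g_r)(\xi)$ whenever $1-r\le 2^{-n}$. We will use both with $g=V_n*f$.

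\emph{Upper bound ($\lesssim$).} Write $f(r\xi)=\sum_n (V_n*f)(r\xi)$. Let $A_n(\xi)=\sigma_*^m(V_n*f)(\xi)$, and let $k$ be such that $r\in I_k$. The estimate above gives $|f(r\xi)|\lesssim \sum_n \min\{1,2^{N(k-n)}\}A_n(\xi)$.
\begin{itemize}
\item If $q\le1$, use $(\sum a_n)^q\le\sum a_n^q$.
\item If $q>1$, apply H\"older with the weights $2^{\varepsilon|n-k|}$, where $0<\varepsilon<(\alpha+2)/q$ and $N$ is large.
\end{itemize}
In either case
\[
\int_{I_k}|f(r\xi)|^q(1-r)^{\alpha+1}dr\lesssim 2^{-k(\alpha+2)}\sum_n 2^{-\delta|n-k|q}\min\{1,2^{Nq(k-n)}\}A_n(\xi)^q
\]
for some $\delta>0$. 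Summing over $k$ is a Schur-type estimate. For $k\ge n$ the factor $2^{-(k-n)(\alpha+2)}$ is summable because $\alpha+2>0$. For $k<n$ the factor $2^{(n-k)(\alpha+2)}$ is absorbed by the decay $2^{-Nq(n-k)}$. The result is
\[
\int_0^1|f(r\xi)|^q(1-r)^{\alpha+1}dr\lesssim\sum_n 2^{-n(\alpha+2)}A_n(\xi)^q .
\]
Raising to the power $p/q$, integrating over $\mathbb{T}$ and applying Lemma \ref{cesaro max} (with $m>\max\{0,1/p-1,1/q-1\}$) removes $\sigma_*^m$.

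\emph{Lower bound ($\gtrsim$).} Fix $n$ and $r\in I_n$. Then $V_n*f=W_{n,r}*(V_n*f_r)$, where $f_r(z)=f(rz)$ and $W_{n,r}$ has multiplier $r^{-j}$ on the spectrum of $V_n$. This multiplier is uniformly smooth at scale $2^n$, so $|V_n*f(\xi)|\lesssim\sigma_*^m(V_n*f_r)(\xi)$ uniformly in $r\in I_n$. Averaging over $I_n$ gives
\[
2^{-n(\alpha+2)}|V_n*f(\xi)|^q\lesssim \int_{I_n}\sigma_*^m(V_n*f_r)(\xi)^q(1-r)^{\alpha+1}dr .
\]
Summing over $n$ and integrating over $\mathbb{T}$, it remains to prove a vector-valued estimate. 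Namely, the family $\{V_n*f_r\}_{r\in I_n,\,n\ge0}$ must be controlled by $\{f_r\}$ in the mixed norm $L^p(\mathbb{T};L^q((1-r)^{\alpha+1}dr))$. We get this in two steps. First, discretize each $I_n$ into a bounded number of radii; this is possible because $f_r$ varies little across an annulus at the relevant frequencies, by subharmonicity. Second, apply Lemma \ref{cesaro max} to the resulting countable family, together with the uniform vector-valued boundedness of $g\mapsto V_n*g$, which follows from $|V_n*g|\lesssim\sigma_*^m g$ for polynomial kernels of degree $\lesssim2^n$.

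The main obstacle is this last vector-valued step in the lower bound for small $p,q$. There, $V_n*$ is not bounded on $H^q$-valued $L^p$ directly, and the Ces\`aro-maximal domination has to be applied to the dilates $f_r$ rather than to $f$. I would first try replacing $f_r$ by $V_{n-1}*f+V_n*f+V_{n+1}*f$ evaluated at $r\xi$, bounding it by Peetre-type maximal functions as in the upper bound, and closing the argument with Lemma \ref{cesaro max}.
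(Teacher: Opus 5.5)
The paper does not prove this lemma; it quotes it from \cite{pm2019}, and in doing so drops the exponent $q$ on $|V_n*f(\xi)|$, which you rightly restored. So your argument has to stand on its own.

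Your upper bound does. The off-diagonal decay $(1+2^n(1-r))^{-N}\sigma_*^m(V_n*f)$, the Schur-type summation and Lemma~\ref{cesaro max} give a complete proof. The one flaw is in your display: it claims a decay $2^{-\delta|n-k|q}$ that you do not have. For $n\le k$ and $q\le 1$ the factor is just $1$. For $q>1$, H\"older with weights $2^{\varepsilon|n-k|}$ produces the growth factor $2^{+\varepsilon|n-k|q}$. This is harmless: it is absorbed by $2^{-(k-n)(\alpha+2)}$ when $k\ge n$ exactly because $\varepsilon q<\alpha+2$, and by $2^{-Nq(n-k)}$ when $k<n$.

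The genuine gap is the lower bound, which your proposal leaves unfinished.

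\emph{Why your suggested step does not close it.} Reducing each $I_n$ to a bounded number of radii ``by subharmonicity'' leaves you, after the Ces\`aro steps, with point values $2^{-n(\alpha+2)}|f(r_{n,j}\xi)|^q$. For fixed $\xi$ these are not dominated by $\int_{I_n}|f(r\xi)|^q(1-r)^{\alpha+1}dr$. Subharmonicity only bounds them by averages over hyperbolic disks, i.e.\ by a cone functional $\int_{\Gamma'(\xi)}|f|^q(1-|z|)^\alpha dA$. Returning to the radial functional would require a change of aperture plus Lemma~\ref{tent-radial-description}, and your plan invokes neither.

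\emph{There is no small-$p,q$ obstruction.} All functions involved are analytic, and Lemma~\ref{cesaro max} holds for all $0<p,q<\infty$ once $m>\max\{0,1/p-1,1/q-1\}$.

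\emph{The missing step.} Put $V_n$ into the multiplier. For $n\ge1$ and $r\in I_n$, the symbol $j\mapsto r^{-j}\widehat{V}_n(j)$ is bounded and smooth at scale $2^n$. Hence $|V_n*f(\xi)|\lesssim\sigma_*^m(f_r)(\xi)$, uniformly in $r\in I_n$. Two remarks on this:
\begin{itemize}
\item It uses $\widehat{V}_n(j)=\omega(2^{-n}j)$ with $\omega$ smooth, as in Pavlovi\'c's construction. Being a polynomial of degree $\lesssim 2^n$ is not enough, as the Dirichlet kernels show. This also corrects your stated justification for $|V_n*g|\lesssim\sigma_*^m g$.
\item For $n=0$, average over $[1/4,1/2)$ instead, since $r^{-j}$ is unbounded near $r=0$.
\end{itemize}
Averaging over $I_n$ and summing in $n$ gives
\[
\sum_{n}2^{-n(\alpha+2)}|V_n*f(\xi)|^q\lesssim\int_0^1\sigma_*^m(f_r)(\xi)^q(1-r)^{\alpha+1}\,dr .
\]

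\emph{Finishing.} Apply Lemma~\ref{cesaro max} to the family $\{f_r\}$. Its constant does not depend on the family. Moreover, $r\mapsto f_r(\xi)$ and $r\mapsto\sigma_*^m(f_r)(\xi)$ are continuous on $[0,1-\delta]$, uniformly in $\xi$. So you may apply the lemma to Riemann sums whose mesh tends to $0$ (that is, with \emph{unboundedly} many radii per $I_n$), and then let $\delta\to0$ by monotone convergence. This yields the radial functional $\int_{\mathbb{T}}\bigl(\int_0^1|f(r\xi)|^q(1-r)^{\alpha+1}dr\bigr)^{p/q}|d\xi|$ directly, with no discretization of $f$ and no Peetre maximal functions.
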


\smallno We now proceed with the proof of Lemma \ref{invert}.

\begin{proof}[Proof of Lemma \ref{invert}]
	It suffices to prove, for $\sigma\in\mathbb{R}$, $0<p,q<\infty$ and $\alpha>-2,$
	$$
	||\mathfrak{I}_{i\sigma}^{\mathrm{H}}f||_{\mathcal{A}\mathcal{T}_{q,\alpha}^{p}}
	\asymp
	||f||_{\mathcal{A}\mathcal{T}_{q,\alpha}^{p}},
	$$
	where $f\in \mathcal{A}\mathcal{T}_{q,\alpha}^{p}$ and $f(0)=0.$
	By Lemma \ref{cesaro max} and Lemma \ref{V_n frac estimate}, followed by Lemma \ref{Tent V_n biaoda}, we have
	\begin{align*}
		||\mathfrak{I}_{i\sigma}^{\mathrm{H}}f||_{\mathcal{A}\mathcal{T}_{q,\alpha}^{p}}^p
		&\asymp
		\int_{\mathbb{T}}\left(
		\sum_{n=0}^{\infty}2^{-n(\alpha+2)}
		|V_{n}*\mathfrak{I}_{i\sigma}^{\mathrm{H}} f(\xi)|^{q}\right)^{\frac{p}{q}}|d\xi|\\
		&\lesssim
		\int_{\mathbb{T}}\left(
		\sum_{n=0}^{\infty}2^{-n(\alpha+2)}
		|\sigma_{m}^{*}V_{n}*f(\xi)|^{q}
		\right)^{\frac{p}{q}}|d\xi|\\
		&\lesssim
		\int_{\mathbb{T}}\left(
		\sum_{n=0}^{\infty}2^{-n(\alpha+2)}
		|V_{n}*f(\xi)|^{q}
		\right)^{\frac{p}{q}}|d\xi|.
	\end{align*}
	The converse direction follows from the fact $\mathfrak{I}_{i\sigma}^{\mathrm{H}}\mathfrak{I}_{-i\sigma}^{\mathrm{H}}=I$, and the proof is complete now.
\end{proof}

\begin{lemma}[\cite{Guo3}
]\label{equavilence}
	Let $0<p,q<\infty$, $\alpha>-2$ and $t\in\mathbb{C}.$ Then for $f\in H(\mathbb{D})$,
	$$
	\mathfrak{I}_{t}^{\mathrm{H}}f\in \mathcal{A}\mathcal{T}_{q,\alpha}^{p}
	\Leftrightarrow
	\mathfrak{I}_{t}^{\mathrm{F}}f\in \mathcal{A}\mathcal{T}_{q,\alpha}^{p}
	\Leftrightarrow
	\mathfrak{I}_{t}^{\mathrm{RL}}f\in \mathcal{A}\mathcal{T}_{q,\alpha}^{p}.
	$$
\end{lemma}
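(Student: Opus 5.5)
The plan is to reduce the equivalence to comparing multiplier sequences. On a fixed space $\mathcal{A}\mathcal{T}_{q,\alpha}^{p}$, every coefficient multiplier $\lambda=\{\lambda_n\}$ satisfying a Mikhlin/Marcinkiewicz-type smoothness condition should act boundedly. Write $g=\mathfrak{I}_t^{\mathrm{H}}f$. Then
\[
\mathfrak{I}_t^{\mathrm{F}}f=M_{\lambda}g,\qquad \lambda_n=\Bigl(\frac{n}{n+1}\Bigr)^{t}\ (n\ge1),
\]
and
\[
\mathfrak{I}_t^{\mathrm{RL}}f=M_{\mu}g,\qquad \mu_n=\frac{\Gamma(n+1)\,n^{t}}{\Gamma(n+1+t)}.
\]
The finitely many low coefficients are treated separately. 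They only change $g$ by a polynomial, and polynomials lie in every $\mathcal{A}\mathcal{T}_{q,\alpha}^{p}$ because $\alpha>-2$. In the RL case with $t$ a negative integer, some $\mu_n$ vanish or the formula degenerates. Those indices are again finitely many, and we handle them the same way, modifying by polynomials.

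By Stirling's expansion, both $\lambda_n$ and $\mu_n$, together with their reciprocals, are of the form $\varphi(1/n)$ for large $n$, where $\varphi$ is smooth (indeed analytic) near $0$ and $\varphi(0)=1$. Hence they satisfy difference estimates of the form
\[
|\Delta^k\lambda_n|\lesssim n^{-k}\quad\text{for all } k .
\]
So it suffices to show one thing: a multiplier $M_\lambda$ with such symbol-type estimates is bounded on $\mathcal{A}\mathcal{T}_{q,\alpha}^{p}$. Applying this to $\lambda$, $\mu$ and their inverses then gives all the equivalences, including $f$ versus $g$ in both directions.

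For this boundedness we follow exactly the scheme of the proof of Lemma \ref{invert}. By Lemma \ref{Tent V_n biaoda}, the norm is equivalent to the square-function-type quantity
\[
\int_{\mathbb{T}}\Bigl(\sum_n 2^{-n(\alpha+2)}|V_n*g(\xi)|^{q}\Bigr)^{p/q}|d\xi|.
\]
The key pointwise estimate is the analogue of Lemma \ref{V_n frac estimate}:
\[
|V_n*M_\lambda g|\lesssim \sigma_*^{m}(V_n*g).
\]
We expect to prove it as Pavlović does. Write $M_\lambda(V_n*g)=W_n*(V_n*g)$, where $W_n$ is a smooth cutoff of $\lambda$ to the block $(2^{n-2},2^{n+2})$. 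Then summation by parts $m+1$ times expresses this through Cesàro means of $V_n*g$, with coefficients controlled by
\[
\sum_j |\Delta^{m+1}(\lambda_j\chi_n(j))|\,(j+1)^{m}\lesssim 1 .
\]
This bound uses exactly the symbol estimates above, because on the block $j\sim 2^n$. Lemma \ref{cesaro max} then removes $\sigma_*^m$ in the mixed $L^p(\ell^q)$ norm, and the result is the bounded inequality.

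The main obstacle is this pointwise Cesàro-majorization step. It requires choosing $m$ large, namely $m>\max\{0,1/p-1,1/q-1\}$, and verifying the $(m+1)$-th difference bounds uniformly in $n$. For the RL symbol with complex $t$ this needs a careful asymptotic expansion of $\Gamma(n+1)/\Gamma(n+1+t)$. Alternatively, one can shortcut this step by citing the general multiplier estimate implicit in Lemma \ref{V_n frac estimate}, since $\lambda$ differs from $n^{0}$ by a symbol of order $0$.
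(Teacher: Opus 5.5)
The paper gives no argument for this lemma: it is imported from \cite{Guo3} and used as a black box. Your proposal is therefore a genuinely different, self-contained route, and it is correct in outline.

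In effect you prove a Mikhlin-type multiplier theorem on $\mathcal{A}\mathcal{T}_{q,\alpha}^{p}$: every $\lambda_n=\psi(n)$ with $|\psi^{(k)}(x)|\lesssim x^{-k}$ acts boundedly, for all $0<p,q<\infty$ and $\alpha>-2$. You do this with the same machinery the paper uses for Lemma \ref{invert}:
\begin{itemize}
\item the $V_n$-characterization, in its correct form with $|V_n*f|^q$ (the exponent $q$ is missing from the displayed Lemma \ref{Tent V_n biaoda});
\item an order-$(m+1)$ Abel summation giving $|V_n*M_\lambda g|\lesssim\sigma_*^m(V_n*g)$ uniformly in $n$;
\item Oswald's inequality, Lemma \ref{cesaro max}.
\end{itemize}
The step you flag as the main obstacle is routine. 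The discrete Leibniz rule gives $|\Delta^{m+1}(\lambda\chi_n)_j|\lesssim 2^{-n(m+1)}$ on a block of length $\asymp 2^n$, and that is all the summation by parts needs.

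Your route buys self-containedness and a stronger statement. The sequences $n^{\pm i\sigma}$, $(n+1)^{\pm i\sigma}$ and $\Gamma(n+1)/\Gamma(n+1\pm i\sigma)$ are themselves order-zero symbols. So the same multiplier theorem gives Lemma \ref{invert} directly for all three operator types. The paper instead has to combine Lemma \ref{invert} for $\mathfrak{I}^{\mathrm{H}}_{i\sigma}$ with the cited equivalence. The citation buys brevity and nothing more.

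Three small repairs are needed.

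First, for $\mu_n$ the function $\varphi$ is $C^\infty$ on $[0,\epsilon)$ but not analytic at $0$ when $t\notin\mathbb{Z}$. The expansion of $n^{t}\Gamma(n+1)/\Gamma(n+1+t)$ in powers of $1/n$ is only asymptotic (it diverges). This is harmless, because you only use $|\Delta^k\mu_n|\lesssim n^{-k}$. That bound follows from $\frac{d^k}{dx^k}\log\bigl(x^{t}\Gamma(x+1)/\Gamma(x+1+t)\bigr)=O(x^{-k-1})$ for $k\ge1$, which is a consequence of the asymptotics of $\Gamma'/\Gamma$ and holds for complex $t$.

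Second, for $t=-m$ the paper's convention $\mathfrak{I}^{\mathrm{RL}}_{-m}f=f^{(m)}$ is an index-shifted series, while your multiplier produces $z^mf^{(m)}$. Discarding finitely many coefficients is not enough here. You also need the trivial fact that $F\in\mathcal{A}\mathcal{T}^p_{q,\alpha}$ if and only if $z^mF\in\mathcal{A}\mathcal{T}^p_{q,\alpha}$.

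Third, the proposed shortcut through Lemma \ref{V_n frac estimate} is not available as stated, since that lemma treats only the pure powers $n^{-\gamma-i\sigma}$. Keep the Abel-summation argument.
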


\begin{lemma}[\cite{Avetisyan2012}]\label{t func trans}
		Let $b>0$, $t\in\mathbb{R}$ and $b-t>0$. Then for all $z\in\mathbb{D}$ $$
		\Big|
		\mathfrak{I}_{t}^{\mathrm{RL}}\frac{1}{(1-z)^{b}}\Big|
		\asymp \Big|
		\frac{1}{(1-z)^{b-t}}\Big|.$$
	\end{lemma}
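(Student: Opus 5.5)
The plan is to compute $\mathfrak{I}_{t}^{\mathrm{RL}}(1-z)^{-b}$ explicitly as a hypergeometric function and then apply Euler's transformation to factor out the singular part $(1-z)^{t-b}$. Since
\[
\frac{1}{(1-z)^{b}}=\sum_{n\ge0}\frac{\Gamma(n+b)}{\Gamma(b)\,n!}z^n,
\]
definition \eqref{Riemann} gives coefficients $\Gamma(n+b)/(\Gamma(b)\Gamma(n+1+t))=\frac{1}{\Gamma(1+t)}\frac{(b)_n(1)_n}{(1+t)_n\,n!}$. Hence
\[
\mathfrak{I}_{t}^{\mathrm{RL}}\frac{1}{(1-z)^{b}}=\frac{1}{\Gamma(1+t)}\,{}_2F_1(b,1;1+t;z).
\]
For $t$ a negative integer we use instead the convention $\mathfrak{I}_{-m}^{\mathrm{RL}}=d^m/dz^m$. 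There the claim is immediate, since the derivative equals $(b)_m(1-z)^{-b-m}$ and $(b)_m\neq0$ for $b>0$.

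Next, Euler's identity ${}_2F_1(a,a';c;z)=(1-z)^{c-a-a'}{}_2F_1(c-a,c-a';c;z)$ gives
\[
\mathfrak{I}_{t}^{\mathrm{RL}}\frac{1}{(1-z)^{b}}=\frac{(1-z)^{t-b}}{\Gamma(1+t)}\,g(z),\qquad g(z)={}_2F_1(1+t-b,\,t;\,1+t;\,z).
\]
The parameter excess of $g$ is $c-a-a'=b-t>0$. Its coefficients therefore behave like $n^{-(b-t)-1}$, so the series for $g$ converges absolutely and uniformly on $\overline{\mathbb{D}}$. This shows that $g$ is continuous and bounded on $\overline{\mathbb{D}}$, which yields the upper estimate $\lesssim|1-z|^{t-b}$ at once.

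For $t>0$ I would double-check the upper bound via the integral representation
\[
\mathfrak{I}_{t}^{\mathrm{RL}}f(z)=\frac{1}{\Gamma(t)}\int_0^1(1-s)^{t-1}f(sz)\,ds,
\]
together with $|1-sz|\gtrsim(1-s)+s|1-z|$. This inequality holds because $1-s\ge0$ and $\Re(1-z)>0$, so the two summands make an angle at most $\pi/2$. Integrating gives $\lesssim|1-z|^{t-b}$ precisely when $b>t$.

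For the lower bound, Gauss's summation formula gives
\[
g(1)=\frac{\Gamma(1+t)\Gamma(b-t)}{\Gamma(b)\Gamma(1)}\neq0.
\]
By continuity, $|g|\gtrsim1$ on $\overline{\mathbb{D}}\cap\{|1-z|<\delta\}$, which gives the lower bound near the singular point $z=1$. On the complementary region $|1-z|\ge\delta$, the right-hand side $|1-z|^{t-b}$ is bounded above and below. So it remains to show that $|g|$, equivalently $\mathfrak{I}_{t}^{\mathrm{RL}}(1-z)^{-b}$, stays away from $0$ there.

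I expect this nonvanishing away from $z=1$ to be the main obstacle. Continuity and compactness reduce it to showing that the hypergeometric function has no zeros on $\overline{\mathbb{D}}$. My first attempt would be a positivity argument: show $\Re\big[(1-z)^{b-t}\,\mathfrak{I}_t^{\mathrm{RL}}(1-z)^{-b}\big]>0$ using the integral representation, which writes the integrand as $(1-s)^{t-1}\big((1-z)/(1-sz)\big)^{b}$ times $(1-z)^{-t}$. For $0<b-t$ small the arguments are controlled. For general parameters I would reduce to that range by composing fractional operators and differentiating, using the index-shift relation $\frac{d}{dz}\mathfrak{I}_{t+1}^{\mathrm{RL}}=\mathfrak{I}_{t}^{\mathrm{RL}}\frac{d}{dz}$, which holds at the coefficient level. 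If a global zero-free statement proves elusive, the estimate near $z=1$, which is what all applications to test functions actually use, is already fully established by the argument above.
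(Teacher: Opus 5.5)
Your hypergeometric computation is correct as far as it goes. You have $\mathfrak{I}_t^{\mathrm{RL}}(1-z)^{-b}=\Gamma(1+t)^{-1}(1-z)^{t-b}g(z)$ with $g={}_2F_1(1+t-b,t;1+t;z)$, and $g$ converges absolutely on $\overline{\mathbb{D}}$ because the excess is $b-t>0$. Gauss's formula gives $g(1)=\Gamma(1+t)\Gamma(b-t)/\Gamma(b)\neq0$. Together these rigorously give the upper bound on all of $\mathbb{D}$ and the lower bound on $\mathbb{D}\cap\{|1-z|<\delta\}$.

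The gap is the step you flagged, and it cannot be filled. $g$ need not be zero-free on $\overline{\mathbb{D}}$, so the lower bound for all $z\in\mathbb{D}$, as the lemma is written, is false. Take $t=1$, $b=8$. Since $\mathfrak{I}_1^{\mathrm{RL}}f(z)=z^{-1}\int_0^z f(w)\,dw$,
\[
\mathfrak{I}_1^{\mathrm{RL}}\frac{1}{(1-z)^{8}}=\frac{(1-z)^{-7}-1}{7z},\qquad\text{i.e.}\qquad g(z)=\frac{1-(1-z)^{7}}{7z}.
\]
This vanishes at $z_0=1-e^{2\pi i/7}$, where $|z_0|=2\sin(\pi/7)<2\sin(\pi/6)=1$, while $|1-z_0|^{-7}=1$. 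For $t=1$ the same failure occurs for every $b>7$.

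So your positivity argument cannot succeed in general, and the planned reduction of arbitrary $(b,t)$ to small $b-t$ must break down. Incidentally, the identity $\frac{d}{dz}\mathfrak{I}^{\mathrm{RL}}_{t+1}=\mathfrak{I}^{\mathrm{RL}}_t\frac{d}{dz}$ you invoke is wrong for this normalization: on $z^n$ the two sides differ by the factor $n/((n+t)(n+1+t))$.

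The correct conclusion is your last sentence, stated as the lemma rather than as a fallback:
\[
|\mathfrak{I}_t^{\mathrm{RL}}(1-z)^{-b}|\lesssim|1-z|^{t-b}\ \text{on } \mathbb{D},\qquad |\mathfrak{I}_t^{\mathrm{RL}}(1-z)^{-b}|\gtrsim|1-z|^{t-b}\ \text{on } \mathbb{D}\cap\{|1-z|<\delta\},
\]
for some $\delta=\delta(b,t)>0$. This is all the paper needs, for instance in the necessity part of Theorem~\ref{main thm}(i). Since $\beta>-2$, bounded functions lie in $\mathcal{A}\mathcal{T}^v_{u,\beta}$. Moreover, your two bounds give $|1-z|^{t-\gamma}\lesssim|\mathfrak{I}_t^{\mathrm{RL}}f_\gamma(z)|+1$ on all of $\mathbb{D}$. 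Hence $\mathfrak{I}_t^{\mathrm{RL}}f_\gamma\in\mathcal{A}\mathcal{T}^v_{u,\beta}$ still forces $(1-z)^{t-\gamma}\in\mathcal{A}\mathcal{T}^v_{u,\beta}$.
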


	\begin{lemma}[\cite{Guo3, pm2019}]\label{tent frac trans}
		Let $0<p<\infty$, $0<q < \infty$ and $f \in H(\mathbb{D})$. If $t \in \mathbb{R}$, $\alpha>-2$ and $\alpha-t q>-2$, then $$f \in \mathcal{A}\mathcal{T}_{q, \alpha}^p \Leftrightarrow \mathfrak{I}_t f \in \mathcal{A}\mathcal{T}_{q, \alpha-t q}^p,$$ where $\mathfrak{I}_t$ is one of $\mathfrak{I}_t^\mathrm{H}$, $\mathfrak{I}_t^\mathrm{F}$ and $\mathfrak{I}_t^{\mathrm{RL}}$.
	\end{lemma}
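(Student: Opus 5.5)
Since $\alpha>-2$ and $\alpha-tq>-2$, the claim is the tent-space version of the classical fact that one derivative costs one power of $(1-|z|)$. I would first prove it for the Hadamard operator and then transfer it to the other two. For the transfer, note that Lemma \ref{equavilence} with the target space $\mathcal{A}\mathcal{T}^p_{q,\alpha-tq}$ shows the right-hand membership is the same for $\mathfrak{I}_t^{\mathrm{H}}$, $\mathfrak{I}_t^{\mathrm{F}}$ and $\mathfrak{I}_t^{\mathrm{RL}}$. The condition $f\in\mathcal{A}\mathcal{T}^p_{q,\alpha}$ does not involve $t$. So the statement reduces to $\mathfrak{I}^{\mathrm{H}}_t$. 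Since $\mathfrak{I}^{\mathrm{H}}_t$ ignores the constant term, I may also assume $f(0)=0$, adding constants back at the end; this is harmless because constants lie in every $\mathcal{A}\mathcal{T}^p_{q,\gamma}$ with $\gamma>-2$.

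For the Hadamard case I would use the dyadic decomposition $f=\sum_n V_n*f$ together with the Littlewood--Paley-type characterization of Lemma \ref{Tent V_n biaoda}. I read that lemma, with the exponent $q$ restored, as
\[
\|g\|_{\mathcal{A}\mathcal{T}^p_{q,\gamma}}^p\asymp\int_{\mathbb{T}}\Big(\sum_n 2^{-n(\gamma+2)}|V_n*g(\xi)|^q\Big)^{p/q}|d\xi|,
\qquad \gamma>-2.
\]
The left side there is a radial integral, not the cone integral. I would identify the two, either as the content of that lemma or via the standard equivalence between radial and non-tangential integrals for analytic functions. Apply this with $g=\mathfrak{I}^{\mathrm{H}}_tf$ and $\gamma=\alpha-tq$, which is legitimate since $\alpha-tq>-2$. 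Multipliers commute, so $V_n*\mathfrak{I}^{\mathrm{H}}_tf=\mathfrak{I}^{\mathrm{H}}_t(V_n*f)$. Lemma \ref{V_n frac estimate} with $\sigma=0$ and $\gamma=-t$ gives the pointwise bound
\[
|V_n*\mathfrak{I}^{\mathrm{H}}_tf|\lesssim 2^{-nt}\sigma^m_*(V_n*f).
\]
Therefore
\[
2^{-n(\alpha-tq+2)}|V_n*\mathfrak{I}^{\mathrm{H}}_tf|^q\lesssim 2^{-n(\alpha+2)}\big(\sigma^m_*(V_n*f)\big)^q.
\]
The Ces\`aro maximal inequality, Lemma \ref{cesaro max}, with $m$ chosen large, removes $\sigma^m_*$ in the mixed $L^p(\ell^q)$ norm. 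Reading Lemma \ref{Tent V_n biaoda} backwards with $\gamma=\alpha$ then gives $\|\mathfrak{I}^{\mathrm{H}}_tf\|_{\mathcal{A}\mathcal{T}^p_{q,\alpha-tq}}\lesssim\|f\|_{\mathcal{A}\mathcal{T}^p_{q,\alpha}}$.

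For the converse I would use the second inequality of Lemma \ref{V_n frac estimate}, namely
\[
|V_n*f|\lesssim 2^{nt}\sigma^m_*(\mathfrak{I}^{\mathrm{H}}_tV_n*f),
\]
and run the same chain in reverse. This gives two-sided norm equivalence and hence the stated equivalence of memberships. The argument is the same as in the proof of Lemma \ref{invert}, but with a real shift of order.

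The main obstacle is justifying the use of Lemma \ref{Tent V_n biaoda} for both weights, since it is stated with the radial integral and the exponent on $|V_n*f|$ is garbled. Two points need care. First, the radial-type integral must be equivalent to the admissible cone integral defining $\mathcal{A}\mathcal{T}^p_{q,\gamma}$, for every $\gamma>-2$; this is exactly where $\alpha-tq>-2$ is used. Second, the dyadic block $V_n*f$ must control $f$ on the annulus $1-|z|\asymp 2^{-n}$. If the lemma is unavailable in the needed form, I would fall back on the case $t=-k$ with $k$ a positive integer, using the known derivative characterization of these spaces. I would then interpolate in $t$ using Lemma \ref{invert} for the imaginary part, composing real shifts with $\mathfrak{I}_{i\sigma}$, and treat non-integer $t$ by splitting $t=-k+s$ with $0\le s<1$.
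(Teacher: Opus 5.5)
The paper only cites this lemma, but your argument is correct and is exactly the mechanism of its proof of Lemma \ref{invert}, run with a real shift instead of an imaginary one: Lemma \ref{tent-radial-description} and Lemma \ref{Tent V_n biaoda} (read with $|V_n*f|^q$) for both weights $\alpha,\alpha-tq>-2$, then the blockwise multiplier bound, then Lemma \ref{cesaro max}, with the factor $2^{\mp ntq}$ converting $2^{-n(\alpha-tq+2)}$ into $2^{-n(\alpha+2)}$. So the interpolation fallback is unnecessary. The one thing to tidy is how you obtain the two multiplier bounds $|\mathfrak{I}^{\mathrm{H}}_tV_n*f|\lesssim 2^{-nt}\sigma^m_*(V_n*f)$ and $|V_n*f|\lesssim 2^{nt}\sigma^m_*(\mathfrak{I}^{\mathrm{H}}_tV_n*f)$; state them directly (they hold because $k^{-t}\asymp 2^{-nt}$ on the spectrum of $V_n$), rather than substituting ``$\gamma=-t$'' into Lemma \ref{V_n frac estimate}, since as printed that lemma carries the fractional-derivative sign and the substitution would literally bound $\mathfrak{I}^{\mathrm{H}}_{-t}$.
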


	\begin{lemma}[\cite{Guo1}]\label{Fbc trans}
		Let $0<b, c<\infty$, $t \in \mathbb{R}$, $b-t>0$ and $$f_{b, c}(z)=\frac{1}{(1-z)^b}\left(\log \frac{e}{1-z}\right)^{-c}.$$ Then for all $z \in \mathbb{D}$,
		$$
		\left|\mathfrak{I}_t^{\mathrm{RL}} f_{b, c}(z)\right| \asymp\left|f_{b-t, c}(z)\right|.
		$$
	\end{lemma}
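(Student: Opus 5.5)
The statement to prove is Lemma \ref{Fbc trans}: $|\mathfrak{I}_t^{\mathrm{RL}} f_{b,c}(z)|\asymp |f_{b-t,c}(z)|$ on $\mathbb{D}$. I would follow the scheme that gives Lemma \ref{t func trans} for pure powers, with the logarithmic factor carried along as a slowly varying perturbation.

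\textbf{Step 1: coefficient asymptotics.} Write $f_{b,c}(z)=\sum_n A_n z^n$. By Flajolet--Odlyzko singularity analysis, which applies because $f_{b,c}$ is analytic in a $\Delta$-domain at $1$ and has an algebraic--logarithmic singularity there,
\[
A_n=\frac{n^{b-1}}{\Gamma(b)}(\log n)^{-c}\bigl(1+O(1/\log n)\bigr).
\]
Applying $\mathfrak{I}^{\mathrm{RL}}_t$ multiplies $A_n$ by $\Gamma(n+1)/\Gamma(n+1+t)=n^{-t}(1+O(1/n))$. The new coefficients are therefore
\[
B_n=\frac{\Gamma(b-t)}{\Gamma(b)}\,C_n\bigl(1+O(1/\log n)\bigr),
\]
where $C_n$ denotes the coefficients of $f_{b-t,c}$; here we use $b-t>0$. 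So $\mathfrak{I}^{\mathrm{RL}}_t f_{b,c}=\kappa f_{b-t,c}+g$ with $\kappa\neq 0$, and the coefficients of $g$ are $O(n^{b-t-1}(\log n)^{-c-1})$.

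\textbf{Step 2: the remainder.} This step is the main obstacle, because the remainder must be smaller than $|f_{b-t,c}|$ on the whole disk and not only radially. Coefficient bounds alone give only $|g(z)|\lesssim (1-|z|)^{-(b-t)}(\log\frac{e}{1-|z|})^{-c-1}$. That is weaker than what is needed when $z$ tends to $1$ tangentially, where $|1-z|\gg 1-|z|$.

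To avoid this I would argue at the level of functions instead. Use the integral representation
\[
\mathfrak{I}^{\mathrm{RL}}_t h(z)=\frac{1}{\Gamma(t)}\int_0^1 (1-s)^{t-1}h(sz)\,ds \quad (t>0),
\]
and for $t\le 0$ write $\mathfrak{I}_t=\mathfrak{I}_{t+m}\circ D^m$, where $D^m$ is a derivative-type operator. Direct differentiation gives
\[
f_{b,c}'=b\,f_{b+1,c}+c\,f_{b+1,c+1},
\]
so derivatives reproduce the family with lower-order corrections. The correction terms $f_{\cdot,c+1}$ are uniformly $o(|f_{\cdot,c}|)$ near $1$, since $|\log\frac{e}{1-z}|\gtrsim \log\frac{e}{|1-z|}$ on $\mathbb{D}$. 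With this reduction we only need positive $t$.

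For $t>0$, substitute $h=f_{b,c}$ and split the integral at $1-s\approx|1-z|$. On the part where $1-s\lesssim|1-z|$ we have $|1-sz|\asymp|1-z|$, and the log factor is comparable to its value at $z$. On the part where $1-s\gtrsim|1-z|$ we have $|1-sz|\asymp 1-s$, and the integral of $(1-s)^{t-1-b}(\log\frac{e}{1-s})^{-c}$ is comparable to $|1-z|^{t-b}(\log\frac{e}{|1-z|})^{-c}$, using $b-t>0$. Both parts give the upper bound $|f_{b-t,c}(z)|$.

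\textbf{Step 3: the lower bound.} The lower bound is more delicate because of possible cancellation in the complex integral. I would obtain it by comparing with the model function: the argument of $(1-sz)^{-b}$ varies by at most a bounded angle less than $\pi/2$ on the main range. Alternatively, combine Step 1 (the $\kappa f_{b-t,c}$ term dominates wherever $|z-1|$ is small, since $g$ is handled by the same splitting with an extra $\log^{-1}$ gain) with compactness: away from $z=1$, both sides are continuous and nonvanishing on $\overline{\mathbb{D}}\setminus\{1\}$. Nonvanishing of $\mathfrak{I}_t f_{b,c}$ away from $1$ needs care; for the lemma as used, I would concede that one may need to restrict to a neighborhood of $1$ or appeal to \cite{Guo1} directly.
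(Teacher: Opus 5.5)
The paper gives no proof of this lemma; it is quoted from \cite{Guo1}, so your argument has to stand on its own. The upper-bound half essentially does. The facts $|1-sz|\asymp(1-s)+|1-z|$ and $|\log\frac{e}{1-w}|\asymp\log\frac{e}{|1-w|}$ on $\mathbb{D}$ make the splitting in Step~2 work on the whole disk. The reduction $\mathfrak{I}^{\mathrm{RL}}_t=\mathfrak{I}^{\mathrm{RL}}_{t+m}\circ\prod_{j=1}^{m}\bigl(z\tfrac{d}{dz}+t+j\bigr)$ is also legitimate (note $f_{b,c}'=bf_{b+1,c}-cf_{b+1,c+1}$, with a minus sign).

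The gap is the lower bound, and it has two layers. First, as stated for all $z\in\mathbb{D}$, the lower bound is not merely hard but false, so your closing concession is forced rather than optional. With $L(z)=\log\frac{e}{1-z}$ one has $f_{b,c}'=f_{b+1,c}\,(b-c/L)$. This vanishes at $z_0=1-e^{1-c/b}\in\mathbb{D}$ whenever $c/b>1-\log 2$. For example, take $b=c$ and $t=-1$, where the paper sets $\mathfrak{I}^{\mathrm{RL}}_{-1}f=f'$: then $\mathfrak{I}^{\mathrm{RL}}_{-1}f_{b,b}(0)=0$ while $f_{b+1,b}(0)=1$. For non-integer $t\to-1$, the coefficient formula gives $\mathfrak{I}^{\mathrm{RL}}_tf_{b,c}\to zf_{b,c}'$ locally uniformly. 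By Hurwitz's theorem, $\mathfrak{I}^{\mathrm{RL}}_tf_{b,c}$ then has zeros near $0$ for every $b,c>0$. What is true is the upper bound on $\mathbb{D}$ together with the two-sided estimate for $|1-z|<\delta(b,c,t)$. That local version is all that Lemma~\ref{Fbc tent} uses.

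Second, Step~3 does not prove even this local lower bound. The range $1-s\le\delta|1-z|$, where $\arg(1-sz)$ is nearly constant, carries only a fraction $O(\delta^{t})$ of the integral. Because $b>t$, the mass sits at $1-s\asymp|1-z|$. There $\arg(1-sz)$ moves between $\arg(1-z)$ and about $0$, so under tangential approach the argument of $(1-sz)^{-b}$ sweeps an interval of length close to $b\pi/2$. This exceeds $\pi/2$ once $b>1$. The alternative through Step~1 is circular: $g$ is known only through its coefficients, which you yourself noted give no tangential control, and it has no integral representation to split.

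A route that works for $t>0$ goes as follows.
\begin{itemize}
\item On $1-s\le K|1-z|$ one has $|L(sz)-L(z)|=|\log\frac{1-z}{1-sz}|\le\log(2+2K)+\pi$. Hence $L(sz)^{-c}=L(z)^{-c}\bigl(1+O(\log K/|L(z)|)\bigr)$.
\item Once $|1-z|\le K^{-2}$, the part $1-s>K|1-z|$ contributes $O(K^{t-b})\,|f_{b-t,c}(z)|$. This holds both for $f_{b,c}$ and for the comparison term $L(z)^{-c}(1-sz)^{-b}$.
\item The pure power is explicit:
\[
\mathfrak{I}^{\mathrm{RL}}_t(1-z)^{-b}=\frac{(1-z)^{t-b}}{\Gamma(1+t)}\,{}_2F_1(1+t-b,t;1+t;z).
\]
Here the ${}_2F_1$ has coefficients $O(n^{t-b-1})$, so it is continuous on $\overline{\mathbb{D}}$ with value $\Gamma(1+t)\Gamma(b-t)/\Gamma(b)\neq0$ at $z=1$.
\end{itemize}
Taking $K$ large and then $|1-z|$ small gives $\mathfrak{I}^{\mathrm{RL}}_tf_{b,c}(z)=\frac{\Gamma(b-t)}{\Gamma(b)}f_{b-t,c}(z)(1+o(1))$ as $z\to1$ in $\mathbb{D}$, uniformly in the direction of approach. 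Your $D^m$ reduction then transfers this to $t\le0$, since all correction terms are $o(|f_{b-t,c}|)$ near $1$. Step~1 is not needed.
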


\begin{lemma}[ \cite{Guo3, pm2019}]\label{Fractional Calderon}
		If $p, \alpha \in (0, \infty)$,  $f \in H(\mathbb{D})$ with $f(0)=0$, then
		\begin{align}\label{g func equa}
			\|f\|_{H^p}^p \asymp \int_\mathbb{T}\left(\int_{\Gamma(\xi)}\left|\mathfrak{I}_{-\alpha}^\mathrm{F}f(z)\right|^2\left(1-|z|^2\right)^{2 \alpha-2} d A(z)\right)^{p/2}|d \xi|.
		\end{align}
	\end{lemma}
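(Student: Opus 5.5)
We need to prove Lemma \ref{Fractional Calderon}: for $p,\alpha\in(0,\infty)$ and $f\in H(\mathbb{D})$ with $f(0)=0$,
\[
\|f\|_{H^p}^p\asymp\int_{\mathbb{T}}\Bigl(\int_{\Gamma(\xi)}|\mathfrak{I}^{\mathrm{F}}_{-\alpha}f(z)|^2(1-|z|^2)^{2\alpha-2}\,dA(z)\Bigr)^{p/2}|d\xi|.
\]
The plan is to reduce the fractional statement to the classical Lusin area-integral characterization of $H^p$. In the notation of the paper, the right-hand side is exactly $\|\mathfrak{I}^{\mathrm{F}}_{-\alpha}f\|^p_{\mathcal{AT}^p_{2,2\alpha-2}}$. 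So the claim amounts to $f\in H^p \Leftrightarrow \mathfrak{I}^{\mathrm{F}}_{-\alpha}f\in\mathcal{AT}^p_{2,2\alpha-2}$, with comparable norms.

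The anchor case is $\alpha=1$. Here $\mathfrak{I}^{\mathrm{F}}_{-1}f=(zf)'$, and the classical Calder\'on area theorem (valid for all $0<p<\infty$) gives
\[
\|f\|_{H^p}^p\asymp|f(0)|^p+\int_{\mathbb{T}}\Bigl(\int_{\Gamma(\xi)}|f'(z)|^2\,dA(z)\Bigr)^{p/2}|d\xi|,
\]
that is, $\|f\|_{H^p}\asymp\|f'\|_{\mathcal{AT}^p_{2,0}}$ when $f(0)=0$. Since $(zf)'=f+zf'$, I will pass from $f'$ to $(zf)'$ in two steps:
\begin{itemize}
\item use the norm form of Lemma \ref{equavilence}, or equivalently the identity $\mathfrak{I}^{\mathrm{RL}}_{-1}=d/dz$ together with the equivalence lemma;
\item then absorb the term $f$, whose tent norm $\|f\|_{\mathcal{AT}^p_{2,0}}$ is controlled by $\|f'\|_{\mathcal{AT}^p_{2,0}}$ through Lemma \ref{tent frac trans}.
\end{itemize}
This gives $\|f\|_{H^p}\asymp\|\mathfrak{I}^{\mathrm{F}}_{-1}f\|_{\mathcal{AT}^p_{2,0}}$.

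For general $\alpha>0$, set $g=\mathfrak{I}^{\mathrm{F}}_{-1}f$. Then $\mathfrak{I}^{\mathrm{F}}_{-\alpha}f=\mathfrak{I}^{\mathrm{F}}_{1-\alpha}g$, because the Flett operators compose additively in the order. Apply Lemma \ref{tent frac trans} to $g\in\mathcal{AT}^p_{2,0}$ with $t=1-\alpha$. The target weight is $0-2(1-\alpha)=2\alpha-2$, and the hypothesis $2\alpha-2>-2$ holds exactly because $\alpha>0$. The lemma then gives
\[
g\in\mathcal{AT}^p_{2,0}\Leftrightarrow\mathfrak{I}^{\mathrm{F}}_{1-\alpha}g\in\mathcal{AT}^p_{2,2\alpha-2}.
\]
This lemma is stated as a set equivalence, so I need its norm version. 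I would get it either from the closed graph theorem on these quasi-Banach spaces, with continuity of coefficient functionals, or by checking the dyadic proof. The dyadic route uses Lemma \ref{Tent V_n biaoda} with $q=2$ together with Lemma \ref{V_n frac estimate} for real order $\gamma=1-\alpha$ and Lemma \ref{cesaro max}. This mirrors the proof of Lemma \ref{invert}, except that the factor $2^{n\gamma}$ now shifts the weight $2^{-n(\alpha+2)}$ by exactly $2^{2n\gamma}$.

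I expect the main obstacle to be the bookkeeping of constants and the low coefficients. Two points need care:
\begin{itemize}
\item $\mathfrak{I}^{\mathrm{H}}$ and $\mathfrak{I}^{\mathrm{F}}$ differ at $n=0$ and by bounded multipliers $(n/(n+1))^{t}$. The condition $f(0)=0$ removes the constant term, and the remaining multiplier is handled by the equivalence lemma.
\item The Calder\'on theorem must hold in the full range $0<p\le1$. This is classical (Fefferman--Stein) and I will cite it rather than reprove it.
\end{itemize}
Alternatively, the direct dyadic route avoids these issues. Littlewood--Paley theory gives $\|f\|_{H^p}\asymp\|(\sum_n|V_n*f|^2)^{1/2}\|_{L^p}$. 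Applying Lemma \ref{V_n frac estimate} with $\gamma=-\alpha$ turns this into the weighted dyadic sum of Lemma \ref{Tent V_n biaoda} for $\mathfrak{I}_{-\alpha}f$ with weight $2^{-2n\alpha}$, which corresponds to $\alpha+2=2\alpha$ in that lemma's parametrization, that is, weight exponent $2\alpha-2$. I would present this dyadic argument as the main proof.
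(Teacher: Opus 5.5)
The paper gives no proof of this lemma; it simply imports it from \cite{Guo3, pm2019}. Your proposal is correct. The dyadic argument you designate as the main proof is essentially the argument behind the cited result in \cite{Pavlovic2013, pm2019}. It is the proof of Lemma \ref{invert} run with the real order $-\alpha$, combined with the square-function characterization $\|f\|_{H^p}\asymp\|(\sum_n|V_n*f|^2)^{1/2}\|_{L^p(\mathbb{T})}$, $0<p<\infty$. After that, Lemma \ref{Tent V_n biaoda} (with $q=2$ and weight exponent $2\alpha-2>-2$, which is exactly where $\alpha>0$ enters) and Lemma \ref{tent-radial-description} turn the weighted dyadic sum into the cone integral.

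Two reading notes on that route:
\begin{itemize}
\item Taken literally with $\gamma=-\alpha$, Lemma \ref{V_n frac estimate} as printed gives the factor $2^{-n\alpha}$ for $\mathfrak{I}_{-\alpha}$, which points the wrong way. The correct form is Pavlovi\'c's, stated for the fractional derivative $\mathfrak{I}^{\mathrm{F}}_{-\alpha}$ (multiplier $(k+1)^{\alpha}$): $|\mathfrak{I}^{\mathrm{F}}_{-\alpha}(V_n*f)|\lesssim 2^{n\alpha}\sigma^m_*(V_n*f)$, together with its reverse. That is what your weight $2^{-2n\alpha}$ actually uses, and it also makes a Hadamard/Flett conversion unnecessary.
\item Lemma \ref{Tent V_n biaoda} must be read with $|V_n*f|^q$ in the sum.
\end{itemize}
Also, the square-function theorem for $p\le 1$ is of the same depth as Calder\'on's area theorem. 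So this route repackages the Hardy-space input rather than avoiding it.

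Your first route is genuinely different and also valid:
\begin{itemize}
\item You anchor at $\alpha=1$, where $\mathfrak{I}^{\mathrm{F}}_{-1}f=(zf)'$, using Calder\'on's theorem.
\item You transport the estimate via $\mathfrak{I}^{\mathrm{F}}_{-\alpha}=\mathfrak{I}^{\mathrm{F}}_{1-\alpha}\mathfrak{I}^{\mathrm{F}}_{-1}$ and Lemma \ref{tent frac trans}, moving from $\mathcal{AT}^p_{2,0}$ to $\mathcal{AT}^p_{2,2\alpha-2}$.
\item You apply the closed graph theorem on these $s$-Banach spaces (Fact 1 plus continuity of Taylor coefficient functionals) to the multiplier and its inverse. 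This upgrades the set equivalences of Lemmas \ref{equavilence} and \ref{tent frac trans} to norm equivalences.
\end{itemize}
Given the paper's toolkit this is shorter, but it pushes all the real work into Lemma \ref{tent frac trans}, which is itself proved with the same dyadic machinery.

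One small point in the anchor step: the bound $\|f'\|_{\mathcal{AT}^p_{2,0}}\lesssim\|(zf)'\|_{\mathcal{AT}^p_{2,0}}$ is not pure absorption. You need $\|f\|_{\mathcal{AT}^p_{2,0}}\lesssim\|(zf)'\|_{\mathcal{AT}^p_{2,2}}\le\|(zf)'\|_{\mathcal{AT}^p_{2,0}}$. This follows from Lemma \ref{tent frac trans} with $t=-1$ applied to $zf$, monotonicity in the weight, and harmless division by $z$ near the origin. Alternatively, the closed-graph form of Lemma \ref{equavilence} gives both directions at once.
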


\subsection{Radial descriptions and testing functions}
\medno We first record a radial description of analytic tent spaces, which will be used  to connect tent spaces with mixed norm spaces.
\begin{lemma}[{\cite{pm2019}}]
\label{tent-radial-description}
Let $0<p, q<\infty$, $\alpha>-2$ and $f \in H(\mathbb{D})$. Then
\[
\int_{\mathbb{T}}
\left(
\int_0^1 |f(r\xi)|^q (1-r)^{\alpha+1}\,dr
\right)^{\frac{p}{q}}
|d\xi|
\asymp
\int_{\mathbb{T}}
\left(
\int_{\Gamma(\xi)} |f(z)|^q (1-|z|)^\alpha\,dA(z)
\right)^{\frac{p}{q}}
|d\xi|.
\]
\end{lemma}

\medno The following classes of examples will serve as test functions in this paper.

\begin{lemma}\label{t function}
	Let $0<p,q < \infty$ and $\alpha>-2$. Then
	for $\gamma \in \mathbb{R}$, the function
	$$
	f(z)=\frac{1}{(1-z)^{\gamma}}
	$$
	belongs to $\mathcal{A}\mathcal{T}_{q,\alpha}^p$ if and only if $$\gamma<\frac{1}{p}+\frac{\alpha+2}{q}.$$
\end{lemma}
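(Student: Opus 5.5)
We prove Lemma \ref{t function} by working directly with the radial description of Lemma \ref{tent-radial-description}. The plan is to compute the inner integral along the radius ending at each boundary point and then integrate over $\xi$. Write $\xi=e^{i\theta}$ with $\theta\in(-\pi,\pi]$. Then, up to a smooth nonvanishing factor coming from the branch,
\[
|1-r\xi|\asymp (1-r)+|\theta|, \qquad 0\le r<1 .
\]
Hence $|f(r\xi)|^q\asymp\bigl((1-r)+|\theta|\bigr)^{-\gamma q}$. By Lemma \ref{tent-radial-description} it suffices to decide when
\[
\int_{-\pi}^{\pi}\left(\int_0^1 \bigl((1-r)+|\theta|\bigr)^{-\gamma q}(1-r)^{\alpha+1}\,dr\right)^{p/q} d\theta<\infty .
\]

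The first step is to estimate the inner integral for a fixed $\theta\neq 0$. Substitute $s=1-r$ and split the range at $s=|\theta|$.
\begin{itemize}
\item On $s<|\theta|$ the integrand is $\asymp |\theta|^{-\gamma q}s^{\alpha+1}$. Since $\alpha+1>-1$, this part contributes $\asymp|\theta|^{\alpha+2-\gamma q}$.
\item On $|\theta|<s<1$ the integrand is $\asymp s^{\alpha+1-\gamma q}$. This part contributes $\asymp |\theta|^{\alpha+2-\gamma q}$ when $\alpha+2-\gamma q<0$, $\asymp\log(e/|\theta|)$ when $\alpha+2-\gamma q=0$, and $\asymp 1$ when $\alpha+2-\gamma q>0$.
\end{itemize}
Combining the two pieces, the inner integral is comparable to $\max\{|\theta|^{\alpha+2-\gamma q},1\}$, with the extra logarithmic factor $\log(e/|\theta|)$ in the borderline case $\gamma q=\alpha+2$.

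The second step is the outer integral. There are three cases.
\begin{itemize}
\item If $\gamma q<\alpha+2$, the integrand in $\theta$ is bounded. The integral is finite, and indeed $\gamma<\frac{\alpha+2}{q}<\frac1p+\frac{\alpha+2}{q}$.
\item If $\gamma q=\alpha+2$, we integrate $\log(e/|\theta|)^{p/q}$, which is integrable near $0$. Again $\gamma<\frac1p+\frac{\alpha+2}{q}$.
\item If $\gamma q>\alpha+2$, we integrate $|\theta|^{(\alpha+2-\gamma q)p/q}$. This is finite if and only if $(\gamma q-\alpha-2)p/q<1$, that is, $\gamma<\frac1p+\frac{\alpha+2}{q}$.
\end{itemize}
Putting the cases together, the function belongs to $\mathcal{A}\mathcal{T}^p_{q,\alpha}$ exactly when $\gamma<\frac1p+\frac{\alpha+2}{q}$. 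The case $\gamma\le 0$ is covered as well, since then $f$ is bounded and is trivially in the space.

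The only mildly delicate point is justifying the two-sided comparison $|1-r e^{i\theta}|\asymp (1-r)+|\theta|$ uniformly in $r$ and $\theta$, together with the fact that the constants in Lemma \ref{tent-radial-description} are uniform. Both are standard, so I expect no real obstacle beyond keeping track of the borderline logarithmic case.
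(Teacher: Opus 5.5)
Your argument is correct, but it takes a somewhat different route from the paper. The paper omits this proof, saying it is similar to that of Lemma \ref{Fbc tent}. There the weight is first removed by fractional integration: Lemma \ref{tent frac trans} with $t=\frac{\alpha+1}{q}$ reduces membership in $\mathcal{A}\mathcal{T}^p_{q,\alpha}$ to membership of $\mathfrak{I}^{\mathrm{RL}}_{(\alpha+1)/q}f$ in $\mathcal{A}\mathcal{T}^p_{q,-1}$. Lemma \ref{t func trans} then identifies $|\mathfrak{I}^{\mathrm{RL}}_{(\alpha+1)/q}f|$ with $|1-z|^{-(\gamma-\frac{\alpha+1}{q})}$. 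Finally, the resulting unweighted radial integral $\int_0^1|1-re^{i\theta}|^{-\gamma q+\alpha+1}\,dr$ is estimated by Lemma \ref{J es}.

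You instead keep the weight $(1-r)^{\alpha+1}$ and evaluate the weighted radial integral by hand, splitting at $1-r=|\theta|$. Your version is more elementary and self-contained. It needs only Lemma \ref{tent-radial-description} and the estimate $|1-re^{i\theta}|\asymp(1-r)+|\theta|$. It also treats every real $\gamma$ at once. The paper's reduction, by contrast, is carried out under the assumption $\gamma>\frac{\alpha+2}{q}$ (Lemma \ref{t func trans} needs at least $\gamma>\max\{0,\frac{\alpha+1}{q}\}$), with smaller $\gamma$ handled by monotonicity. The paper's route has the advantage of reusing exactly the machinery it needs anyway for the logarithmically perturbed functions $f_{b,c}$, where Lemma \ref{J es} already tracks the logarithmic factors. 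Your splitting would handle those factors equally easily, though.

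Two cosmetic points. For real $\gamma$ and the principal branch one has exactly $|(1-z)^{-\gamma}|=|1-z|^{-\gamma}$, so no extra factor from the branch appears. Also, your split at $s=|\theta|$ tacitly assumes $|\theta|<1$. For $1\le|\theta|\le\pi$ the inner integral is trivially $\asymp 1$, which is consistent with your formula $\max\{|\theta|^{\alpha+2-\gamma q},1\}$ up to constants.
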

\begin{proof}
    The proof of Lemma \ref{t function} is similar to that of Lemma \ref{Fbc tent} below and is therefore omitted.
\end{proof}
\medno We shall also need two facts on Hadamard's lacunary series.

\begin{lemma}[\cite{pm2019}]\label{tent lacunary}
	Let
	\begin{align}\label{lacunary formula}
		f(z)=\sum_{n=1}^{\infty}a_{n}z^{2^{n}-1}.
	\end{align}
	and let $0<p,q<\infty$, $\alpha>-2.$ Then $f\in \mathcal{A}\mathcal{T}_{q,\alpha}^{p}$ if and only if
	$$
	\left\{ 2^{-n\frac{\alpha+2}{q}}a_{n}
	\right\}_{n=0}^{\infty}\in \ell^{q}.
	$$
\end{lemma}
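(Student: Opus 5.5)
The plan is to reduce both directions to one–dimensional weighted sums, using the radial description in Lemma \ref{tent-radial-description}. That lemma says that $f\in\mathcal{A}\mathcal{T}^p_{q,\alpha}$ if and only if
\[
\int_{\mathbb{T}}F(\xi)^{p/q}\,|d\xi|<\infty,
\qquad
F(\xi)=\int_0^1|f(r\xi)|^q(1-r)^{\alpha+1}\,dr .
\]
Write $d\mu(r)=(1-r)^{\alpha+1}dr$ and $S=\sum_n 2^{-n(\alpha+2)}|a_n|^q$. Everything rests on a single dyadic estimate, valid for every $q>0$ and $\alpha>-2$:
\[
(\ast)\qquad \int_0^1\Big(\sum_{n}|a_n|\,r^{2^n-1}\Big)^q\,d\mu(r)\;\asymp\; S .
\]
The same estimate should hold with $\big(\sum_n|a_n|^2r^{2(2^n-1)}\big)^{1/2}$ in place of $\sum_n|a_n|r^{2^n-1}$.

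\textbf{Step 1: proof of $(\ast)$.} Split $[0,1)$ into the intervals $I_k=[1-2^{-k},1-2^{-k-1})$, on which $\mu(I_k)\asymp 2^{-k(\alpha+2)}$. For $r\in I_k$ we have $r^{2^n}\asymp 1$ when $n\le k$, and $r^{2^n}\le e^{-c2^{n-k}}$ when $n>k$.
\begin{itemize}
\item Lower bound: keep only the term $n=k$ on $I_k$. This gives a contribution $\gtrsim|a_k|^q2^{-k(\alpha+2)}$ from each $I_k$, and summing over $k$ gives $\gtrsim S$.
\item Upper bound: on $I_k$ the sum is $\lesssim\sum_{n\le k}|a_n|+\sum_{n>k}e^{-c2^{n-k}}|a_n|$. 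Raise this to the power $q$. For $q\le1$ use $(\sum x_j)^q\le\sum x_j^q$; for $q>1$ use H\"older with geometric weights $2^{-\varepsilon(k-n)}$, respectively $e^{-c'2^{n-k}}$. Then multiply by $2^{-k(\alpha+2)}$ and interchange the sums over $k$ and $n$.
\end{itemize}
This is a discrete Hardy inequality. It converges because $\alpha+2>0$, which makes $\sum_{k\ge n}2^{-k(\alpha+2)}\asymp2^{-n(\alpha+2)}$, and because the doubly exponential factor absorbs the growth $2^{(n-k)(\alpha+2)}$ for $n>k$. The $\ell^2$ version follows in the same way: its lower bound again comes from the single term $n=k$, and it is dominated by the $\ell^1$ version. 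I expect this bookkeeping to be the main technical obstacle, but it is routine (this is the Mateljevi\'c--Pavlovi\'c type lacunary estimate).

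\textbf{Step 2: sufficiency.} We have $|f(r\xi)|\le\sum_n|a_n|r^{2^n-1}$ uniformly in $\xi$. By $(\ast)$ this gives $F(\xi)\lesssim S$ for every $\xi$, and hence $\|f\|^p_{\mathcal{A}\mathcal{T}^p_{q,\alpha}}\lesssim S^{p/q}<\infty$.

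\textbf{Step 3: necessity.} I need the lower bound $\int_{\mathbb{T}}F^{p/q}\,|d\xi|\gtrsim S^{p/q}$, and I split into two cases.
\begin{itemize}
\item If $p\ge q$, Jensen's inequality on $\mathbb{T}$ gives $\int F^{p/q}\gtrsim\big(\int_0^1M_q^q(r,f)\,d\mu\big)^{p/q}$.
\item If $p<q$, apply Minkowski's integral inequality with exponent $q/p>1$ to $h=|f|^p$. This gives
\[
\Big(\int_0^1\Big(\int_{\mathbb{T}}|f(r\xi)|^p|d\xi|\Big)^{q/p}d\mu\Big)^{p/q}\le\int_{\mathbb{T}}F^{p/q}\,|d\xi|,
\]
so the left-hand side is controlled by $\big(\int_0^1M_p^q(r,f)\,d\mu\big)^{p/q}$.
\end{itemize}
In both cases I then invoke Zygmund's theorem for Hadamard lacunary series, $M_s(r,f)\asymp M_2(r,f)$ for every $0<s<\infty$, with constants independent of $r$. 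Since
\[
M_2(r,f)^2=\sum_n|a_n|^2r^{2(2^n-1)},
\]
the $\ell^2$ version of $(\ast)$ yields $\int_0^1M_s^q(r,f)\,d\mu\asymp S$. Therefore $f\in\mathcal{A}\mathcal{T}^p_{q,\alpha}$ forces $S<\infty$, and the norm equivalence $\|f\|^q\asymp S$ holds, independently of $p$.
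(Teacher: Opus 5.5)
Your proof is correct, but it takes a different route from the paper. The paper gives no argument of its own and simply defers to Theorem 10.27 of \cite{pm2019}.

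Your approach sandwiches the tent space between two mixed norm spaces.
\begin{itemize}
\item The pointwise bound $|f(r\xi)|\le\sum_n|a_n|r^{2^n-1}$, combined with $(\ast)$, gives $F(\xi)\lesssim S$ uniformly in $\xi$. This is an $H(\infty,q,\frac{\alpha+2}{q})$-type bound.
\item Jensen's inequality (for $p\ge q$) or Minkowski's integral inequality (for $p<q$) gives $\int_{\mathbb{T}}F^{p/q}\,|d\xi|\gtrsim\|f\|^{p}_{H(s,q,\frac{\alpha+2}{q})}$ with $s=\min\{p,q\}$.
\item For lacunary series, both quantities reduce to $S$ by Zygmund's theorem and the discrete Hardy estimate $(\ast)$.
\end{itemize}
Your bookkeeping in $(\ast)$ is sound. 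The sum $\sum_{k\ge n}2^{-k(\alpha+2)+\varepsilon q(k-n)}$ converges once $\varepsilon q<\alpha+2$, which is exactly where $\alpha>-2$ enters. The doubly exponential factor handles the terms with $n>k$.

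What your route buys:
\begin{itemize}
\item It is a self-contained, elementary proof built only from Lemma \ref{tent-radial-description} and Zygmund's theorem.
\item It yields the explicit two-sided estimate $\|f\|^q_{\mathcal{A}\mathcal{T}^p_{q,\alpha}}\asymp\sum_n2^{-n(\alpha+2)}|a_n|^q$, which makes the independence of $p$ transparent.
\item It exposes the necessity half as a pure mixed-norm statement. You could shorten that half by quoting Lemma \ref{mix lacunary} directly, since $\int_0^1M_s^q(r,f)(1-r)^{\alpha+1}\,dr=\|f\|^q_{H(s,q,\frac{\alpha+2}{q})}$ and $\frac{\alpha+2}{q}>0$.
\item The sufficiency half genuinely needs your estimate $(\ast)$, because Lemma \ref{mix lacunary} as stated does not cover the case $p=\infty$.
\end{itemize}
The paper's citation buys only brevity.

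One cosmetic point: in the case $p<q$, the left side of your Minkowski inequality is not merely controlled by $2\pi\bigl(\int_0^1M_p^q(r,f)\,d\mu\bigr)^{p/q}$; it equals it. The direction of the inequality you use is the right one.
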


\begin{proof}
	It can be deduced from Theorem 10.27 in \cite{pm2019}.
\end{proof}

\begin{lemma}[\cite{MM1}]\label{mix lacunary}
	Let $0<p, q < \infty$ and $0<\alpha<\infty$. Then the function (\ref{lacunary formula}) belongs to $H(p, q, \alpha)$ if and only if $$\left\{2^{-n\alpha} a_{n} \right\}_{n=0}^\infty\in \ell^{q}.$$
\end{lemma}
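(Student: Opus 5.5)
The lacunary characterization in $H(p,q,\alpha)$ reduces to estimating the integral means of a Hadamard lacunary series. I will compare $M_p(r,f)$ with $M_2(r,f)$ and then discretize the radial integral dyadically.

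\emph{Step 1 (reduction to $p=2$).} For a Hadamard lacunary series, Zygmund's theorem on lacunary series (Paley--Zygmund type inequalities) gives, for every $0<p<\infty$ and $0<r<1$,
\[
M_p(r,f)\asymp M_2(r,f)=\Bigl(\sum_{n\ge1}|a_n|^2 r^{2(2^n-1)}\Bigr)^{1/2},
\]
with constants depending only on $p$. Applying this to the dilations $f_r(z)=f(rz)$, which are again lacunary with coefficients $a_nr^{2^n-1}$, shows that $f\in H(p,q,\alpha)$ if and only if
\[
\int_0^1\Bigl(\sum_{n}|a_n|^2 r^{2^{n+1}}\Bigr)^{q/2}(1-r)^{\alpha q-1}\,dr<\infty .
\]
Here I replaced $r^{2(2^n-1)}$ by $r^{2^{n+1}}$, which changes nothing up to constants for $r\ge 1/2$. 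The part $r<1/2$ is harmless since $f$ is analytic.

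\emph{Step 2 (dyadic decomposition).} Split $[1/2,1)$ into the intervals $I_k=[1-2^{-k},1-2^{-k-1})$. On $I_k$ we have $(1-r)^{\alpha q-1}\,dr$ of total mass $\asymp 2^{-k\alpha q}$. We also have $r^{2^{n+1}}\asymp 1$ for $n\le k$, and $r^{2^{n+1}}\le e^{-c2^{n-k}}$ for $n>k$, with matching lower bounds of the same form. Hence the integral is comparable to
\[
\sum_k 2^{-k\alpha q}\Bigl(\sum_{n\le k}|a_n|^2+\sum_{n>k}|a_n|^2e^{-c2^{n-k}}\Bigr)^{q/2}.
\]
The lower bound, restricting to $n=k$, immediately gives $\sum_k 2^{-k\alpha q}|a_k|^q\lesssim\|f\|^q$, which is necessity.

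\emph{Step 3 (sufficiency).} This is the main obstacle, since we must control a sum over $n\le k$ using only the $n=k$ terms, and it is here that $\alpha>0$ enters. Write $b_n=2^{-n\alpha}|a_n|$. Then
\[
2^{-k\alpha}\Bigl(\sum_{n\le k}|a_n|^2\Bigr)^{1/2}=\Bigl(\sum_{n\le k}2^{-2(k-n)\alpha}b_n^2\Bigr)^{1/2}.
\]
Raised to the power $q$, this is bounded by a discrete convolution of $\{b_n^{q}\}$ with a geometrically decaying kernel. For $q\le2$ I will use $(\sum x_n)^{q/2}\le\sum x_n^{q/2}$. For $q>2$ I will use Hölder with a weight $2^{-(k-n)\alpha\epsilon}$. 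In either case Young's inequality yields a bound $\lesssim\sum_n b_n^q$, with constants finite because $\alpha>0$.

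The tail $n>k$ is handled the same way, and more easily: the factor $e^{-c2^{n-k}}2^{(n-k)\alpha}$ is summable over $n-k\ge1$ for any $\alpha$. Combining the two parts gives $\|f\|_{H(p,q,\alpha)}^q\asymp\sum_n 2^{-n\alpha q}|a_n|^q$, which proves the lemma.
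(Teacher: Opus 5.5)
Your proof is correct. The paper gives no argument for this lemma; it simply quotes it from Mateljevi\'c--Pavlovi\'c \cite{MM1}, so your proposal replaces a citation with a self-contained proof.

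Your route is the standard one. Zygmund's theorem on Hadamard gap series applies here because $2^{n+1}-1>2(2^n-1)$, so the gap ratio exceeds $2$. It gives $M_p(r,f)\asymp M_2(r,f)$ for every $0<p<\infty$, which explains why the answer does not depend on $p$. The dyadic splitting $I_k=[1-2^{-k},1-2^{-k-1})$ then turns the radial integral into a discrete Hardy-type inequality for $b_n=2^{-n\alpha}|a_n|$.

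Your version makes visible something the citation hides: the necessity direction does not use $\alpha>0$ at all, since you keep only the term $n=k$ on $I_k$. Sufficiency uses $\alpha>0$ exactly once, to sum the geometric kernel $2^{-2(k-n)\alpha}$ over the head $n\le k$. The tail $n>k$ is absorbed by the doubly exponential factor $e^{-c2^{n-k}}$ for any $\alpha$.

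Two minor remarks:
\begin{enumerate}
\item On $I_k$, the lower bound for $r^{2^{n+1}}$ when $n>k$ holds only with a constant $c'$ different from the one in the upper bound. So ``comparable to'' a single expression with one $c$ is slightly loose. This is harmless, since you use only the upper bound for sufficiency and only the terms $n\le k$ for necessity.
\item For $q>2$, you can replace the weighted H\"older step with Young's inequality in $\ell^{q/2}$, applied to the convolution of $\{b_n^2\}$ with the $\ell^1$ kernel $\{2^{-2j\alpha}\}_{j\ge0}$.
\end{enumerate}
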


\medno The corresponding facts for the second test function $f_{b, c}(z)$ are as follows, with the first one due to Avetisyan.

\begin{lemma}[\cite{Avetisyan2012}]\label{Fbc mix}
	Let $0<p,q< \infty$, $b,c>0$ and $\alpha>0$. Let
	\begin{align}\label{Fbc}
		f_{b,c}(z)=\frac{1}{(1-z)^{b}}
		\left(\log\frac{e}{1-z}\right)^{-c}.
	\end{align}
	Then  $f \in H(p,q,\alpha)$ if and only if
	$$
	\begin{cases}
		b=\alpha+\frac{1}{p},\ c>\frac{1}{q};\ or\\
		b<\alpha+\frac{1}{p}.
	\end{cases}
	$$
\end{lemma}

\begin{lemma}\label{Fbc tent}
	Let $0<p,q<\infty$,  $b,c>0$ and $\alpha>-2$. Then the function (\ref{Fbc})
	belongs to $\mathcal{A}\mathcal{T}_{q,\alpha}^p$ if and only if
	$$
	\begin{cases}
		b=\frac{\alpha+2}{q}+\frac{1}{p},\ c>\frac{1}{p};\ or\\
		b<\frac{\alpha+2}{q}+\frac{1}{p}.
	\end{cases}
	$$
\end{lemma}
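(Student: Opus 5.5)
We reduce the tent norm to a radial integral via Lemma \ref{tent-radial-description}, and then reduce everything to one-dimensional estimates near the singular point $z=1$. Write $f=f_{b,c}$ and use the standard estimate
\[
|f(z)|\asymp |1-z|^{-b}\Bigl(\log\frac{e}{|1-z|}\Bigr)^{-c},\qquad z\in\mathbb{D},
\]
which holds because $\arg(1-z)$ is bounded, so $|\log\frac{e}{1-z}|\asymp\log\frac{e}{|1-z|}$. For $\xi=e^{i\theta}$ and $0<r<1$ we have $|1-r\xi|\asymp (1-r)+|\theta|$. Hence the radial quantity
\[
G(\theta)=\int_0^1 |f(re^{i\theta})|^q(1-r)^{\alpha+1}\,dr
\]
is, up to constants, $\int_0^1 ((1-r)+|\theta|)^{-bq}\log^{-cq}\!\bigl(\tfrac{e}{(1-r)+|\theta|}\bigr)(1-r)^{\alpha+1}dr$.

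\textbf{Step 1: estimate $G(\theta)$ for small $|\theta|$.} Split the integral at $1-r=|\theta|$.

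On $1-r<|\theta|$, the weight is roughly $|\theta|^{-bq}\log^{-cq}(e/|\theta|)$ and $\int_0^{|\theta|}s^{\alpha+1}ds\asymp|\theta|^{\alpha+2}$, since $\alpha>-2$. This piece therefore contributes $|\theta|^{\alpha+2-bq}\log^{-cq}(e/|\theta|)$.

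On $1-r>|\theta|$, we get $\int_{|\theta|}^1 s^{\alpha+1-bq}\log^{-cq}(e/s)\,ds$. There are three cases.
\begin{itemize}
\item If $bq>\alpha+2$, this is again $\asymp|\theta|^{\alpha+2-bq}\log^{-cq}(e/|\theta|)$.
\item If $bq<\alpha+2$, it is bounded, and $G$ is bounded.
\item If $bq=\alpha+2$, it is a log integral, bounded when $cq>1$ and growing like a power of $\log(e/|\theta|)$ otherwise.
\end{itemize}

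\textbf{Step 2: integrate $G^{p/q}$ over $\theta$.} Away from $\theta=0$, $f$ is bounded near the boundary arc, so only small $|\theta|$ matter. If $bq\le \alpha+2$, then $G^{p/q}$ is at most a power of $\log(e/|\theta|)$, which is integrable. So $f\in\mathcal{AT}^p_{q,\alpha}$ in that case; this is consistent with the stated criterion, because $b\le(\alpha+2)/q<(\alpha+2)/q+1/p$.

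If $bq>\alpha+2$, then
\[
G^{p/q}\asymp|\theta|^{-p(b-\frac{\alpha+2}{q})}\log^{-cp}(e/|\theta|).
\]
This is integrable near $0$ if and only if either $p(b-\frac{\alpha+2}{q})<1$, or equality holds and $cp>1$. These two conditions are exactly $b<\frac{\alpha+2}{q}+\frac1p$, or $b=\frac{\alpha+2}{q}+\frac1p$ with $c>\frac1p$.

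\textbf{Main obstacle.} The only delicate point is justifying the two-sided comparison in Step 1 uniformly, in particular the lower bound on the region $1-r<|\theta|$ near the critical exponent. This is routine once $|1-re^{i\theta}|\asymp(1-r)+|\theta|$ is in hand, because the log factor is slowly varying: $\log(e/x)\asymp\log(e/y)$ whenever $x\asymp y$.

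The same computation with $c=0$ proves Lemma \ref{t function}: the case $bq\le\alpha+2$ is harmless, and otherwise membership requires $p(\gamma-\frac{\alpha+2}{q})<1$.
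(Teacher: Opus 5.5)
Your proof is correct, and it takes a more direct route than the paper.

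\textbf{The paper's route.} It first applies Lemma~\ref{tent frac trans} with $t=\frac{\alpha+1}{q}$ to move to weight exponent $-1$, i.e.\ it asks whether $\mathfrak{I}^{\mathrm{RL}}_{(\alpha+1)/q}f_{b,c}\in\mathcal{A}\mathcal{T}^p_{q,-1}$. It then replaces this function by $f_{b-\frac{\alpha+1}{q},c}$ using Lemma~\ref{Fbc trans}. After this normalization the radial integral carries no weight and is exactly $J_{bq-\alpha-2,\,cq}(\theta)$ from Lemma~\ref{J es}. That gives $|\theta|^{\alpha+2-bq}\bigl(\log\frac{e}{|\theta|}\bigr)^{-cq}$ and the same final $\theta$-integral as yours.

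\textbf{Your route.} You keep the weight $(1-r)^{\alpha+1}$ and split at $1-r=|\theta|$. The inner piece gives the lower bound directly, and the outer piece is a weighted version of $J$.

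\textbf{What each buys.} Your argument is self-contained: it uses no fractional-integration lemmas, so you never need to check the hypothesis $b>\frac{\alpha+1}{q}$ of Lemma~\ref{Fbc trans}. It also covers two things the paper leaves out. First, the range $bq\le\alpha+2$, which the paper excludes by assumption (``Here we assume $b>\frac{\alpha+2}{q}$'') and never returns to. Second, Lemma~\ref{t function} via $c=0$, including $\gamma\le 0$, whose proof the paper omits. The paper's route is shorter given the cited machinery, since after normalizing to $\alpha=-1$ the one-variable estimate can be quoted verbatim.
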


\medno The rest of this subsection is devoted to proving Lemma \ref{Fbc tent}, incorporating several extra facts that will be introduced in due course.

	\noindent In order to prove Lemma \ref{Fbc tent}, We shall need the following lemma.

	\begin{lemma}\label{J es}
	Let \(\alpha,\beta\in\mathbb R\), and define
	\[
	J_{\alpha,\beta}(\theta)
	:=
	\int_0^1 |1-re^{i\theta}|^{-\alpha-1}
	\left|\log\frac{e}{1-re^{i\theta}}\right|^{-\beta}\,dr,
	\qquad 0<|\theta|<\pi.
	\]
	Then
	\[
	J_{\alpha,\beta}(\theta)\asymp
	\begin{cases}
	|\theta|^{-\alpha}\Bigl(\log \dfrac{e}{|\theta|}\Bigr)^{-\beta}, & \alpha>0,\\[1ex]
	1, & \alpha<0,
	\end{cases}
	\]
	and, when \(\alpha=0\),
	\[
	J_{0,\beta}(\theta)\asymp
	\begin{cases}
	\Bigl(\log \dfrac{e}{|\theta|}\Bigr)^{1-\beta}, & \beta<1,\\[1ex]
	\log\log \dfrac{e}{|\theta|}, & \beta=1,\\[1ex]
	1, & \beta>1.
	\end{cases}
	\]
	The implicit constants depend only on \(\alpha\) and \(\beta\).
	\end{lemma}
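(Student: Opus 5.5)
The plan is to compare the integrand with a model function of $1-r$ and $|\theta|$. By symmetry we may take $0<\theta<\pi$. Write $w=1-re^{i\theta}$. The elementary estimate
\[
|1-re^{i\theta}|\asymp (1-r)+\theta,\qquad 0\le r\le 1,\ 0<\theta<\pi,
\]
is the starting point. Since $|w|\le 2$ and $|\arg w|<\pi/2$, we also have
\[
\Bigl|\log\frac{e}{w}\Bigr|\asymp \log\frac{e}{|w|}\ge 1,
\]
because the real part $\log(e/|w|)\ge 1-\log 2>0$ dominates the bounded imaginary part. Hence, with $s=1-r$,
\[
J_{\alpha,\beta}(\theta)\asymp \int_0^1 (s+\theta)^{-\alpha-1}\Bigl(\log\frac{e}{s+\theta}\Bigr)^{-\beta}ds,
\]
where for $s+\theta$ in a bounded range $\log\frac{e}{s+\theta}\asymp \log\frac{e'}{s+\theta}$ for a suitable constant $e'$. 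This lets us replace $2$ by a harmless normalization.

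We then split the integral into the ranges $0<s<\theta$ and $\theta<s<1$.

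On $0<s<\theta$ the integrand is comparable to $\theta^{-\alpha-1}(\log\frac{e}{\theta})^{-\beta}$. This range therefore contributes $\asymp\theta^{-\alpha}(\log\frac{e}{\theta})^{-\beta}$.

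On $\theta<s<1$ the integrand is comparable to $s^{-\alpha-1}(\log\frac{e}{s})^{-\beta}$. Substituting $x=\log(e/s)$ gives $ds/s=-dx$ and $s^{-\alpha}=e^{-\alpha}e^{\alpha x}$, so this range contributes
\[
\asymp \int_1^{L} e^{\alpha x}x^{-\beta}\,dx,\qquad L=\log\frac{e}{\theta}.
\]

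We now estimate this one-variable integral in each case.

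\begin{itemize}
\item If $\alpha>0$, it is $\asymp e^{\alpha L}L^{-\beta}$. This follows by L'Hôpital, or by noting the mass concentrates in $[L-1,L]$. Since $e^{\alpha L}L^{-\beta}\asymp \theta^{-\alpha}(\log\frac e\theta)^{-\beta}$, both ranges give the same order, which proves the case $\alpha>0$.
\item If $\alpha<0$, the integral converges as $L\to\infty$ and is bounded below by its value on $[1,2]$ when $L\ge2$, so it is $\asymp1$. The first range gives $\theta^{|\alpha|}(\log)^{-\beta}\lesssim1$. For $\theta$ near $\pi$ the whole integral is still bounded below because the integrand is bounded below on $[0,1]$. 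Hence $J\asymp1$.
\item If $\alpha=0$, the integral is $\int_1^L x^{-\beta}dx$. This is $\asymp L^{1-\beta}$ for $\beta<1$, $\asymp\log L$ for $\beta=1$, and $\asymp1$ for $\beta>1$. Meanwhile the first range contributes $L^{-\beta}$, which is dominated in every case. To make $\log\log$ and $\log$ nonvanishing near $\theta=\pi$, we use $\log\log(e/\theta)\ge$ a positive constant there. If needed, we treat $\theta$ bounded away from $0$ separately, since there everything is $\asymp1$ by continuity and positivity.
\end{itemize}

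The main obstacle is only bookkeeping. The delicate point is justifying the replacement of the complex logarithm by $\log\frac{e}{s+\theta}$ uniformly, which rests on the real part dominating. A second point is the range where $\theta$ is not small, where the stated asymptotics must be interpreted with constants; there every quantity is comparable to $1$ by compactness and positivity of the integrand.
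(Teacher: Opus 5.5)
Your proposal is correct and follows essentially the paper's route: replace $|1-re^{i\theta}|$ by $1-r+|\theta|$, replace the complex logarithm by its real part (which you justify more carefully than the paper's ``standard estimates''), and then evaluate the resulting one-variable integral; your split at $1-r=|\theta|$ and the substitution $x=\log(e/s)$ are just an explicit way of carrying out the paper's computation of $\int_{|\theta|}^{c}x^{-\alpha-1}(\log\frac{e}{x})^{-\beta}\,dx$. One small caveat is that your remarks about $\theta$ near $\pi$ are false as written: $\log\log\frac{e}{\theta}$ is not bounded below by a positive constant there, and ``every quantity'' is not $\asymp 1$, because $\log\frac{e}{|\theta|}\le 0$ once $|\theta|\ge e$ and $\log\log\frac{e}{|\theta|}\le 0$ once $|\theta|\ge 1$; so the lemma only makes sense for small $|\theta|$ (the paper simply restricts to $|\theta|<\frac12$), and likewise the lower bound in your display should read $\log\frac{e}{|w|}\ge 1-\log 2$ rather than $\ge 1$.
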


	\begin{proof}
	It suffices to consider \(0<|\theta|<\frac12\).
	Set
	\[
	E:=\left\{re^{i\theta}\in\mathbb D:\frac{9}{10}<r<1,\ |\theta|<\frac12\right\}.
	\]
	By standard estimates on \(E\), we have
	\[
	J_{\alpha,\beta}(\theta)
	\asymp
	1+\int_{9/10}^1
	\frac{dr}{(1-r+|\theta|)^{\alpha+1}
	\bigl(\log \frac{e}{1-r+|\theta|}\bigr)^\beta }.
	\]
	Setting \(x=1-r+|\theta|\), we get
	\[
	J_{\alpha,\beta}(\theta)
	\asymp
	1+\int_{|\theta|}^{1/10}
	\frac{dx}{x^{\alpha+1}\bigl(\log \frac{e}{x}\bigr)^\beta }.
	\]
	If \(\alpha>0\), the last integral is dominated by the lower limit, and hence
	\[
	J_{\alpha,\beta}(\theta)
	\asymp
	|\theta|^{-\alpha}\Bigl(\log \frac{e}{|\theta|}\Bigr)^{-\beta}.
	\]
	If \(\alpha<0\), the integral converges uniformly near \(x=0\), so
	\[
	J_{\alpha,\beta}(\theta)\asymp 1.
	\]
	If \(\alpha=0\), then
	\[
	\int_{|\theta|}^{1/10}\frac{dx}{x\bigl(\log \frac{e}{x}\bigr)^\beta}
	\asymp
	\begin{cases}
	\Bigl(\log \dfrac{e}{|\theta|}\Bigr)^{1-\beta}, & \beta<1,\\[1ex]
	\log\log \dfrac{e}{|\theta|}, & \beta=1,\\[1ex]
	1, & \beta>1.
	\end{cases}
	\]
	This proves the lemma.
	\end{proof}

	\smallno
	\begin{proof} Now we continue the proof of Lemma \ref{Fbc tent}. By Lemma \ref{tent frac trans}, $f_{b,c} \in A T_{q, \alpha}^p$
	if and only if
	$$
	\mathfrak{I}_{\frac{\alpha+1}{q}}^{\mathrm{RL}} f(z) \in \mathcal{A}\mathcal{T}_{q,-1}^p.
	$$
	Here we assume $b>\frac{\alpha+2}{q}$.
	By Lemma \ref{Fbc trans},  it suffices to consider
	$$
	\frac{1}{(1-z)^{b-\frac{\alpha+1}{q}}}\left(\log \frac{e}{1-z}\right)^{-c} \in \mathcal{A}\mathcal{T}_{q,-1}^p.
	$$
	By Lemma \ref{J es}, we have,
\[
\|f_{b,c}\|_{\mathcal{A}\mathcal{T}_{q,\alpha}^p}^p
\asymp
\int_0^\pi
\theta^{\left(\frac{\alpha+2}{q}-b\right)p}
\left(\log \frac{e}{\theta}\right)^{-cp}\,d\theta.
\]
The proof of Lemma \ref{Fbc tent} is complete now.
\end{proof}

\subsection{Discrete tent-space tools}

\medno We need several lemmas on the discrete tent spaces in the proof of the main theorems.

\begin{definition}\label{discerte tent}
	Let $Z=\left\{z_n\right\}$ be an $r$-lattice and $0<p, q<+\infty$. We say that $\left\{\lambda_n\right\} \in T_p^q(Z)$ if
	$$
	\left\|\left\{\lambda_n\right\}\right\|_{T_p^q(Z)}:=\left(\int_{\mathbb{T}}\left(\sum_{z_n \in \Gamma(\xi)}\left|\lambda_n\right|^p\right)^{q / p}|d \xi|\right)^{1 / q}<+\infty.
	$$
\end{definition}

\begin{lemma}\label{Tent discrete}
	Let $0<p, q< \infty$ and $Z=\left\{a_k\right\}$ be an $r$-lattice. If $f \in \mathcal{A}\mathcal{T}_{q,\alpha}^p$, then
	$$
	\|f\|_{\mathcal{A}\mathcal{T}_{q,\alpha}^p} \asymp\left\|\left\{|f(a_k)|(1-|a_k|)^{\frac{\alpha+2}{q}}\right\}\right\|_{T_q^p(Z)}.
	$$
\end{lemma}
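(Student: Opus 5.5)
Both inequalities rest on the sub-mean value property of $|f|^q$ on hyperbolic disks, together with the fact that Kor\'anyi regions of different apertures give equivalent tent-space quasinorms. Fix the lattice $Z=\{a_k\}$ with separation constant $r$. For each $k$ let $D_k=D(a_k,r)$ be the pseudo-hyperbolic disk. Recall two standard facts: the $D_k$ cover $\mathbb{D}$ with bounded overlap, and the shrunken disks $D(a_k,r/2)$ are pairwise disjoint. On $D_k$ we have $1-|z|\asymp 1-|a_k|$ and $|D_k|\asymp(1-|a_k|)^2$. We also use that
\[
\|f\|_{\mathcal{T}^p_{q,\alpha}}\asymp \Bigl\|\Bigl(\int_{\Gamma_\gamma(\xi)}|f|^q\,dA_\alpha\Bigr)^{1/q}\Bigr\|_{L^p(\mathbb{T})}
\]
for any apertures $1<\gamma,\gamma'$, and likewise for the discrete spaces $T^p_q(Z)$. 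This is the classical change-of-aperture lemma of Coifman--Meyer--Stein, and it is valid for all $0<p,q<\infty$.

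\textbf{Upper bound for the discrete norm.} Since $|f|^q$ is subharmonic,
\[
|f(a_k)|^q\lesssim (1-|a_k|)^{-2}\int_{D(a_k,r/2)}|f|^q\,dA .
\]
Multiplying by $(1-|a_k|)^{\alpha+2}$ gives
\[
|f(a_k)|^q(1-|a_k|)^{\alpha+2}\lesssim \int_{D(a_k,r/2)}|f(z)|^q(1-|z|)^\alpha\,dA(z).
\]
If $a_k\in\Gamma(\xi)$, then $D(a_k,r/2)\subset\Gamma_{\gamma}(\xi)$ for some larger aperture $\gamma$ depending only on $r$. Summing over $a_k\in\Gamma(\xi)$ and using the disjointness of the small disks, we get
\[
\sum_{a_k\in\Gamma(\xi)}\bigl(|f(a_k)|(1-|a_k|)^{\frac{\alpha+2}{q}}\bigr)^q\lesssim\int_{\Gamma_\gamma(\xi)}|f|^q\,dA_\alpha .
\]
Raising to the power $p/q$, integrating over $\mathbb{T}$, and changing the aperture gives $\|\cdot\|_{T^p_q(Z)}\lesssim\|f\|_{\mathcal{AT}^p_{q,\alpha}}$.

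\textbf{Lower bound for the discrete norm.} This direction is the main obstacle. The pointwise values $|f(a_k)|$ do not dominate $|f|$ on $D_k$; one has only
\[
\sup_{D_k}|f|^q\lesssim(1-|a_k|)^{-2}\int_{D(a_k,2r)}|f|^q\,dA,
\]
which is circular. I would handle this the standard way, by making the lattice fine enough.

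For $z\in D_k$, combine the Cauchy estimate for $f'$ with the mean value inequality. This yields
\[
|f(z)-f(a_k)|^q\lesssim r^q\,(1-|a_k|)^{-2}\int_{D(a_k,R)}|f|^q\,dA
\]
for a fixed $R$ (say $R=1/2$) and all small $r$; the factor $r^q$ appears because $z$ is within hyperbolic distance $\lesssim r$ of $a_k$. Integrating over $D_k$, weighting, and summing over $a_k\in\Gamma(\xi)$ (the disks $D_k$ cover $\Gamma(\xi)$), we obtain
\[
\int_{\Gamma(\xi)}|f|^q\,dA_\alpha\lesssim \sum_{a_k\in\Gamma_{\gamma}(\xi)}|f(a_k)|^q(1-|a_k|)^{\alpha+2}+ C r^q\sum_{a_k\in\Gamma_\gamma(\xi)}\int_{D(a_k,R)}|f|^q\,dA_\alpha .
\]
Here $\Gamma_\gamma(\xi)$ is slightly larger than $\Gamma(\xi)$. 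In the last sum, each point of $\mathbb{D}$ lies in at most $C r^{-2}$ of the disks $D(a_k,R)$, so the overlap grows as $r\to0$. This loss must be controlled.

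The remedy is the usual one. Use a $\delta$-lattice $Z'$ built as a refinement of the $r$-lattice: each $D_k$ is split into boundedly many cells with centers $a_{k,j}$, and we apply the oscillation estimate on each cell with the local radius $\delta$. This gives an error term of the form $C\delta^q\|f\|^p_{\mathcal{AT}^p_{q,\alpha}}$, where the overlap is now fixed by $r$ rather than by $\delta$, after passing to the $L^{p/q}$ quasinorm and changing aperture. Choosing $\delta$ small, the error term is absorbed into the left-hand side; absorption is legitimate because $f\in\mathcal{AT}^p_{q,\alpha}$ makes the norm finite, and one may apply the argument to dilates $f_\rho$ first if needed. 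This proves the bound for sufficiently fine lattices.

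\textbf{Passing to an arbitrary $r$-lattice.} Finally, the discrete norm has to be compared between $Z$ and its refinement $Z'$. Each point of $Z'$ lies in some $D_k$, so its value is dominated by $\sup_{D_k}|f|$. I would therefore pass through the discrete ``local sup'' norm
\[
\Bigl\|\bigl\{\sup_{D_k}|f|\,(1-|a_k|)^{\frac{\alpha+2}{q}}\bigr\}\Bigr\|_{T^p_q(Z)}.
\]
The first step's argument shows this local sup norm is $\lesssim\|f\|_{\mathcal{AT}^p_{q,\alpha}}$, and it trivially dominates the continuous norm. The remaining step, relating it back to the values at the points $a_k$ alone, is exactly where the smallness of $r$ enters. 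So the equivalence holds for lattices with $r$ sufficiently small, which is the setting in which the discretization is used in the sequel.
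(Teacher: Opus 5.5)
Your estimate of the discrete norm by the continuous one is the paper's argument: sub-mean-value inequality on hyperbolic disks, enlargement of $\Gamma(\xi)$, bounded overlap (you use disjointness), and change of aperture.

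For the reverse inequality the paper gives no argument; it simply invokes Lemma 2 of \cite{Wang2020}. You instead prove it directly by an oscillation estimate and absorption. That is self-contained and, unlike the citation, shows that the lattice must be fine.

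Your conclusion that the equivalence holds only for small $r$ is a defect of the statement as written, not of your proposal: the reverse inequality fails for a general $r$-lattice. For $p=q$, Fubini gives $\mathcal{A}\mathcal{T}^p_{p,\alpha}=L^p_a(dA_{\alpha+1})$. The discrete quasinorm is then $\asymp\bigl(\sum_k|f(a_k)|^p(1-|a_k|)^{\alpha+3}\bigr)^{1/p}$, so the lemma would make $Z$ a sampling sequence for $L^p_a(dA_{\alpha+1})$. But by Seip's density theorem, a fixed separated lattice becomes interpolating once $(\alpha+2)/p$ exceeds its upper density. It is then the zero set of some nonzero $f$ in that space, and for this $f$ the discrete side vanishes. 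So the statement needs $r<r_0(p,q,\alpha)$, which is enough for the later applications since the lattice can be chosen fine. Alternatively, $|f(a_k)|$ must be replaced by $\sup_{D(a_k,r)}|f|$, and your last paragraph correctly proves that version for every lattice.

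Inside the fine-lattice argument, a bookkeeping slip creates the ``overlap'' difficulty. When you integrate over $D_k$ you drop the factor $|D_k|\asymp r^2(1-|a_k|)^2$. This costs nothing on the main term, but it is exactly the factor needed on the error term. Writing $d\mu(z)=(1-|z|)^\alpha\,dA(z)$ and keeping the factor, you get
\[
\int_{\Gamma(\xi)}|f|^q\,d\mu\lesssim r^2\sum_{a_k\in\Gamma_\gamma(\xi)}|f(a_k)|^q(1-|a_k|)^{\alpha+2}+r^{2+q}\sum_{a_k\in\Gamma_\gamma(\xi)}\int_{D(a_k,R)}|f|^q\,d\mu ,
\]
with $\gamma$ uniform for $r\le 1/2$. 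The overlap $\asymp r^{-2}$ of the disks $D(a_k,R)$ cancels the $r^2$. What remains is an error $\lesssim r^q\int_{\Gamma_{\gamma'}(\xi)}|f|^q\,d\mu$, with $\gamma'$ depending only on $R$.

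Now raise to the power $p/q$, integrate over $\mathbb{T}$, and change aperture with a constant independent of $r$. The error becomes $\lesssim r^p\|f\|^p_{\mathcal{A}\mathcal{T}^p_{q,\alpha}}$, which is finite by hypothesis and is absorbed once $r$ is small.

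So the refinement $Z'$ is unnecessary. As written it is also inaccurate: covering $D_k$ by cells of radius $\delta$ takes $\asymp(r/\delta)^2$ cells, not boundedly many. The refinement only works once this same area factor is tracked.
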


\begin{proof}
	By Lemma 2 in \cite{Wang2020}, it suffices to prove
	$$\left\|\left\{|f(a_k)|(1-|a_k|)^{\frac{\alpha+2}{q}}\right\}\right\|_{T_q^p(Z)} \lesssim \|f\|_{\mathcal{A}\mathcal{T}_{q,\alpha}^p}.$$
	We can choose $\widetilde{\Gamma}(\xi)$ such that
	\begin{align}\label{Gamma inc}
		\bigcap_{a_k\in\Gamma(\xi)} D(a_k,r)\subseteq \widetilde{\Gamma}(\xi),
	\end{align}
	where $D\left(a_k, r\right)$ are hyperbolic disks that correspond to the $r$-lattice.
	Then by (\ref{Gamma inc}) and properties of $r$-lattices,
	$$
	\begin{aligned}
		& \int_\mathbb{T}\left(\sum_{a_{k} \in \Gamma(\xi)}\left|f\left(a_k\right)\right|^q\left(1-\left|a_k\right|^2\right)^{\alpha+2}\right)^{\frac{p}{q}}|d \xi| \\
		&\qquad \lesssim \int_\mathbb{T}\left(\sum_{a_k \in{\widetilde{\Gamma} (\xi)}} \int_{D\left(a_{k}, r\right)}  |f(z)|^q\left(1-|z|^2\right)^\alpha d A(z)\right)^{\frac{p}{q}}|d \xi| \\
		&\qquad \lesssim \int_\mathbb{T}\left(\int_{\widetilde{\widetilde{\Gamma}}(\xi)}|f(z)|^q\left(1-|z|^2\right)^\alpha d A(z)\right)^{\frac{p}{q}} |d \xi|  \\
		&\qquad \asymp \int_\mathbb{T}\left(\int_{\Gamma(\xi)}\left|f(z)\right|^q\left(1-|z|^2\right)^\alpha d A(z)\right)^{\frac{p}{q}}|d \xi|.
	\end{aligned}
	$$
	The last step follows from the fact that different apertures define the same tent space (with equivalent quasinorms).
\end{proof}

\begin{lemma}[\cite{Coi1985, Wang2020}]\label{Tent fraction}
	Let $0<p, q<+\infty$ and $Z=\left\{a_k\right\}$ be an $r$-lattice. If $p \leq p_1, p_2 <+\infty$, $q \leq q_1, q_2 <+\infty$ and satisfy $$\frac{1}{p}=\frac{1}{p_1}+\frac{1}{p_2},$$ and $$\frac{1}{q}=\frac{1}{q_1}+\frac{1}{q_2},$$ then
	$$
	T_p^q(Z)=T_{p_1}^{q_1}(Z) \cdot T_{p_2}^{q_2}(Z).
	$$
\end{lemma}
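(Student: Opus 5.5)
The inclusion $T_{p_1}^{q_1}(Z)\cdot T_{p_2}^{q_2}(Z)\subseteq T_p^q(Z)$ is a double application of H\"older's inequality. Recall that in Definition \ref{discerte tent} the lower index $p$ is the inner (summation) exponent and the upper index $q$ is the outer (integration) exponent. For $\lambda=\mu\nu$, fix $\xi\in\mathbb{T}$ and apply the discrete H\"older inequality with $1/p=1/p_1+1/p_2$ to the sum over $\{a_k\in\Gamma(\xi)\}$:
$$
\Big(\sum_{a_k\in\Gamma(\xi)}|\mu_k\nu_k|^p\Big)^{1/p}\le A_{p_2}\mu(\xi)\,A_{p_1}\nu(\xi),
\qquad A_s\lambda(\xi):=\Big(\sum_{a_k\in\Gamma(\xi)}|\lambda_k|^s\Big)^{1/s}.
$$
Then H\"older on $\mathbb{T}$ with $1/q=1/q_1+1/q_2$ gives $\|\mu\nu\|_{T_p^q}\le\|\mu\|_{T_{p_2}^{q_2}}\|\nu\|_{T_{p_1}^{q_1}}$. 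Which factor carries which pair of exponents is immaterial, by symmetry.

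For the reverse inclusion, given $\lambda\in T_p^q(Z)$, I will factor $\lambda_k=\mu_k\nu_k$ using a stopping-time weight $\rho_k>0$ attached to each lattice point:
$$
\mu_k=|\lambda_k|^{p/p_2}\rho_k^{\,q/q_2-p/p_2},\qquad
\nu_k=\operatorname{sgn}(\lambda_k)\,|\lambda_k|^{p/p_1}\rho_k^{\,q/q_1-p/p_1}.
$$
Since $p/p_1+p/p_2=1$ and $q/q_1+q/q_2=1$, the exponents of $|\lambda_k|$ sum to $1$ and those of $\rho_k$ to $0$, so $\mu_k\nu_k=\lambda_k$. The weight is defined as follows. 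Let $F=A_p\lambda$ computed with a wider aperture $\widetilde\Gamma$ (harmless, since apertures give equivalent quasinorms, as in the proof of Lemma \ref{Tent discrete}). Let $O_j=\{\xi: F(\xi)>2^j\}$ and let $O_j^*=\{M\chi_{O_j}>1/2\}$ be its Hardy--Littlewood enlargement. Put $\rho_k=2^j$ when $a_k$ lies in the tent $\widehat{O_j^*}$ but not in $\widehat{O_{j+1}^*}$.

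The two required estimates then have the common form
$$
\int_{\mathbb{T}}\Big(\sum_{a_k\in\Gamma(\xi)}|\lambda_k|^p\rho_k^{\,\gamma-p}\Big)^{s/p'}|d\xi|\lesssim\|\lambda\|_{T_p^q}^q,
$$
with $(\gamma,s,p')=(p_2q/q_2,\,q_2,\,p_2)$ for $\mu$ and the analogous triple for $\nu$. To prove it, decompose the inner sum by layers $j$. For the points in layer $j$ I will use the standard tent-space estimate
$$
\int_{\mathbb{T}}\sum_{a_k\in\Gamma(\xi),\,\rho_k=2^j}|\lambda_k|^p\,|d\xi|\lesssim\int_{O_j^*\setminus O_{j+1}}F^p\lesssim 2^{jp}|O_j|.
$$
This rests on two facts: a point of $\widehat{O_j^*}\setminus\widehat{O_{j+1}^*}$ has a cone-shadow of which a fixed proportion lies outside $O_{j+1}$, where $F\le2^{j+1}$; and the Carleson-box geometry of an $r$-lattice controls the overlap of the cones. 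The weight $\rho_k^{\gamma-p}=2^{j(\gamma-p)}$ then turns the layer-$j$ contribution into $\lesssim 2^{j\gamma}|O_j|$, and $\sum_j2^{jq}|O_j|\asymp\int F^q\asymp\|\lambda\|_{T_p^q}^q$.

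The main obstacle is the outer exponent $s/p'$, which need not equal $1$. When $s/p'\ge 1$, I intend to bound $\big(\sum_j\cdot\big)^{s/p'}$ pointwise in $\xi$. On $\xi\notin O_{J+1}^*$ only layers $j\le J$ contribute (up to an aperture change), so the inner sum is dominated by a geometric series in $2^{j}$ up to $F(\xi)$, and a Fefferman--Stein/Carleson-type vector-valued inequality handles the rest. When $s/p'<1$, I will instead use H\"older over the level sets: I expect this case to require the hypotheses $p\le p_i$ and $q\le q_i$, exactly as in Coifman--Meyer--Stein \cite{Coi1985} and its discrete version in \cite{Wang2020}. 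If the direct stopping-time bookkeeping becomes too heavy, the fallback is to transfer the continuous factorization theorem for tent spaces (Cohn--Verbitsky) to sequences by setting $\tilde\lambda=\sum_k\lambda_k\chi_{D(a_k,r)}$. Under this transfer $T_p^q(Z)$ embeds isomorphically into a continuous tent space by the argument of Lemma \ref{Tent discrete}, and averaging the continuous factors over each disk $D(a_k,r)$ produces discrete factors.
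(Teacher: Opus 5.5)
The paper gives no proof of this lemma; it only cites \cite{Coi1985, Wang2020}, so your argument has to stand on its own. Several parts are fine: the easy inclusion by double H\"older, the ansatz $\lambda_k=\mu_k\nu_k$ with a weight $\rho_k$, and the integrated layer bound $\int_{\mathbb{T}}S_j\,|d\xi|\lesssim 2^{jp}|O_j|$, where $S_j(\xi)=\sum_{a_k\in\Gamma(\xi),\,\rho_k=2^j}|\lambda_k|^p$. Your plan for $\theta:=s/p'\le 1$ also works. Since $S_j$ is supported in $O_j^*$ and $\gamma\theta=q$, you get $\int G^\theta\le\sum_j|O_j^*|^{1-\theta}\bigl(\int G_j\bigr)^\theta\lesssim\sum_j2^{jq}|O_j|$, with no use of $p\le p_i$, $q\le q_i$ (these follow automatically from the reciprocal relations). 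The factor whose exponent $\gamma-p$ is nonnegative is harmless for every $\theta$: $a_k\in\Gamma(\xi)$ forces $\rho_k\le F^*(\xi):=2^{\max\{j:\,\xi\in O_j^*\}}$, and $\|F^*\|_q\lesssim\|F\|_q$. You should say this explicitly.

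The genuine gap is the factor with $\gamma<p$ and $\theta>1$, and this is not a marginal case. Whenever $q\ge p$ and $q/q_i\ne p/p_i$, the index with $q/q_i<p/p_i$ has $\theta_i=q_i/p_i>q/p\ge1$. There the weights $2^{j(\gamma-p)}$ blow up as $j\to-\infty$, and the integrated bound on $S_j$ gives no control of $\bigl\|\sum_j2^{j(\gamma-p)}S_j\bigr\|_{L^\theta}$. Your ``geometric series'' domination presupposes the pointwise bound $S_j(\xi)\lesssim 2^{jp}$, which you never prove, and the appeal to a ``Fefferman--Stein/Carleson-type inequality'' does not supply it.

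The missing idea is a lower bound for $\rho_k$ by a truncated cone sum. For $a_k\in\Gamma(\xi)$, let $P_k(\xi)=\sum_{a_m\in\Gamma(\xi),\,|a_m|\le|a_k|}|\lambda_m|^p$. Every such $a_m$ lies in $\Gamma_6(\eta)$ for each $\eta$ with $a_k\in\Gamma(\eta)$. Hence if $\widetilde\Gamma\supseteq\Gamma_6$, then $F\ge P_k(\xi)^{1/p}$ on the whole shadow of $a_k$, so $a_k\in\widehat{O_j}\subseteq\widehat{O_j^*}$ whenever $2^j<P_k(\xi)^{1/p}$, and therefore $\rho_k\ge P_k(\xi)^{1/p}/2$. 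Applying this to the deepest layer-$j$ point of $\Gamma(\xi)$ gives $S_j(\xi)\le 2^{(j+1)p}$. Then $G(\xi)\lesssim\sum_{j\le J(\xi)}2^{j\gamma}\lesssim F^*(\xi)^\gamma$, so $\int G^\theta\lesssim\int (F^*)^q$ for every $\theta$, with no case split. Alternatively, order $\Gamma(\xi)\cap Z$ by $|a_k|$ and use $\sum_k c_k(c_1+\dots+c_k)^{-\delta}\le(1-\delta)^{-1}\bigl(\sum_k c_k\bigr)^{1-\delta}$ with $\delta=1-\gamma/p$; this gives $G(\xi)\lesssim F(\xi)^\gamma$ directly.

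In fact the stopping time is unnecessary. The choice $\rho_k:=\inf\{F(\eta): a_k\in\Gamma(\eta)\}$ satisfies $P_k(\xi)^{1/p}\le\rho_k\le F(\xi)$ pointwise. This yields $A_{p_2}\mu\lesssim F^{q/q_2}$ and $A_{p_1}\nu\lesssim F^{q/q_1}$ pointwise, with no maximal function at all.

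Your fallback also fails as literally stated: averaging both continuous factors over $D(a_k,r)$ does not reproduce $\lambda_k$. You would have to average only one factor over a disjoint partition subordinate to the lattice, define the other as the quotient, and control it by H\"older.
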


\begin{lemma}[\cite{Ars1999, Jev1996, Lueck1991}]\label{Tent dual p>1}
	Let $Z=\left\{z_n\right\}$ be an $r$-lattice and $1 < p<+\infty$, $1<q<+\infty$. Then $$\left(T_p^q(Z)\right)^* \cong T_{p^{\prime}}^{q^{\prime}}(Z),$$ where $\frac{1}{p}+\frac{1}{p^{\prime}}=1$ and $\frac{1}{q}+\frac{1}{q^{\prime}}=1$. The isomorphism between $\left(T_p^q(Z)\right)^*$ and $T_{p^{\prime}}^{q^{\prime}}(Z)$ is given by the operator
	$$
	\left\{b_k\right\} \mapsto\left\langle\cdot,\left\{b_k\right\}\right\rangle,
	$$
	where $\left\langle\cdot,\left\{b_k\right\}\right\rangle$ is defined by
	$$
	\left\langle\left\{a_k\right\},\left\{b_k\right\}\right\rangle=\sum_k a_k b_k\left(1-\left|z_k\right|\right), \quad\left\{a_k\right\} \in T_p^q(Z).
	$$
	In fact,
	$$
	\left\|\left\{b_k\right\}\right\|_{T_{p^{\prime}}^{q^{\prime}}(Z)} \asymp \sup \left\{\left|\sum_k a_k b_k\left(1-\left|z_k\right|\right)\right|:\left\|\left\{a_k\right\}\right\|_{T_p^q(Z)}=1\right\}.
	$$
\end{lemma}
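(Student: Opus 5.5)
The plan is to embed $T_p^q(Z)$ isometrically into a vector-valued Lebesgue space, use its known dual, and then pull the dual back to sequences by averaging.

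\textbf{Step 1: Hölder bound.} For $a_k\in\mathbb{D}$ put $I_k=\{\xi\in\mathbb{T}: z_k\in\Gamma(\xi)\}$. This is an arc with $|I_k|\asymp 1-|z_k|$. By Fubini,
\[
\sum_k |a_k b_k|(1-|z_k|)\asymp \int_{\mathbb{T}}\sum_{z_k\in\Gamma(\xi)}|a_k b_k|\,|d\xi| .
\]
Applying Hölder's inequality in the inner sum (exponents $p,p'$) and then in the outer integral (exponents $q,q'$) gives
\[
|\langle \{a_k\},\{b_k\}\rangle|\lesssim \|\{a_k\}\|_{T_p^q(Z)}\,\|\{b_k\}\|_{T_{p'}^{q'}(Z)} .
\]
So every $\{b_k\}\in T_{p'}^{q'}(Z)$ induces a bounded functional on $T_p^q(Z)$ whose norm is $\lesssim\|\{b_k\}\|_{T_{p'}^{q'}(Z)}$. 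The functional determines $\{b_k\}$ uniquely, since testing it on the unit sequences $e_k$ returns $b_k(1-|z_k|)$.

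\textbf{Step 2: representation of an arbitrary functional.} Let $\Lambda\in (T_p^q(Z))^*$. Define the isometric embedding
\[
J:T_p^q(Z)\to L^q(\mathbb{T};\ell^p),\qquad (J a)(\xi)=\bigl(a_k\chi_{I_k}(\xi)\bigr)_k .
\]
Extend $\Lambda\circ J^{-1}$ from the range of $J$ to all of $L^q(\mathbb{T};\ell^p)$ by Hahn--Banach, keeping the same norm. Since $1<p,q<\infty$, $(L^q(\ell^p))^*=L^{q'}(\ell^{p'})$, so there is $(g_k)\in L^{q'}(\mathbb{T};\ell^{p'})$ with
\[
\|(g_k)\|_{L^{q'}(\ell^{p'})}\le\|\Lambda\|,\qquad \Lambda(a)=\int_{\mathbb{T}}\sum_k a_k\chi_{I_k}(\xi)g_k(\xi)\,|d\xi| .
\]
Now set
\[
b_k=\frac{1}{1-|z_k|}\int_{I_k}g_k(\xi)\,|d\xi| .
\]
Then $\Lambda(a)=\sum_k a_k b_k(1-|z_k|)$, at least for finitely supported $a$, and by density for all $a\in T_p^q(Z)$.

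\textbf{Step 3: norm control of $b$ (main obstacle).} It remains to show $\|\{b_k\}\|_{T_{p'}^{q'}(Z)}\lesssim\|(g_k)\|_{L^{q'}(\ell^{p'})}$.

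1. Fix $\xi$ with $z_k\in\Gamma(\xi)$. Then $I_k$ is contained in an arc centred at $\xi$ of length $\lesssim 1-|z_k|$.
2. Since $|I_k|\asymp 1-|z_k|$, this gives the pointwise bound $|b_k|\lesssim M g_k(\xi)$, where $M$ is the Hardy--Littlewood maximal operator on $\mathbb{T}$.
3. Consequently
\[
\Bigl(\sum_{z_k\in\Gamma(\xi)}|b_k|^{p'}\Bigr)^{1/p'}\lesssim\Bigl(\sum_k (Mg_k(\xi))^{p'}\Bigr)^{1/p'} .
\]
4. Take the $L^{q'}(\mathbb{T})$ norm and apply the Fefferman--Stein vector-valued maximal inequality. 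This is valid precisely because $1<p',q'<\infty$, and it yields the desired bound.

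The care needed here is that the pointwise bound in step 2 must hold uniformly in $k$ with fixed aperture constants. This is routine from the geometry of Kor\'anyi regions and $r$-lattices, but it is the only genuinely analytic input.

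\textbf{Step 4: conclusion.} Steps 1–3 show that the map $\{b_k\}\mapsto\langle\cdot,\{b_k\}\rangle$ is a bijection from $T_{p'}^{q'}(Z)$ onto $(T_p^q(Z))^*$. They also give $\|\Lambda\|\lesssim\|\{b_k\}\|$ (Step 1) and $\|\{b_k\}\|\lesssim\|\Lambda\|$ (Step 3). Since $\|\Lambda\|$ is exactly the supremum in the statement, this is the claimed norm equivalence.
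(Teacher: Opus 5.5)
Your proof is correct and complete. The paper does not prove this lemma at all; it imports it from Arsenovi\'c, Jevti\'c and Luecking, so there is no internal argument to match. What you give is the standard vector-valued proof of tent-space duality, the discrete counterpart of realizing tent spaces as complemented subspaces of $L^q(\ell^p)$-type spaces, in the spirit of Harboure--Torrea--Viviani.

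The key points all check out:
\begin{enumerate}
\item $J$ is isometric, because $\chi_{I_k}(\xi)=1$ exactly when $z_k\in\Gamma(\xi)$.
\item Hahn--Banach together with $(L^q(\mathbb{T};\ell^p))^*=L^{q'}(\mathbb{T};\ell^{p'})$ gives the representing $(g_k)$.
\item If $z_k\in\Gamma(\xi)$, then $|\xi-z_k/|z_k||<3(1-|z_k|)$. Hence $I_k$ lies in an arc about $\xi$ of length $\lesssim 1-|z_k|$, and $|b_k|\lesssim Mg_k(\xi)$.
\item The Fefferman--Stein inequality closes the estimate.
\end{enumerate}

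Compared with a bare citation, your route is self-contained and shows exactly where $1<p,q<\infty$ enters. It is used for reflexivity of $\ell^p$ when identifying the dual of $L^q(\ell^p)$, and for the admissible range of the vector-valued maximal inequality. This also explains why the paper needs the different Lemma~\ref{duality norm} once the outer exponent drops below $1$.

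Two minor remarks.
\begin{enumerate}
\item In the proof of Theorem~\ref{main thm}, the lemma is used to compute the norm of a sequence not known in advance to lie in the space. Your argument covers this case too. A finite supremum over finitely supported $\{a_k\}$ defines a bounded functional, and testing on the $e_k$ shows its representing sequence must be the given one.
\item With the aperture $\Gamma=\Gamma_2$, the origin lies in no $\Gamma(\xi)$, so $I_k=\emptyset$ when $z_k=0$. Your estimate $|I_k|\asymp 1-|z_k|$, and in fact the statement itself, therefore needs $0\notin Z$. This is a harmless normalization of the lattice, not a defect of your argument.
\end{enumerate}
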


\begin{lemma}[\cite{Aguilar2023}]\label{duality norm}
	Let $Z=\left\{z_k\right\}$ be an $r$-lattice. If $0<p<+\infty$, $0<q<1$, then
	$$
	\sup \left\{\left|\sum_k a_k b_k\left(1-\left|z_k\right|\right)\right|:\left\|\left\{b_k\right\}\right\|_{T_p^q(Z)}=1\right\} \asymp \sup _k\left|a_k\right|\left(1-\left|z_k\right|\right)^{1-1 / q}
	$$
	for any sequence $\left\{a_k\right\}$.
\end{lemma}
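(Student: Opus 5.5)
The plan is to prove the two inequalities in the asserted equivalence separately. Write $M:=\sup_k|a_k|(1-|z_k|)^{1-1/q}$ and $S:=\sup\{|\sum_k a_kb_k(1-|z_k|)|:\|\{b_k\}\|_{T_p^q(Z)}=1\}$. Throughout we use the convention of Definition \ref{discerte tent}: $p$ is the inner (summation) exponent and $q<1$ is the outer (integration) exponent. We also use the elementary geometric fact that for a point $z\in\mathbb{D}$ the "shadow" $I(z):=\{\xi\in\mathbb{T}: z\in\Gamma(\xi)\}$ is an arc of length $\asymp 1-|z|$, with constants depending only on the aperture.

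\emph{Lower bound $S\gtrsim M$.} Fix $j$ and test with the unit-vector sequence $b=e_j/\|e_j\|_{T_p^q(Z)}$. Since $\sum_{z_n\in\Gamma(\xi)}|(e_j)_n|^p=\chi_{I(z_j)}(\xi)$, we get $\|e_j\|_{T_p^q(Z)}=|I(z_j)|^{1/q}\asymp(1-|z_j|)^{1/q}$. Hence $S\ge|a_j|(1-|z_j|)/\|e_j\|\asymp|a_j|(1-|z_j|)^{1-1/q}$, and taking the supremum over $j$ gives $S\gtrsim M$.

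\emph{Upper bound $S\lesssim M$.} We have $|\sum_k a_kb_k(1-|z_k|)|\le M\sum_k|b_k|(1-|z_k|)^{1/q}$, so it suffices to prove the embedding
\[
\sum_k|b_k|(1-|z_k|)^{1/q}\lesssim\|\{b_k\}\|_{T_p^q(Z)},\qquad 0<q<1.
\]
If $p\le q$, this is direct. Since $q<1$, $\sum_k x_k\le(\sum_k x_k^q)^{1/q}$, and
\[
\sum_k|b_k|^q(1-|z_k|)\asymp\int_{\mathbb{T}}\sum_{z_k\in\Gamma(\xi)}|b_k|^q\,|d\xi|\le\int_{\mathbb{T}}\Bigl(\sum_{z_k\in\Gamma(\xi)}|b_k|^p\Bigr)^{q/p}|d\xi|,
\]
where the last step uses the pointwise inequality $\ell^p\subset\ell^q$.

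For general $p$ (in particular $p>q$, where this pointwise step fails), I would use the atomic decomposition of discrete tent spaces with outer exponent $q\le1$, from the Coifman--Meyer--Stein framework \cite{Coi1985} and its discrete versions cited above. It writes $b=\sum_j\lambda_jA^{(j)}$, where each $A^{(j)}$ is supported on the lattice points of a tent $T(I_j)$ over an arc $I_j$, satisfies $\sum_k|A^{(j)}_k|^p(1-|z_k|)\lesssim|I_j|^{1-p/q}$, and $\sum_j|\lambda_j|^q\lesssim\|b\|_{T_p^q}^q$.

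By the $q$-triangle inequality ($q<1$), it then suffices to bound $\sum_k|A_k|(1-|z_k|)^{1/q}\lesssim1$ uniformly over atoms $A$ supported in a tent $T(I)$.
\begin{itemize}
\item If $p\le1$, use $\sum x\le(\sum x^p)^{1/p}$ together with $(1-|z_k|)^{p/q-1}\lesssim|I|^{p/q-1}$ on $T(I)$; here $p/q-1\ge0$ because we are in the case $p>q$.
\item If $p>1$, apply H\"older with exponents $p,p'$. This gives the bound $|I|^{1/p-1/q}\bigl(\sum_{z_k\in T(I)}(1-|z_k|)^{s}\bigr)^{1/p'}$ with $s=(1/q-1/p)p'$. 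Since $q<1$ forces $s>1$, the lattice sum is $\asymp|I|^{s}$ by counting lattice points of size $2^{-m}|I|$ in the tent (about $2^m$ of them at each level). The powers of $|I|$ then cancel exactly.
\end{itemize}

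The main obstacle is this last step, the case $p>q$. The route above relies on having the discrete atomic decomposition available. If I want to avoid invoking it, I would try instead a stopping-time argument on the level sets of the discrete area function $A_p b(\xi)=(\sum_{z_k\in\Gamma(\xi)}|b_k|^p)^{1/p}$. In that approach, each $z_k$ is assigned to the largest $2^m$ for which $I(z_k)$ sits mostly inside $\{A_pb>2^m\}$; the Hölder/lattice-counting computation above is run on each such region, and the pieces are summed in $m$ using $q<1$.
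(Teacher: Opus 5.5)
The paper does not prove this lemma; it is imported from \cite{Aguilar2023}, so there is no in-paper argument to compare with. Your argument is a correct proof, modulo the discrete atomic decomposition you cite. The unit-vector test gives $S\gtrsim M$. The reduction of $S\lesssim M$ to the embedding $\sum_k|b_k|(1-|z_k|)^{1/q}\lesssim\|\{b_k\}\|_{T_p^q(Z)}$ is right, and the case $p\le q$ is handled correctly. The atom estimates also check out: in the H\"older case $s=(1/q-1/p)p'>1$ holds exactly because $q<1$, and the powers of $|I|$ cancel.

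One small caveat concerns your geometric fact. With the paper's cone $\Gamma=\Gamma_2$, the origin lies in no $\Gamma(\xi)$, so $I(0)=\emptyset$. For $z\neq0$ one does have $|I(z)|\asymp 1-|z|$ uniformly. The lemma itself tacitly needs $0\notin Z$: otherwise, adding multiples of $e_k$ at $z_k=0$ leaves the norm unchanged and makes the left side infinite.

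Your route is heavier than necessary in the case $p>q$. Neither the atomic decomposition nor a stopping time with ``mostly inside'' regions is needed, because the relevant containment is exact. Since $|b_k|\le A_pb(\xi)$ for every $\xi\in I(z_k)$, the open set $E_m=\bigcup_{|b_k|>2^m}I(z_k)$ lies in $\{A_pb>2^m\}$. Each $I(z_k)$ with $2^m<|b_k|\le 2^{m+1}$ sits in a single component arc $J$ of $E_m$, and your lattice count in the tent over $J$ gives $\sum_{I(z_k)\subset J}(1-|z_k|)^{1/q}\lesssim|J|^{1/q}$. Sum over components using $1/q>1$, then over $m$ using $\sum_m x_m^{1/q}\le(\sum_m x_m)^{1/q}$. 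This yields
\[
\sum_k|b_k|(1-|z_k|)^{1/q}\lesssim\sum_m 2^m|E_m|^{1/q}\le\Bigl(\sum_m 2^{mq}|E_m|\Bigr)^{1/q}\lesssim\|A_pb\|_{L^q(\mathbb{T})}=\|\{b_k\}\|_{T_p^q(Z)},
\]
uniformly in $p\in(0,\infty]$ and with no case split.

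The atomic route costs an external theorem, but it explains structurally why the answer does not depend on the inner exponent: for outer exponent at most $1$, the dual norm is read off from atoms. The level-set route is elementary. It also shows the embedding already holds with the nontangential maximal function $\sup_{z_k\in\Gamma(\xi)}|b_k|$ in place of $A_pb$.
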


\subsection{$p$-Banach space estimates}

We use the \(p\)-Banach-space framework from \cite{IMRN}; only the ingredients needed later are recorded here.

\medskip
\noindent\textbf{Fact 1} (\cite{lvanov2017}).
Let \(0<p,q<\infty\) and \(\alpha>-2\). Then \(\mathcal{A}\mathcal{T}_{q,\alpha}^{p}\) is an \(s\)-Banach space, where \(s=\min\{p,q,1\}\).

\medskip
\noindent\textbf{Fact 2} (\cite{perala2018}).
If \(f\in \mathcal{A}\mathcal{T}_{q,\alpha}^{p}\), then \(f_r(z):=f(rz)\to f\) in \(\mathcal{A}\mathcal{T}_{q,\alpha}^{p}\) as \(r\to1^{-}\).

\medskip
\noindent\textbf{Fact 3} (\cite{IMRN}).
Let \(\{X_n\}_{n\ge0}\) be independent symmetric random variables. If
\[
\sum_{n=0}^\infty a_nX_nz^n\in \mathcal{A}\mathcal{T}_{q,\alpha}^{p}
\quad\text{a.s.},
\]
then the partial sums \(S_n=\sum_{k=0}^n a_kX_kz^k\) converge almost surely in \(\mathcal{A}\mathcal{T}_{q,\alpha}^{p}\).

\medno
The next proposition is the key reduction from the almost-sure statement \(\mathcal Rf\in\mathcal{A}\mathcal{T}_{q,\alpha}^{p}\) to quantitative moment estimates.

\begin{proposition}\label{T:Equivalence}
Let \(0<p,q<\infty\), \(\alpha>-2\), \(\{X_n\}_{n\ge0}\) be a standard random sequence, and
\[
f(z)=\sum_{n=0}^\infty a_nz^n\in H(\mathbb D).
\]
Then the following are equivalent:
\begin{itemize}
\item[($\romannumeral1$)] \(\mathcal Rf\in\mathcal{A}\mathcal{T}_{q,\alpha}^{p}\) almost surely;
\item[($\romannumeral2$)] \(\sup_{n\ge0}\bigl\|\sum_{k=0}^n a_kX_kz^k\bigr\|_{\mathcal{A}\mathcal{T}_{q,\alpha}^{p}}<\infty\) almost surely;
\item[($\romannumeral3$)] \(\mathbb E\bigl(\|\mathcal Rf\|_{\mathcal{A}\mathcal{T}_{q,\alpha}^{p}}^{\,t}\bigr)<\infty\) for every \(t>0\);
\item[($\romannumeral4$)] \(\mathbb E\exp\!\bigl(\lambda\|\mathcal Rf\|_{\mathcal{A}\mathcal{T}_{q,\alpha}^{p}}^{\,s}\bigr)<\infty\) for all sufficiently small \(\lambda>0\), where \(s=\min\{p,q,1\}\).
\end{itemize}
Moreover, \(\bigl(\mathbb E\|\mathcal Rf\|_{\mathcal{A}\mathcal{T}_{q,\alpha}^{p}}^{\,t}\bigr)^{1/t}\) are equivalent for all \(t>0\), with constants depending only on \(t\). The same is true for the Bernoulli, Steinhaus, and Gaussian randomizations.
\end{proposition}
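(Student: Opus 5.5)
We prove the cycle of implications (iv)$\Rightarrow$(iii)$\Rightarrow$(i)$\Rightarrow$(ii)$\Rightarrow$(iv), using only that $X:=\mathcal{A}\mathcal{T}_{q,\alpha}^{p}$ is an $s$-Banach space with $s=\min\{p,q,1\}$ (Fact 1), in which dilations converge (Fact 2). The trivial steps come first.

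(iv)$\Rightarrow$(iii) holds because $e^{\lambda x^{s}}$ dominates every power $x^{t}$ up to a constant depending on $t$, $s$ and $\lambda$. (iii)$\Rightarrow$(i) holds because a random variable with finite moment is finite almost surely.

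For (i)$\Rightarrow$(ii), Fact 3 gives that the partial sums $S_n$ converge almost surely in $X$. A convergent sequence in a quasi-normed space is bounded, so $\sup_n\|S_n\|_X<\infty$ almost surely.

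The substantive step is (ii)$\Rightarrow$(iv), together with the equivalence of moments. Here we follow the $p$-Banach-space machinery of \cite{IMRN} and work with the metric $d(f,g)=\|f-g\|_X^{s}$, which makes $X$ a complete metric linear space.

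\emph{Symmetric case (Bernoulli, Steinhaus).} Here $\{X_n a_n z^n\}$ is a sequence of independent symmetric $X$-valued random vectors.
\begin{itemize}
\item L\'evy's maximal inequality, whose proof uses only the triangle inequality for $d$ and symmetry, gives $\mathbb P(\sup_n\|S_n\|_X>\lambda)\le 2\,\mathbb P(\|S_N\|_X>\lambda)$ for every $N$. With (ii), this lets us pass to the limit and obtain (i).
\item Hoffmann-J\o rgensen's inequality in its $s$-norm form yields finiteness of all moments. It also yields the comparison $(\mathbb E\|\mathcal Rf\|^t)^{1/t}\asymp_t(\mathbb E\|\mathcal Rf\|^{t'})^{1/t'}$, which is the Kahane--Khintchine-type equivalence.
\item The exponential integrability of $\|\mathcal Rf\|_X^{s}$ then follows from the standard tail bound for sums of independent symmetric vectors with bounded summands. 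Concretely: the Bernoulli and Steinhaus coefficients are bounded, $\|S_n\|^{s}$ is subadditive, and Hoffmann-J\o rgensen iterated gives $\mathbb P(\|\mathcal Rf\|^{s}>k\,M)\le C\,2^{-k}$ for a median-type level $M$. Summing this tail bound gives (iv).
\end{itemize}

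\emph{Gaussian case.} We reduce to the Rademacher case by the contraction principle. Writing $X_n=\varepsilon_n|X_n|$, we condition on $|X_n|$. Alternatively, we use Fernique's theorem, which is available in complete metric linear spaces for Gaussian measures. Fernique's argument via rotation invariance gives $\mathbb E\exp(\lambda\|\mathcal Rf\|^{2s})<\infty$, which is stronger than (iv).

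The final sentence of the proposition (Bernoulli, Steinhaus, Gaussian give the same conclusion) follows because the three randomizations are comparable. The contraction principle in $s$-normed spaces, together with $\mathbb E|X_n|\asymp1$, gives mutual domination of moments.

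The main obstacle is that $X$ is only quasi-normed, so the Banach-space tools (L\'evy, Hoffmann-J\o rgensen, contraction, Fernique) must be checked for $s$-norms. In particular, the contraction principle with non-$\{\pm1\}$ multipliers needs care for $s<1$. There I would first try the version for multipliers of modulus at most $1$ proven via convexity in $\ell^{s}$-type spaces, as done in \cite{IMRN}. If that fails, I would instead pass through the lattice structure of $X$, using pointwise domination inside the integrals defining the tent norm.
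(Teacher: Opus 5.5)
Your chain (iv)$\Rightarrow$(iii)$\Rightarrow$(i)$\Rightarrow$(ii)$\Rightarrow$(iv) matches the paper's. The first three steps are the same. For (ii)$\Rightarrow$(iv) and the equivalence of moments, the paper simply invokes the $s$-Banach Fernique--Kahane principle \cite[Lemmas 9 and 11]{IMRN}. Your L\'evy/Hoffmann-J\o rgensen sketch and Fernique's rotation argument are reasonable reconstructions of it; note that L\'evy's inequality loses a factor $2^{1/s-1}$ in the level, because its usual proof uses convexity.

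The genuine gap is in the last assertion, that the Bernoulli, Steinhaus and Gaussian randomizations are interchangeable. The contraction principle itself is not the problem: for multipliers in $[-1,1]$ it holds in any $s$-normed space. To see this, write $\lambda_n=\sum_{k\ge1}2^{-k}\delta_{n,k}$ with $\delta_{n,k}=\pm1$ and use subadditivity of $\|\cdot\|^s$. But contraction can at best show that Gaussian averages dominate Rademacher or Steinhaus ones. It can never give $\mathbb{E}\|\sum_n g_nx_n\|^s\lesssim\mathbb{E}\|\sum_n\varepsilon_nx_n\|^s$, since the multipliers $|g_n|$ are unbounded and $\mathbb{E}|g_n|\asymp1$ only helps in the Jensen direction. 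That inequality is false in general, even in Banach spaces. In $c_0$ the series $\sum_n\varepsilon_ne_n/\sqrt{\log(n+2)}$ converges surely, whereas $\sum_n g_ne_n/\sqrt{\log(n+2)}$ almost surely does not converge, because $\limsup_n|g_n|/\sqrt{\log n}=\sqrt2$. So some structural (cotype-type) property of $\mathcal{A}\mathcal{T}^p_{q,\alpha}$ must enter.

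The same defect affects your first suggestion for the Gaussian case of (ii)$\Rightarrow$(iv). After conditioning on $|X_n|$ you face Rademacher sums with unbounded coefficients $|X_n|a_n$ and realization-dependent constants. Only the Fernique route survives there.

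The paper supplies the missing input by a direct computation that exploits the lattice structure of the tent norm. It uses the radial description (Lemma \ref{tent-radial-description}), Fubini, and the scalar Khintchine--Kahane inequality at each point $r\xi$. It then applies Minkowski's integral inequality in one direction and Jensen's inequality in the other; which is which depends on whether $q\le p$ or $q>p$ (see the proof of Theorem \ref{symbol theorem}). The result is
\[
\mathbb{E}\,\|\mathcal{R}f\|_{\mathcal{A}\mathcal{T}^p_{q,\alpha}}^p\asymp\Bigl(\int_0^1(1-r)^{\alpha+1}\Bigl(\sum_{n=0}^\infty|a_n|^2r^{2n}\Bigr)^{q/2}dr\Bigr)^{p/q}
\]
for each of the three randomizations. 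Only the Khintchine--Kahane constants change, since $\mathbb{E}|\mathcal{R}f(r\xi)|^2=\sum_n|a_n|^2r^{2n}$ in all three cases. Together with the equivalence of moments, this shows that (i)--(iv) hold for one randomization if and only if they hold for the others. Your fallback of passing through the lattice structure points at exactly this, but you reserved it for a failure of the quasi-norm contraction principle. In fact it is the step that is genuinely needed, and it should be carried out.
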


\begin{proof}
The implications \((\romannumeral4)\Rightarrow(\romannumeral3)\Rightarrow(\romannumeral1)\) are immediate, while \((\romannumeral1)\Rightarrow(\romannumeral2)\) follows from Fact~3.
The implication \((\romannumeral2)\Rightarrow(\romannumeral4)\), as well as the equivalence of the moments \(\bigl(\mathbb E\|\mathcal Rf\|_{\mathcal{A}\mathcal{T}_{q,\alpha}^{p}}^{\,t}\bigr)^{1/t}\), is exactly the \(p\)-Banach Fernique--Kahane principle from \cite[Lemmas 9 and 11]{IMRN}.
By Lemma \ref{tent-radial-description}, we have
\begin{align}
E(||\mathcal{R}f||_{\mathcal{A}\mathcal{T}_{q,\alpha}^{p}}^{p})
	&\asymp\int_{\Omega}\int_{\mathbb{T}}\left(\int_{0}^{1}
	|\mathcal{R}f(r\xi)|^{q}(1-r)^{\alpha+1}dr
	\right) ^{\frac{p}{q}}|d\xi| d\mathbb{P} \label{expection}\\ \nonumber
	&\asymp\int_{\mathbb{T}}\left(
	\int_{0}^{1}\left(
	\int_{\Omega}|\mathcal{R}f(r\xi)|^{2}d\mathbb{P}
	\right)^{\frac{q}{2}}
	(1-r)^{\alpha+1}dr
	\right) ^{\frac{p}{q}}|d\xi|\\
	&\asymp\left( \int_{0}^{1}(1-r)^{\alpha+1}
	\left( \sum_{n=0}^{\infty}|a_{n}|^{2}r^{2n}
	\right) ^{\frac{q}{2}}dr
	\right) ^{\frac{p}{q}}. \label{2-trans}
\end{align}
Here $\asymp$ follows from the proof of Theorem \ref{symbol theorem}.
The combination of $(\ref{expection})$ and $(\ref{2-trans})$ implies that, under these three randomization methods,
$E(||\mathcal{R}f||_{\mathcal{A}\mathcal{T}_{q,\alpha}^{p}}^{p})$ only differs by the implict constant in the Khintchine-Kahane inequality (Lemma 11 in \cite{IMRN}).
\end{proof}

	\section{Proof of Theorem \ref{main thm}}\label{proof of bound sec}

\begin{proof}[Proof of Theorem \ref{main thm}] By Lemma \ref{invert} and Lemma \ref{equavilence}, it suffices to consider $t\in \mathbb{R}$ and $\mathfrak{I}_t^\mathrm{RL}$.

	\medno ($\romannumeral1$) If $p<v$, then we consider, 
	for $\gamma<\frac{1}{p}+\frac{\alpha+2}{q},$    the function $$f_{\gamma}(z)=\frac{1}{(1-z)^{\gamma}}.$$
	By Lemma \ref{t function}, we have $f_{\gamma}\in \mathcal{A}\mathcal{T}_{q,\alpha}^{p}.$ By Lemma \ref{t func trans}, we have  $$\big|\mathfrak{I}_{t}^{\mathrm{RL}}f_{\gamma}(z)\big|
	\asymp \Big|\frac{1}{(1-z)^{\gamma-t}}\Big|.$$
	So, together with Lemma \ref{t function}, we   have $$
	\gamma-t<\frac{1}{v}+\frac{\beta+2}{u}.$$
	It follows that the condition $$
	t\geq \frac{\alpha+2}{q}-\frac{\beta+2}{u}+\frac{1}{p}-\frac{1}{v}$$
	is necessary.

	\medno We next turn to the proof of the sufficiency.
	By Lemma \ref{equavilence}, Lemma \ref{tent frac trans} and Lemma \ref{Tent discrete}, we have for $f \in \mathcal{A}\mathcal{T}_{q, \alpha}^p$,
	\begin{align}\label{ac}
		\nonumber	&\int_\mathbb{T} \left(\int_{\Gamma(\xi)}\left|\mathfrak{I}_t f(z)\right|^u(1-|z|)^\beta d A(z)\right)^{\frac{v}{u}}|d \xi|\\
		\nonumber		& \qquad \asymp \int_\mathbb{T}\left(\int_{\Gamma(\xi)}|f(z)|^u(1-|z|)^{\beta+t u} d A(z)\right)^{\frac{v}{u}}|d \xi| \\
		& \qquad \asymp \int_\mathbb{T}\left(\sum_{a_k \in \Gamma(\xi )}\left|f\left(a_k\right)\right|^u\left(1-\left|a_k\right|\right)^{\beta+t u+2}\right)^{\frac{v}{u}}|d \xi|. 
	\end{align}
	Assuming that  $b>\max \left\{\frac{1}{u}, \frac{1}{v}\right\}$, we have
	$$
	\begin{aligned}
		(\ref{ac}) & =\int_\mathbb{T}\left(\sum_{a_k \in \Gamma(\xi) }\left(\left|f\left(a_k\right)\right|^{\frac{1}{b}}\left(1-\left|a_k\right|\right)^{\frac{\beta+t u+2}{b u}}\right)^{b u}\right)^{\frac{b v}{b u}}|d \xi| \\
		& \asymp\left\|\left\{\left|f\left(a_k\right)\right|^{\frac{1}{b}}\left(1-\left|a_k\right|\right)^{\frac{\beta+t u+2}{b u}}\right\}\right\|_{T_{b u}^{b v}(Z)}^{b v}.
	\end{aligned}
	$$
	By Lemma \ref{Tent dual p>1}, we have
	$$\hspace{0.2in} 
	\begin{aligned}
		& \left\|\left\{\left|f\left(a_k\right)\right|^{\frac{1}{b}}\left(1-\left|a_k\right|\right)^{\frac{\beta+t u+2}{b u}}\right\}\right\|_{T_{b u}^{b v}(Z)} \\
		& \qquad =\sup _{\substack{\lambda_k \geq 0 \\
				\left\|\left\{\lambda_k\right\}\right\|_{T_{(b u)^{\prime}}^{(bv)^\prime}(Z)\leq 1}} }  \sum_k \lambda_k\left|f\left(a_k\right)\right|^{\frac{1}{b}}\left(1-\left|a_k\right|\right)^{\frac{\alpha+2}{b q}}\left(1-\left|a_k\right|\right)^{\frac{\beta+t u+2}{b u}+1-\frac{\alpha+2}{b q}}.
	\end{aligned}
	$$
	Therefore, by Lemma \ref{Tent discrete} and Lemma \ref{Tent fraction}, we have 
	$$
	\left\{\lambda_k\left|f\left(a_k\right)\right|^{\frac{1}{b}}\left(1-\left|a_k\right|\right)^{\frac{\alpha+2}{b q}}\right\} \in T_{(b u)^{\prime}}^{(b v)^\prime}(Z) \cdot T_{b q}^{b p}(Z)=T_{\frac{q b u}{u+q(bu-1)}}^{\frac{pbv}{v+p(bv-1)}}(Z).
	$$
	Here the reader  should observe that
	$$
	\left\|\left\{\left|f\left(a_k\right)\right|\left(1-\left|a_k\right|\right)^{\frac{\alpha+2}{q}}\right\}\right\|_{T_q^p(Z)}=\left\|\left\{\left|f\left(a_k\right)\right|^{\frac{1}{b}}\left(1-\left|a_k\right|\right)^{\frac{\alpha+2}{b q}}\right\}\right\|_{T_{b q}^{b p}(Z)}^b.
	$$
	Let $$s=\frac{p b v}{v+p(b v-1)}$$ and $$r=\frac{q b u}{u+q(b u-1)}.$$ Note that, for $p<v$, we have $0<s<1$. Then by Lemma \ref{duality norm}, for $$t\geq\ \frac{1}{p}-\frac{1}{v}+\frac{\alpha+2}{q}-\frac{\beta+2}{u},$$
	$$\hspace{0.2in}
	\begin{aligned}
		& \sum_k \lambda_k\left|f\left(a_k\right)\right|^{\frac{1}{b}}\left(1-\left|a_k\right|\right)^{\frac{\beta+tu+2}{b u}+1}\\
		& \hspace{0.4in} \lesssim\sup _k\left(1-\left|a_k\right|\right)^{\frac{1}{b}\left(t-\frac{1}{p}+\frac{1}{v}-\frac{\alpha+2}{q}+\frac{\beta+2}{u}\right)}\left\|\left\{\lambda_k\left|f\left(a_k\right)\right|^{\frac{1}{b}}\left(1-|a_k|\right)^{\frac{\alpha+2}{b q}}\right\}\right\|_{T_r^s(Z)} \\
		& \hspace{0.4in} \lesssim\left\|\left\{\lambda_k\right\}\right\|_{T_{(b u)^{\prime}}^{(b v)^\prime}}\left\|\left\{\left|f\left(a_k\right)\right|^{\frac{1}{b}}\left(1-\left|a_k\right|\right)^{\frac{\alpha+2}{b q}}\right\}\right\|_{T_{b q}^{b p}(Z)} \\
		&\hspace{0.4in}  \lesssim \left\| \left\{|f\left(a_k\right)| \left(1-\left|a_k\right|\right)^{\frac{\alpha+2}{q}}\right\} \right\|_{T_q^p(Z)}^{1/b} \\
		&\hspace{0.4in}  \asymp \|f\|_{\mathcal{A}\mathcal{T}_{q, \alpha}^p}^{1/b}.
	\end{aligned}
	$$
	The proof of the sufficiency part of ($\romannumeral1$) is complete now.
	
	\medno ($\romannumeral2$)
	If $p\geq v,$ then we consider the function, for $\gamma<-\frac{1}{q},$ $$
	f(z)=\sum_{n=1}^{\infty}n^{\gamma}2^{n\frac{\alpha+2}{q}}z^{2^{n}-1}.$$
	By Lemma \ref{tent lacunary}, we have $f\in \mathcal{A}\mathcal{T}_{q,\alpha}^{p}.$ In order to have $\mathfrak{I}_{t}f\in \mathcal{A}\mathcal{T}_{u,\beta}^{v},$ we need
	$$\{2^{-nt}
	\cdot 2^{n\frac{\alpha+2}{q}}n^{\gamma}
	\cdot 2^{-n\frac{\beta+2}{u}}  \}\in\ell^{u}.$$
	It follows that 
	$$
	\begin{cases}
		t>\frac{\alpha+2}{q}-\frac{\beta+2}{u}; \text{or} \\		
		t=\frac{\alpha+2}{q}-\frac{\beta+2}{u}\  \text{and} \ q\leq u
	\end{cases}$$
	is a necessary condition.
	
	\medno		Now, for the sufficiency part, by Lemma \ref{tent frac trans}, we consider the following:
	$$\hspace{0.2in}
	\begin{aligned}
		\qquad 	&  \int_{\Gamma(\xi)}|f(z)|^u(1-|z|)^{\beta+u t} d A(z) \\
		&	\qquad  \qquad \lesssim \sum_{a_k \in \widetilde{\Gamma}(\xi)}\left(1-\left|a_k\right|\right)^{\beta+u t} \int_{D\left(a_k, r\right)}|f(z)|^u d A(z) \\
		&	\qquad \qquad \lesssim \sum_{a_k \in \widetilde{\Gamma}(\xi)}\left(1-|a_k |\right)^{\beta+u t} \int_{D\left(a_k, r\right)}\left(\int_{D(z, r)}|f(w)|^q d A(w)\right)^{\frac{u}{q}}(1-|z|)^{-\frac{2 u}{q}} d A(z) \\
		&	\qquad  \qquad \lesssim \sum_{a_k \in \widetilde{\Gamma}(\xi)}\left(1-\left|a_k\right|\right)^{\beta+u t-\frac{2 u}{q}+2}\left(\int_{D\left(a_k, 2 r\right)}|f(z)|^q d A(z)\right)^{\frac{u}{q}}.
	\end{aligned}
	$$
	Since $q\leq u$, one has
	$$
	\begin{aligned}
		& \sum_{a_k \in \widetilde{\Gamma}(\xi)}\left(1-\left|a_k\right|\right)^{tq+\beta \frac{q}{u}-2+2 \frac{q}{u}} \int_{D\left(a_k, 2 r\right)}|f(z)|^q d A(z) \\
		&	\qquad  \qquad \lesssim \int_{\widetilde{\widetilde{\Gamma}}(\xi)}|f(z)|^q(1-|z|)^\alpha d A(z).
	\end{aligned}
	$$
	It follows that 
	$$
	\begin{aligned}
		&\int_\mathbb{T} \left(\int_{\Gamma(\xi)}|\mathfrak{I}_t f(z)|^u(1-|z|)^{\beta} d A(z)\right)^{\frac{v}{u}}|d \xi| \\
		&	\qquad  \qquad \lesssim \int_\mathbb{T}\left(\int_{\widetilde{\widetilde{\Gamma}}(\xi)}|f(z)|^q(1-|z|)^\alpha d A(z)\right)^{\frac{v}{q}}|d \xi| \\
		& 	\qquad \qquad \lesssim \int_\mathbb{T}\left(\int_{\Gamma(\xi)}|f(z)|^q(1-|z|)^\alpha d A(z)\right)^{\frac{p}{q}}|d \xi|.
	\end{aligned}
	$$	
	This proof is complete now.
\end{proof}

\begin{proof}[Proof of Theorem \ref{mainthm3}]
	Observe that  $\mathfrak{I}_t^{\mathrm{F}}$ can be factored as the composition of the following:
	\begin{align}\label{1}
		\mathfrak{I}_{-s}^{\mathrm{F}}: \mathcal{F}_{q, \alpha}^p \rightarrow \mathcal{A}\mathcal{T}_{q, q(s-\alpha)-2}^p;
	\end{align}
	\begin{align}\label{2}
		\mathfrak{I}_{t}^{\mathrm{F}}: \mathcal{A}\mathcal{T}_{q, q(s-\alpha)-2}^p \rightarrow \mathcal{A}\mathcal{T}_{u, u(s-\beta)-2}^v;
	\end{align}
	and
	\begin{align}\label{3}
		\mathfrak{I}_{s}^{\mathrm{F}}: \mathcal{A}\mathcal{T}_{u, u(s-\beta)-2}^v \rightarrow \mathcal{F}_{u, \beta}^v.
	\end{align}
	Observe  that   (\ref{1}) and (\ref{3})
	are bounded and invertible. So (\ref{Triebel bound formula}) is
	bounded (resp. compact) if and only if
	(\ref{2}) is bounded (resp. compact). By Theorem \ref{main thm} and Lemma \ref{equavilence}, the desired result follows and the proof is complete now.
\end{proof}

\section{Proof of Theorem \ref{mainthm2}}\label{proof of compact sec}
\begin{proof}[Proof of Theorem \ref{mainthm2}.]
	\noindent By Lemma \ref{invert} and Lemma \ref{equavilence}, it suffices to prove this theorem for $t\in \mathbb{R}$. For  ($\romannumeral1$), we first consider the function $$f_k(z)=\frac{\left(1-\left|a_k\right|^2\right)^\theta}{\left(1-z \bar{a_k}\right)^{\theta+\frac{1}{p}+\frac{\alpha+2}{q}}},$$ where
	$\theta$ is large enough and $|a_k| \rightarrow 1$. Note that $\left\|f_k\right\|_{\mathcal{A}\mathcal{T}_{q,\alpha}^p}\asymp 1$. Assume
	$$
	t \geq \frac{1}{p}-\frac{1}{v}+\frac{\alpha+2}{q}-\frac{\beta+2}{u}.
	$$ Then $$
	\begin{aligned}	\left\|\mathfrak{I}_t f_k(z)\right\|_{\mathcal{A}\mathcal{T}_{u, \beta}^v}^v & \asymp \int_{\mathbb{T}}\left(\int_0^1\left|\mathfrak{I}_t f_k(r \xi)\right|^u(1-r)^{\beta+1} d r\right)^{\frac{v}{u}}|d \xi| \\
		& \asymp\left(1-\left|a_k\right|\right)^{v\left(t-\frac{1}{p}+\frac{1}{v}-\frac{\alpha+2}{q}+\frac{\beta+2}{u}\right)}.
	\end{aligned}
	$$
	It follows that $$t>\frac{1}{p}-\frac{1}{v}+\frac{\alpha+2}{q}-\frac{\beta+2}{u}$$ is a necessary condition.
	For the sufficiency part, let $\left\{f_k\right\}$ be bounded in $\mathcal{A}\mathcal{T}_{q, \alpha}^p$ and converge to zero uniformly on compact subsets of the unit disk.
	For arbitrary $\varepsilon>0$, take $0<s<1$ such that $$1-s<\varepsilon.$$ Then there is an integer $k_0>0$ such that for $k \geq k_0$, $$\sup _{|z|\leq s}\left|f_k(z)\right|<\varepsilon.$$ 
	Let
	$$
	c=\frac{1}{p}+\frac{\alpha+2}{q}-\frac{1}{v}-\frac{\beta+2}{u}.
	$$ By  Lemma \ref{tent frac trans},  Lemma \ref{tent-radial-description} and Corollary \ref{tent embedding}, we have 
	\begin{align}
		\nonumber		\qquad &\left(\int_\mathbb{T} \left(\int_{\Gamma(\xi)}\left|\mathfrak{I}_t f_k(z)\right|^u(1-|z|)^\beta d A(z)\right)^{\frac{v}{u}}|d \xi|\right)^\frac{1}{v}\\
			& \qquad \qquad \asymp\left(\int_\mathbb{T}\left(\int_{\Gamma(\xi)}|f_k(z)|^u(1-|z|)^{\beta+t u} d A(z)\right)^{\frac{v}{u}}|d \xi|\right)^\frac{1}{v} \\
			\label{compact formula}		& \qquad \qquad \lesssim \left(\int_\mathbb{T}\left(\int_0^s\left|f_k(r \xi)\right|^u(1-r)^{\beta+t u+1} d r\right)^{\frac{v}{u}}|d \xi|\right)^{\frac{1}{v}} \\
		& \nonumber\qquad \qquad \qquad +\left(\int_\mathbb{T}\left(\int_s^1\left|f_k(r \xi)\right|^u(1-r)^{\beta+t u+1} d r\right)^{\frac{v}{u}}|d \xi|\right)^{\frac{1}{v}} \\
			&  \qquad\qquad \lesssim \varepsilon+\left(\int_\mathbb{T}\left(\int_s^1\left|f_k(r \xi)\right|^u(1-r)^{\beta+c u+1}(1-r)^{(t-c) u} d r\right)^{\frac{v}{u}}|d \xi|\right)^{\frac{1}{v}} \\
				& \qquad \qquad \lesssim \varepsilon+\varepsilon^{(t-c) u}\left\|f_k\right\|_{\mathcal{A}\mathcal{T}_{q, \alpha}^p}.
	\end{align}
	
	\medno ($\romannumeral2$) For the necessity, we consider the function $$f_n(z)=n^{\frac{\alpha+2}{q}} z^n$$ and note that $\left\|f_n\right\|_{\mathcal{A}\mathcal{T}_{q, \alpha}^p} \asymp 1.$ Then $$\left\|\mathfrak{I}_t f_n(z)\right\|_{\mathcal{A}\mathcal{T}_{u, \beta}^v}\asymp n^{-t+\frac{\alpha+2}{q}-\frac{\beta+2}{u}}.$$ It follows that $$t>\frac{\alpha+2}{q}-\frac{\beta+2}{u}$$ is a necessary condition.
	For the sufficiency, it suffices to consider the quantity in \eqref{compact formula}. Choose $e$ such that $$t>e>\frac{\alpha+2}{q}-\frac{\beta+2}{u}.$$ With a similar argument, by Corollary \ref{tent embedding}, one has
	$$
	\begin{aligned}
		(\ref{compact formula})	&\lesssim \varepsilon+\varepsilon^{(t-e) u}\left(\int_\mathbb{T}\left(\int_s^1\left|f_k(r \xi)\right|^u(1-r)^{\beta+e u+1} d r\right)^{\frac{v}{u}}|d \xi|\right)^{\frac{1}{v}}\\
		&\lesssim \varepsilon+\varepsilon^{(t-e) u}\left\| f_k\right\|_{\mathcal{A}\mathcal{T}_{q, \alpha}^p}.
	\end{aligned}
	$$
	The proof of Theorem \ref{mainthm2} is complete now.
\end{proof}

\medno The proof of Theorem \ref{mainthm4} is similar to that of Theorem \ref{mainthm3}, hence skipped.

\section{Proof of Theorem \ref{symbol theorem}}\label{sec:proof-thm-sym}
	
	\begin{proof}[Proof of Theorem \ref{symbol theorem}]
		We first consider the case $q\leq p.$ By Lemma \ref{tent frac trans}, the Khintchine-Kahane inequality  and Minkowski's inequality, one has
		\begin{align*}
			E(||\mathcal{R}f||_{\mathcal{A}\mathcal{T}_{q,\alpha}^{p}}^{p})
			&\asymp\int_{\Omega}\left( \int_{\mathbb{T}}
			\left( \int_{\Gamma(\xi)}
			|\mathcal{R}f(z)|^{q}(1-|z|)^{\alpha}dA(z)
			\right) ^{\frac{p}{q}}|d\xi|
			\right) d\mathbb{P}\\
			&\asymp\int_{\Omega}\left( 
			\int_{\mathbb{T}}\left( 
			\int_{0}^{1}|\mathcal{R}f(r\xi)|^{q}
			(1-|z|)^{\alpha+1}dr
			\right) ^{\frac{p}{q}}
			|d\xi|\right) d\mathbb{P}\\
			&\leq
			\int_{\mathbb{T}}\left( 
			\int_{0}^{1}\left( 
			\int_{\Omega}|\mathcal{R}f(r\xi)|^{p}d\mathbb{P}
			\right) ^{\frac{q}{p}}
			(1-r)^{\alpha+1}dr
			\right) ^{\frac{p}{q}}|d\xi|\\
			&\asymp
				\int_{\mathbb{T}}\left( 
				\int_{0}^{1}\left(\int_{\Omega} 
				|\mathcal{R}f(r\xi)|^{2}d\mathbb{P}
				\right) ^{\frac{q}{2}}
				(1-r)^{\alpha+1}dr
				\right) ^{\frac{p}{q}}|d\xi|
			\\
			&\asymp
			\left( \int_{0}^{1}M_{2}^{q}(r,f)(1-r)^{\alpha+1}dr
			\right) ^{\frac{p}{q}}.
		\end{align*} 
		It follows that $H(2,q,\frac{\alpha+2}{q})\subseteq (\mathcal{A}\mathcal{T}_{q,\alpha}^{p})_{\star}.$

		\medno 
		For the other direction, by   Lemma \ref{tent frac trans} and the Khintchine-Kahane inequality again,
		\begin{align*}
			E(||\mathcal{R}f||_{\mathcal{A}\mathcal{T}_{q,\alpha}^{p}}^{p})
			&\geq\int_{\mathbb{T}}
			\left( \int_{\Omega}
			\left(\int_{0}^{1}
			|\mathcal{R}f(r\xi)|^{q}
			(1-r)^{\alpha+1}dr
			\right) 
			d\mathbb{P}\right)^{\frac{p}{q}}|d\xi|\\
			&\asymp \int_{\mathbb{T}}\left(
			\int_{0}^{1}\left(\int_{\Omega}
			|\mathcal{R}f(r\xi)|^{2}d\mathbb{P}
			\right)^{\frac{q}{2}}
			(1-r)^{\alpha+1}dr
			\right)^{\frac{p}{q}}|d\xi|\\
			&\asymp \left( 
			\int_{0}^{1}M_{2}^{q}(r,f)(1-r)^{\alpha+1}dr
			\right) ^{\frac{p}{q}}.
		\end{align*} 
		It follows that $(\mathcal{A}\mathcal{T}_{q,\alpha}^{p})_{\star}\subseteq H(2,q,\frac{\alpha+2}{q}).$
		The case $q>p$ follows from similar arguments, hence skipped. 
	\end{proof}
	
\section{Proof of Theorem \ref{embedding theorem}}\label{sec:proof-thm-emdeding}
	\noindent 	We shall need two facts on the embedding between mixed norm spaces, respectively. 
	\begin{lemma}[\cite{Arevalo2015}]\label{mix embedding}
		Let $0<\alpha_1, \alpha_2<\infty$ and $0<p_1, p_2, q_1, q_2 \leq \infty$.
		\begin{itemize}
			\item[($\romannumeral1$)]  If $p_1 \geq p_2$, then $H\left(p_1, q_1, \alpha_1\right) \subseteq H\left(p_2, q_2, \alpha_2\right)$ if and only if
			$$
			\left\{\begin{array}{l}
				\alpha_1<\alpha_2 ; \text { or } \\
				\alpha_1=\alpha_2 \text { and } q_1 \leq q_2 .
			\end{array}\right.
			$$
			
			\item[($\romannumeral2$)] If $p_1<p_2$, then $H\left(p_1, q_1, \alpha_1\right) \subseteq H\left(p_2, q_2, \alpha_2\right)$ if and only if
			$$
			\left\{\begin{array}{l}
				\alpha_1+1 / p_1<\alpha_2+1 / p_2 ; \text { or } \\
				\alpha_1+1 / p_1=\alpha_2+1 / p_2 \text { and } q_1 \leq q_2 .
			\end{array}\right.
			$$
		\end{itemize}
	\end{lemma}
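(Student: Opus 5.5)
Since this lemma is quoted from \cite{Arevalo2015}, the plan is to reconstruct its proof in two halves. Sufficiency will come from pointwise growth estimates for integral means. Necessity will come from the two families of test functions already recorded in Lemma \ref{mix lacunary} and Lemma \ref{Fbc mix}.

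\emph{Necessity.} For part ($\romannumeral1$) ($p_1\ge p_2$), I would use the lacunary series \eqref{lacunary formula}. By Lemma \ref{mix lacunary}, membership in $H(p_i,q_i,\alpha_i)$ is equivalent to $\{2^{-n\alpha_i}a_n\}\in\ell^{q_i}$, independently of $p_i$. So the embedding forces $\ell^{q_1}$ weighted by $2^{-n\alpha_1}$ to sit inside $\ell^{q_2}$ weighted by $2^{-n\alpha_2}$. Choosing $a_n=2^{n\alpha_1}n^{\gamma}$ with $\gamma$ slightly below $-1/q_1$ excludes both $\alpha_1>\alpha_2$ and the case $\alpha_1=\alpha_2$, $q_1>q_2$. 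For part ($\romannumeral2$) ($p_1<p_2$), I would use $f_{b,c}$ and Lemma \ref{Fbc mix}.
\begin{itemize}
\item Taking $b<\alpha_1+1/p_1$ arbitrarily close to $\alpha_1+1/p_1$ forces $\alpha_1+1/p_1\le\alpha_2+1/p_2$.
\item In the equality case, if $q_1>q_2$, take $b=\alpha_1+1/p_1$ and $1/q_1<c\le 1/q_2$. Then $f_{b,c}\in H(p_1,q_1,\alpha_1)$ but $f_{b,c}\notin H(p_2,q_2,\alpha_2)$, a contradiction.
\end{itemize}

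\emph{Sufficiency.} Everything rests on two standard facts. First, $M_p(r,f)$ is increasing in $r$, which gives the pointwise bound $M_{p}(r,f)\lesssim (1-r)^{-\alpha}\|f\|_{H(p,q,\alpha)}$. To see this, integrate $M_p^q$ over $(r,(1+r)/2)$, or over $(r,1)$ when $\alpha q<1$. Second, the Hardy--Littlewood inequality $M_{p_2}(r,f)\lesssim (1-r)^{1/p_2-1/p_1}M_{p_1}(\tfrac{1+r}{2},f)$ for $p_1<p_2$. With these, the argument runs as follows.
\begin{itemize}
\item When $\alpha_1<\alpha_2$, the weight gain absorbs the pointwise bound. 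One writes $M^{q_2}=M^{q_1}M^{q_2-q_1}$ if $q_2>q_1$, and uses H\"older's inequality in $r$ if $q_2<q_1$.
\item When $\alpha_1=\alpha_2$ and $q_1\le q_2$, the same splitting $M^{q_2}=M^{q_1}M^{q_2-q_1}$ together with the pointwise bound gives $H(p,q_1,\alpha)\subseteq H(p,q_2,\alpha)$.
\item Monotonicity in $p$ of the normalized means handles $p_1\ge p_2$.
\item When $p_1<p_2$, the Hardy--Littlewood inequality, after a change of variables $\rho=(1+r)/2$, gives $H(p_1,q,\alpha_1)\subseteq H(p_2,q,\alpha_1+1/p_1-1/p_2)$. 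The previous steps then finish the argument.
\end{itemize}

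\emph{Main obstacle.} The delicate step is the case $p_1<p_2$. Here the pointwise Hardy--Littlewood estimate only gives the embedding at fixed $q$. I must check that composing it with the $q$-monotonicity step costs nothing in the exponent at the critical line $\alpha_1+1/p_1=\alpha_2+1/p_2$. If the naive substitution $\rho=(1+r)/2$ introduces losses for small $q$, I would instead use Flett's inequality in its mixed-norm form, proved through dyadic blocks $V_n*f$ and Bernstein/Nikol'skii inequalities $\|V_n*f\|_{p_2}\lesssim 2^{n(1/p_1-1/p_2)}\|V_n*f\|_{p_1}$. This reduces the embedding to the sequence-space embedding $\ell^{q_1}\subseteq\ell^{q_2}$.
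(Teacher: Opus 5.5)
Your proof is correct. Note that the paper gives no proof of this lemma: it imports it verbatim from Ar\'{e}valo \cite{Arevalo2015}, so there is no in-paper argument to compare against, and yours stands as a self-contained reconstruction. Your necessity half uses exactly the test-function lemmas the paper itself records. Lemma~\ref{mix lacunary} handles part (i), where membership depends only on $\{2^{-n\alpha}a_n\}\in\ell^{q}$ and not on $p$. Lemma~\ref{Fbc mix} handles part (ii); there the lacunary obstruction is automatically absent, since $\alpha_1+1/p_1\le\alpha_2+1/p_2$ with $p_1<p_2$ forces $\alpha_1<\alpha_2$. Your sufficiency half is the standard chain, and each step checks out. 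The chain consists of monotonicity of $M_p$ in $p$, the growth bound $M_p(r,f)\lesssim(1-r)^{-\alpha}\|f\|_{H(p,q,\alpha)}$, the splitting $M^{q_2}=M^{q_1}M^{q_2-q_1}$ or H\"older in $r$, and the Hardy--Littlewood estimate.

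The ``main obstacle'' you flag does not exist. With $\rho=(1+r)/2$ one has $1-r=2(1-\rho)$. So if $\alpha_2=\alpha_1+1/p_1-1/p_2$, then
\[
\int_0^1 M_{p_2}^{q}(r,f)(1-r)^{\alpha_2 q-1}\,dr\lesssim\int_0^1 M_{p_1}^{q}\Bigl(\tfrac{1+r}{2},f\Bigr)(1-r)^{\alpha_1 q-1}\,dr\asymp\int_{1/2}^1 M_{p_1}^{q}(\rho,f)(1-\rho)^{\alpha_1 q-1}\,d\rho .
\]
This holds with no loss for every $q\in(0,\infty]$; for $q=\infty$, take suprema instead. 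Composing it with your part-(i) sufficiency at the fixed exponent $p_2$ gives precisely the region in (ii), critical line included. So the fallback through Flett's inequality and dyadic blocks $V_n*f$ is unnecessary, and you should state the substitution as settled rather than as something still to be checked.

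One thing you should add is a remark on endpoints. Lemmas~\ref{mix lacunary} and \ref{Fbc mix} are recorded in the paper only for finite $p,q$, whereas the statement allows $p_i,q_i=\infty$. The extensions are routine but should be said:
\begin{itemize}
\item For $p=\infty$ and the lacunary test, use $M_\infty(r,f)\le\sum|a_n|r^{2^n-1}$ with the Mateljevi\'{c}--Pavlovi\'{c} estimate for positive coefficients, and $M_\infty\ge M_2$ for the non-membership direction.
\item For $p=\infty$ and $f_{b,c}$, use $M_\infty(r,f_{b,c})\asymp(1-r)^{-b}(\log\frac{e}{1-r})^{-c}$.
\item For $q=\infty$, the membership conditions become $\sup_n 2^{-n\alpha}|a_n|<\infty$ for lacunary series, and $b<\alpha+1/p$, or $b=\alpha+1/p$ with $c\ge 0$, for $f_{b,c}$.
\end{itemize}
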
	
	
	\medno For part ($\romannumeral1$) of Theorem  \ref{embedding theorem}, we treat first  the necessity. If $p<u,$ then we consider the function (\ref{Fbc}) for $$\frac{\alpha+2}{q}+\frac{1}{p} \qquad \text{and} \qquad c>\frac{1}{p}.$$ By Lemma \ref{Fbc tent}, we have $f \in \mathcal{A}\mathcal{T}_{q,\alpha}^p$. Then by Lemma \ref{Fbc mix}, we  have necessarily $$
	\begin{cases}
		\frac{1}{p}-\frac{1}{u}+\frac{\alpha+2}{q}-\beta=0,
		\	p\leq v; \ or\\
		\frac{1}{p}-\frac{1}{u}+\frac{\alpha+2}{q}-\beta<0.
	\end{cases}$$
	
	\medno To proceed with the sufficiency part of the proof, we divide the proof into two cases.
	
	\bignobf{Case 1:} $\frac{1}{p}-\frac{1}{u}+\frac{\alpha+2}{q}-\beta=0$
	and	$p\leq v$.
	
	\medno	Here we will employ an extrapolation trick, in combination with fractional integration. We first assume $q=2k$. Note that for $f\in H(\mathbb{D})$ $(k\geq 1)$,
	$$
	f\in \mathcal{A}\mathcal{T}_{q,\alpha}^{p}\Leftrightarrow
	f^{k}\in \mathcal{A}\mathcal{T}_{2,\alpha}^{\frac{2p}{q}},$$
	and $$
	f\in H(u,v,\beta)\Leftrightarrow
	f^{k}\in H(\frac{2u}{q},\frac{2v}{q},\frac{q\beta}{2}).$$
	Next, we prove $$\mathcal{A}\mathcal{T}_{2,\alpha}^{\frac{2p}{q}}\subseteq H(\frac{2u}{q},\frac{2v}{q},\frac{q\beta}{2}).$$ By Lemma \ref{Fractional Calderon}, $$\mathfrak{I}_{-\frac{\alpha+2}{2}}^{\mathrm{F}}f\in T_{2,\alpha}^{\frac{2p}{q}}
	\Leftrightarrow f\in H^{\frac{2p}{q}}.$$
	By Theorem 6 in \cite{Guo2}, $$\mathfrak{I}_{-\frac{\alpha+2}{2}}^\mathrm{F}:H^{\frac{2p}{q}}\to
	H(\frac{2u}{q},\frac{2v}{q},\frac{q\beta}{2})$$ is bounded. 
	Then we conclude that for $q=2k$, $$\mathcal{A}\mathcal{T}_{q,\alpha}^{p}\subseteq H(u,v,\beta).$$ For $q<2k$, if we choose $\alpha_k$ such that $$\frac{\alpha+2}{q}=\frac{\alpha_k+2}{2k},$$ then by Corollary \ref{tent embedding}, we have $$
\mathcal{A}\mathcal{T}_{q,\alpha}^{p}\subseteq \mathcal{A}\mathcal{T}_{2k,\alpha_k}^{p}. 
	$$
    Since
$$
\mathcal{A}\mathcal{T}_{2k,\alpha_k}^{p}\subseteq H(u,v,\beta),
$$
it follows 
that
$$
\mathcal{A}\mathcal{T}_{q,\alpha}^{p}\subseteq H(u,v,\beta).
$$

	\bignobf{Case 2:} $\frac{1}{p}-\frac{1}{u}+\frac{\alpha+2}{q}-\beta<0.$
	
	\medno	We first choose $q^\prime>q$ and $\alpha^\prime$ such that $q^\prime>p$ and $$\frac{\alpha+2}{q}=\frac{\alpha^{\prime}+2}{q^{\prime}}.$$ By Corollary \ref{tent embedding}, we have
	$$
	T_{q, \alpha}^p \subseteq T_{q^{\prime}, \alpha^\prime}^p .
	$$
	By Minkowski's inequality and Lemma \ref{mix embedding}
	$$
	T_{q^{\prime}, \alpha^\prime}^p \subseteq H\left(p, q^{\prime}, \frac{\alpha^\prime+2}{q^\prime}\right) \subseteq H(u, v, \beta).
	$$
	
	\bigno ($\romannumeral2$)  For $p=u$, we first consider the  function (\ref{lacunary formula}). By Lemma \ref{tent lacunary} and Lemma \ref{mix lacunary}, we necessarily have $$
	\begin{cases}
		\frac{\alpha+2}{q}-\beta<0;~or\\
		\frac{\alpha+2}{q}-\beta=0,\ v\geq q.
	\end{cases}$$
	Next, we consider the function (\ref{Fbc}). By Lemma \ref{Fbc mix} and Lemma \ref{Fbc tent}, we necessarily have 
	$$
	\begin{cases}
		\frac{\alpha+2}{q}-\beta<0;~or\\
		\frac{\alpha+2}{q}-\beta=0,\ v\geq p.
	\end{cases}$$
	
	\medno For the sufficiency part, we first consider
	$$v \geq \max\{p,q\} \qquad \text{and}  \qquad \frac{\alpha+2}{q}-\beta=0.$$ If we choose $\alpha^\prime$ such that $$\frac{\alpha+2}{q}=\frac{\alpha^\prime+2}{v},$$ then by Corollary \ref{tent embedding},  
	$$ \mathcal{A}\mathcal{T}_{q,\alpha}^p \subseteq \mathcal{A}\mathcal{T}_{v,\alpha^{'}}^p.$$
	By  Minkowski's inequality and Lemma \ref{mix embedding}, one has $$\mathcal{A}\mathcal{T}_{v,\alpha^{'}}^p\subseteq H(p,v,\frac{\alpha^{'}+2}{v}).$$ With the help of part ($\romannumeral1$) of Lemma \ref{mix embedding}, the proof of the sufficiency of the  case $$\frac{\alpha+2}{q}-\beta<0$$ is similar to that of case 2 in part ($\romannumeral1$), hence skipped.

	\medno ($\romannumeral3$) If $p>u,$ then testing the function (\ref{lacunary formula}) yields a necessary condition: $$
	\begin{cases}
		\frac{\alpha+2}{q}-\beta<0;~or\\
		\frac{\alpha+2}{q}-\beta=0,\ v\geq q.
	\end{cases}$$
	
	\medno Now we derive another necessary condition which is perhaps the most roundabout part of the proof. Assume $$p>u, \ v\geq q \quad \text{and} \quad \beta=\frac{\alpha+2}{q}, \quad \text{but} \quad  v<u.$$ By  Theorem 6 in \cite{Guo2}, we conclude that 
	$$\mathfrak{I}_{-\frac{\alpha+2}{2}}: H^{\frac{2 p}{q}} \rightarrow H\left(\frac{2u}{q}, \frac{2 v}{q}, \frac{q \beta}{2}\right)$$ is  unbounded.
	By Lemma  \ref{Fractional Calderon}, we have that 
	$$\mathfrak{I}_{\frac{\alpha+2}{2}}^\mathrm{F}: \mathcal{A}\mathcal{T}_{2,\alpha}^{\frac{2p}{q}}  \rightarrow H^{\frac{2 p}{q}}$$
	is bounded and invertible.
	It follows that $$\mathcal{A}\mathcal{T}_{2, \alpha}^{\frac{2 p}{q}} \nsubseteq H\left(\frac{2 u}{q}, \frac{2 v}{q}, \frac{a \beta}{2}\right).$$ Here we first assume $q=\frac{2}{k}$. 
	Then we choose $$f \in \mathcal{A}\mathcal{T}_{2, \alpha}^{\frac{2p}{q}} \qquad \text{but} \qquad f \notin H\left(\frac{2 u}{q}, \frac{2 v}{q}, \frac{q \beta}{2}\right).$$
	Let $$g=f^k.$$ Then $$g\in \mathcal{A}\mathcal{T}_{q, \alpha}^p$$
	but $$g \notin H(u, v, \beta).$$ It follows that
	for $$q=\frac{2}{k}, \qquad k \geq 1, \qquad q \leq v<u<p$$ and  $\frac{\alpha+2}{q}=\beta$, we have
	$$
	\mathcal{A}\mathcal{T}_{q, \alpha}^p \nsubseteq H(u, v, \beta).
	$$
	Next, we consider $q>\frac{2}{k}$. If we choose $$
	\frac{k}{2}\left(\alpha_k+2\right)=\frac{\alpha+2}{q},
	$$ then by Corollary \ref{tent embedding}, 	
	$$
	\mathcal{A}\mathcal{T}_{\frac{2}{k}, \alpha_k}^p \subseteq \mathcal{A}\mathcal{T}_{q, \alpha}^p .
	$$
	But $$\mathcal{A}\mathcal{T}_{\frac{2}{k},\alpha_k }^p \nsubseteq H(u, v, \beta),$$ since $$p>u,  \qquad \frac{2}{k}<q \leq v \qquad \text{and} \qquad u>v.$$
	So $$T_{q, \alpha}^p  \nsubseteq H(u, v, \beta).$$
	In summary, we have derived the following necessary condition: 
	$$
	\begin{cases}
		\frac{\alpha+2}{q}-\beta<0;~\text{or}\\
		\frac{\alpha+2}{q}-\beta=0,\ v\geq \max\{q,u\}.
	\end{cases}$$
	
	\medno 
	Next, for the sufficiency part, we first consider $$v\geq \max \{q,u\} \qquad \text{and} \qquad \frac{\alpha+2}{q}-\beta=0.$$ If we choose $\alpha^\prime$ such that $$\frac{\alpha+2}{q}=\frac{\alpha^{\prime}+2}{v},$$ then   by Corollary \ref{tent embedding} and Minkowski's inequality, 
	$$
	\mathcal{A}\mathcal{T}_{q, \alpha}^p \subseteq \mathcal{A}\mathcal{T}_{v, \alpha^{\prime}}^p \subseteq \mathcal{A}\mathcal{T}_{v, \alpha^{\prime}}^u \subseteq H(u, v, \beta).
	$$
	With the help of part ($\romannumeral2$) of Lemma \ref{mix embedding}, the proof of the sufficiency of  the case $$\frac{\alpha+2}{q}-\beta<0$$ is similar to that of case 2 in part ($\romannumeral1$), hence skipped.
	This proof of Theorem \ref{embedding theorem} is complete now.
	
	\bignobf{Remark.} If one follows the \emph{``usual''} approach to embedding problems, then he/she can derive the necessary condition for $p>u$ in one shot if  a function $g \in H(\mathbb{D})$  such that $$ g \in \mathcal{A}\mathcal{T}_{q, \alpha}^p,\qquad \text{but} \qquad g \notin H(u, v, \beta),$$ where $$0<q \leq v<u<p<\infty \qquad \text{and} \qquad \frac{\alpha+2}{q}=\beta,$$ can be found. Such an example is still desirable to us but remains elusive to our repeated efforts. So, we offer

	\bignobf{Problem B.}  Find a nonzero function $f \in H(\mathbb{D})$ such that $$f \in \mathcal{A}\mathcal{T}_{2, \alpha}^s, \qquad \text{but} \qquad f \notin H\left(d, e, \frac{\alpha+2}{2}\right),$$ where $s>d>e \geq 2$ and $\alpha>-2$.

\providecommand{\bysame}{\leavevmode\hbox to3em{\hrulefill}\thinspace}
\providecommand{\MR}{\relax\ifhmode\unskip\space\fi MR }
\providecommand{\MRhref}[2]{%
  \href{http://www.ams.org/mathscinet-getitem?mr=#1}{#2}
}
\providecommand{\href}[2]{#2}


\begin{thebibliography}{10}

\bibitem{Aguilar2023}
T.~Aguilar-Hern\'{a}ndez and P.~Galanopoulos, \emph{Average radial
  integrability spaces, tent spaces and integration operators}, J. Math. Anal.
  Appl. \textbf{523} (2023), no.~2, Paper No. 127028, 38 pp.

\bibitem{Aguilar20231}
\bysame, \emph{Inequalities on tent spaces and closed range integration
  operators on spaces of average radial integrability}, Rev. Real Acad. Cienc.
  Exactas F{\'\i}s. Nat. Ser. A Mat. \textbf{119} (2025), Paper No. 70, 26 pp.

\bibitem{Arevalo2015}
I.~Ar\'{e}valo, \emph{A characterization of the inclusions between mixed norm
  spaces}, J. Math. Anal. Appl. \textbf{429} (2015), no.~2, 942--955.

\bibitem{Ars1999}
M.~Arsenovi\'{c}, \emph{Embedding derivatives of {${M}$}-harmonic functions
  into {$L^p$}-spaces}, Rocky Mountain J. Math. \textbf{29} (1999), no.~1,
  61--76.

\bibitem{Avetisyan2012}
K.L. Avetisyan, \emph{A note on mixed norm spaces of analytic functions}, Aust.
  J. Math. Anal. Appl. \textbf{9} (2012), no.~1, Paper No. 16, 6 pp.

\bibitem{Buck}
S.M. Buckley, P.~Koskela, and D.~Vukoti\'{c}, \emph{Fractional integration,
  differentiation, and weighted {B}ergman spaces}, Math. Proc. Cambridge
  Philos. Soc. \textbf{126} (1999), no.~2, 369--385.

\bibitem{Cinlar}
E.~Cinlar, \emph{Probability and stochastics}, Graduate Texts in Mathematics,
  vol. 261, Springer, New York, 2011.

\bibitem{Cohn2000}
W.S. Cohn and I.E. Verbitsky, \emph{Factorization of tent spaces and {H}ankel
  operators}, J. Funct. Anal. \textbf{175} (2000), no.~2, 308--329.

\bibitem{Coi1985}
R.R. Coifman, Y.~Meyer, and E.M. Stein, \emph{Some new function spaces and
  their applications to harmonic analysis}, J. Funct. Anal. \textbf{62} (1985),
  no.~2, 304--335.

\bibitem{IMRN}
X.~Fang, G.~Cheng, and C.~Liu, \emph{A {L}ittlewood-type theorem for random
  {B}ergman functions}, Int. Math. Res. Not. (2022), no.~14, 11056--11091.

\bibitem{Guo1}
X.~Fang, F.~Guo, S.~Hou, and X.~Zhu, \emph{Fractional integration on mixed norm
  spaces. {I}}, Complex Anal. Oper. Theory \textbf{18} (2024), no.~3, Paper No.
  45, 21 pp.

\bibitem{Guo3}
\bysame, \emph{Fractional {V}olterra-type operators on {H}ardy spaces}, Studia
  Math. \textbf{283} (2025), no.~1, 1--42.

\bibitem{Flett2}
T.M. Flett, \emph{Mean values of power series}, Pacific J. Math. \textbf{25}
  (1968), 463--494.

\bibitem{Flett1971}
\bysame, \emph{Temperatures, {B}essel potentials and {L}ipschitz spaces}, Proc.
  London Math. Soc. \textbf{22} (1971), 385--451.

\bibitem{Flett1}
\bysame, \emph{The dual of an inequality of {H}ardy and {L}ittlewood and some
  related inequalities}, J. Math. Anal. Appl. \textbf{38} (1972), 746--765.

\bibitem{jh1892}
J.~Hadamard, \emph{Essai sur l'{\'e}tude des fonctions donn{\'e}es par leur
  d{\'e}veloppement de taylor}, J. de Math. \textbf{8} (1892), 101--186.

\bibitem{HL}
G.H. Hardy and J.E. Littlewood, \emph{Some properties of fractional integrals.
  {II}}, Math. Z. \textbf{34} (1932), no.~1, 403--439.

\bibitem{lvanov2017}
K.~Ivanov and P.~Petrushev, \emph{Harmonic {B}esov and {T}riebel-{L}izorkin
  spaces on the ball}, J. Fourier Anal. Appl. \textbf{23} (2017), no.~5,
  1062--1096.

\bibitem{Jev1996}
M.~Jevti\'{c}, \emph{Embedding derivatives of {$ M$}-harmonic {H}ardy spaces
  {$H^p$} into {L}ebesgue spaces, {$0<p<2$}}, Rocky Mountain J. Math.
  \textbf{26} (1996), no.~1, 175--187.

\bibitem{Kahane1985}
J.P. Kahane, \emph{Some random series of functions}, second ed., Cambridge
  Studies in Advanced Mathematics, vol.~5, Cambridge University Press,
  Cambridge, 1985.

\bibitem{Kim}
H.~Kim, \emph{Derivatives of {B}laschke products}, Pacific J. Math.
  \textbf{114} (1984), no.~1, 175--190.

\bibitem{Littlewood1926}
J.~E. Littlewood, \emph{On the mean values of power series}, J. London Math.
  Soc. \textbf{25} (1926), 328--337.

\bibitem{Littlewood1930}
\bysame, \emph{On mean values of power series {II}}, J. London Math. Soc.
  \textbf{5} (1930), 179--182.

\bibitem{Lueck1991}
D.H. Luecking, \emph{Embedding derivatives of {H}ardy spaces into {L}ebesgue
  spaces}, Proc. London Math. Soc. \textbf{63} (1991), no.~3, 595--619.

\bibitem{MM1}
M.~Mateljevi\'{c} and M.~Pavlovi\'{c}, \emph{{$L^{p}$}-behavior of power series
  with positive coefficients and {H}ardy spaces}, Proc. Amer. Math. Soc.
  \textbf{87} (1983), no.~2, 309--316.

\bibitem{Wang2020}
S.~Miihkinen, J.~Pau, A.~Per\"{a}l\"{a}, and M.~Wang, \emph{Volterra type
  integration operators from {B}ergman spaces to {H}ardy spaces}, J. Funct.
  Anal. \textbf{279} (2020), no.~4, Paper No. 108564, 32 pp.

\bibitem{Oswald1983}
P.~Oswald, \emph{On {B}esov-{H}ardy-{S}obolev spaces of analytic functions in
  the unit disc}, Czechoslovak Math. J. \textbf{33} (1983), no.~3, 408--426.

\bibitem{Paley19302}
R.E.A.C. Paley and A.~Zygmund, \emph{On some series of functions, (2)}, Proc.
  Camb. Phil. Soc. \textbf{26} (1930), 458--474.

\bibitem{Pavlovic2013}
M.~Pavlovi\'{c}, \emph{On the {L}ittlewood-{P}aley {$g$}-function and
  {C}alder\'{o}n's area theorem}, Expo. Math. \textbf{31} (2013), no.~2,
  169--195.

\bibitem{pm2019}
\bysame, \emph{Function classes on the unit disc. an introduction}, 2nd ed., De
  Gruyter Studies in Mathematics, vol.~52, De Gruyter, Berlin, 2019.

\bibitem{Pelaez20152}
J.~Pel{\'a}ez and J.~R{\"a}tty{\"a}, \emph{Embedding theorems for {B}ergman
  spaces via harmonic analysis}, Math. Ann. \textbf{362} (2015), 205--239.

\bibitem{perala2018}
A.~Perälä, \emph{Duality of holomorphic {H}ardy type tent spaces},
  arXiv:1803.10584v1 (2018).

\bibitem{zhu2005}
K.~Zhu, \emph{Spaces of holomorphic functions in the unit ball}, Graduate Texts
  in Mathematics, vol. 226, Springer-Verlag, New York, 2005.

\bibitem{Guo2}
X.~Zhu, X.~Fang, F.~Guo, and S.~Hou, \emph{Fractional integration on mixed norm
  spaces. {II}}, J. Geom. Anal. \textbf{33} (2023), no.~5, Paper No. 158, 37
  pp.

\end{thebibliography}
\end{document}